\newtheorem{theorem}{Theorem}[section]
\newtheorem{lemma}[theorem]{Lemma}
\newtheorem{corollary}[theorem]{Corollary}
\newtheorem{proposition}[theorem]{Proposition}
\newtheorem*{Proposition 0}{Proposition A}
\newtheorem*{Proposition 1}{Proposition B}
\newcommand{\X}{\mathfrak{X}}
\newcommand{\Conf}{\mathrm{Conf}}
\newcommand{\Z}{\mathbb{Z}}
\newcommand{\R}{\mathbb{R}}
\newcommand{\GT}{\mathbb{J}}
\newcommand{\T}{\mathcal{T}}
\newcommand{\Prob}{\mathrm{Prob}}
\newcommand{\Pk}{J_k^{(a,b)}}
\newcommand{\Oinf}{O(\infty)}
\begin{document}

\title{Random surface growth with a wall and Plancherel measures for $O(\infty)$}
\author{Alexei Borodin \and Jeffrey Kuan}
\date{}
\maketitle

\begin{abstract} We consider a Markov evolution of lozenge tilings
of a quarter-plane and study its asymptotics at large times. One of
the boundary rays serves as a reflecting wall.

We observe frozen and liquid regions, prove convergence of the local
correlations to translation-invariant Gibbs measures in the liquid
region, and obtain new discrete Jacobi and symmetric Pearcey
determinantal point processes near the wall.

The model can be viewed as the one-parameter family of Plancherel
measures for the infinite-dimensional orthogonal group, and we use
this interpretation to derive the determinantal formula for the
correlation functions at any finite time moment.
\end{abstract}

\tableofcontents

\section{Introduction}\label{Introduction}

The principal object of study in this paper is a one-parameter
family of probability measures on certain interlacing
two-dimensional particle systems that can be defined in at least
three different ways.

\smallskip

\noindent\textbf{Random lozenge tilings.} Consider the domain
pictured on the left in Figure~\ref{tiling} drawn on the regular
triangular lattice, and consider all possible tilings of this domain
by \textit{lozenges}\footnote{A lozenge consists of two neighboring
elementary triangles glued together.}.
\begin{center}
\begin{figure}[htp]
\caption{Lozenge tiling and nonintersecting paths} \medskip
\includegraphics[totalheight=5cm]{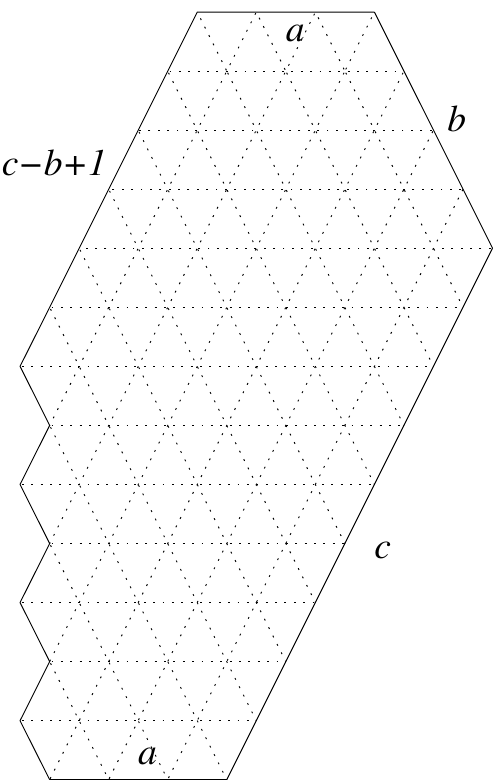}
\quad\quad
\includegraphics[totalheight=5cm]{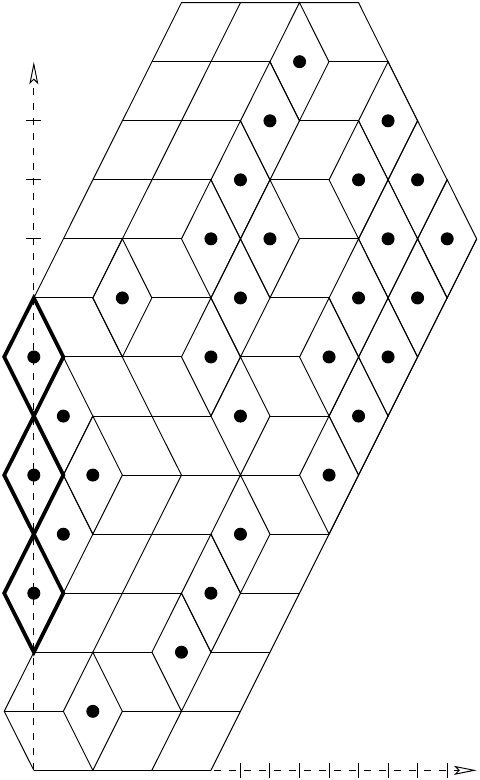}
\quad\quad
\includegraphics[totalheight=5cm]{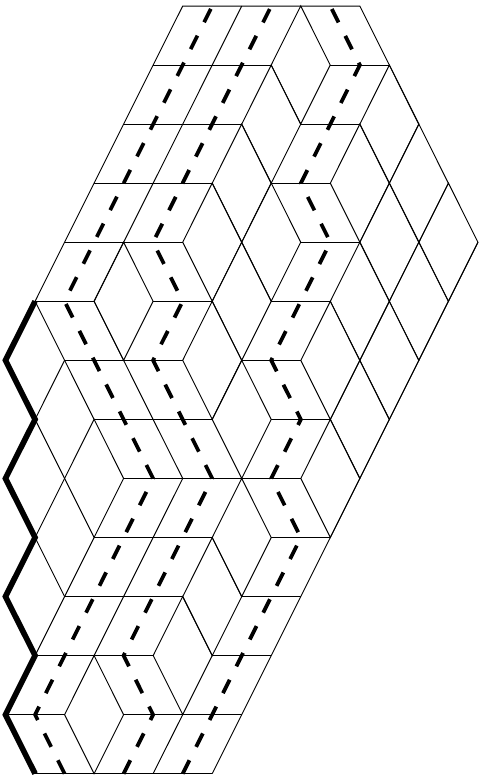}
\label{tiling}
\end{figure}
\end{center}
An example of lozenge tiling can be seen in the middle of Figure
\ref{tiling}. To each tiling we assign a weight equal to $\frac 12$
raised to the number of vertical lozenges on the left border of the
domain (three such lozenges are highlighted on the figure). Let us
normalize the weights so that the total weight of all tilings is 1;
then we obtain a probability distribution with three parameters
$a,b,c$ that represent side lengths of our domain.

Let us further consider the limit $a,b,c\to \infty$ so that
$2ab/c\to t>0$, and focus on the part of the tiling that is of
finite distance to the bottom-left corner of the domain. One can
show that in this limit our probability distributions weakly
converge to a probability measure $\mathcal M_t$ on lozenge tilings
of the quarter-plane, and it is the limiting measure that we are
interested in.

Lozenge tilings are also commonly viewed as stepped surfaces (when
three types of lozenges are interpreted as three faces of $1\times
1\times 1$ cubes in a three-dimensional space), as nonintersecting
paths (see the right-most part of Figure~\ref{tiling}), and as
dimers on the hexagonal lattice (see Figure~\ref{configur} in
Section \ref{EIoPiJ} below). Theory of dimer models is a rapidly
developing subject, see \cite{kn:Kenyon-lectures} for a recent
review and references.

In terms of nonintersecting paths, the initial $(a,b,c)$-measures
give an extra factor of 2 every time the left-most path passes by
the wall. Thus, it is natural to say that this path \emph{reflects
off} the wall.

\smallskip

\noindent\textbf{Random surface growth.} Any lozenge tiling is
uniquely determined by locations of lozenges of a single type. Let
us introduce coordinates on the plane as shown in Figures
\ref{tiling} and \ref{configur}, and mark the midpoints of all
vertical lozenges; call them \textit{particles}. Denote the
horizontal coordinates of all particles with vertical coordinate $m$
by $y^m_1>y^m_2>\dots$. Then $\mathcal M_t$ is a probability measure
on particle configurations
$$
\{y_k^m\mid k=1,2,\dots,[\tfrac {m+1}2];\ m=1,2,\dots\}\subset
\Z_{\ge 0}
$$
that satisfy the interlacing conditions
$y_{k+1}^{m+1}<y_k^m<y_k^{m+1}$ for all meaningful values of $k$ and
$m$.

We show that $\mathcal M_t$ is the time $t$ distribution of a
continuous time Markov chain defined as follows.

The initial condition is a single particle configuration when all
the particles are as much to the left as possible, i.e.
$y^m_k=m-2k+1$ for all $k,m$. Now let us describe the evolution.

We say that a particle $y^m_k$ is blocked on the right if
$y^m_k+1=y^{m-1}_{k-1}$, and it is blocked on the left if
$y^m_k-1=y^{m-1}_k$ (if the corresponding particle $y^{m-1}_{k-1}$
or $y^{m-1}_{k}$ does not exist, then $y^m_k$ is not blocked).

Each particle has two exponential clocks of rate $\frac 12$; all
clocks are independent. One clock is responsible for the right
jumps, while the other is responsible for the left jumps. When the
clock rings, the particle tries to jump by 1 in the corresponding
direction. If the particle is blocked, then it stays still. If the
particle is against the wall (i.e. $y^m_{[\frac {m+1}2]}=0$) and the
left jump clock rings, the particle is reflected, and it tries to
jump to the right instead.

When $y^m_k$ tries to jump to the right (and it is not blocked on
the right), we find the largest $r\in \Z_{\ge 0}\sqcup\{+\infty\}$
such that $y_k^{m+i}=y_k^{m}+i$ for $0\le i\le r$, and the jump
consists of all particles $\bigl\{y_k^{m+i}\bigr\}_{i=0}^r$ moving
to the right by 1. Similarly, when $y^m_k$ tries to jump to the left
(not being blocked on the left), we find the largest $l\in \Z_{\ge
0}\sqcup\{+\infty\}$ such that $y_{k+j}^{m+j}=y_k^m-j$ for $0\le
j\le l$, and the jump consists of all particles
$\bigl\{y_{k+j}^{m+j}\bigr\}_{j=0}^l$ moving to the left by 1.

In other words, the particles with smaller upper indices can be
thought of as heavier than those with larger upper indices, and the
heavier particles block and push the lighter ones so that the
interlacing conditions are preserved.

\begin{center}
\begin{figure}[htp]
\caption{First three jumps} \medskip
\includegraphics[height=2.8cm]{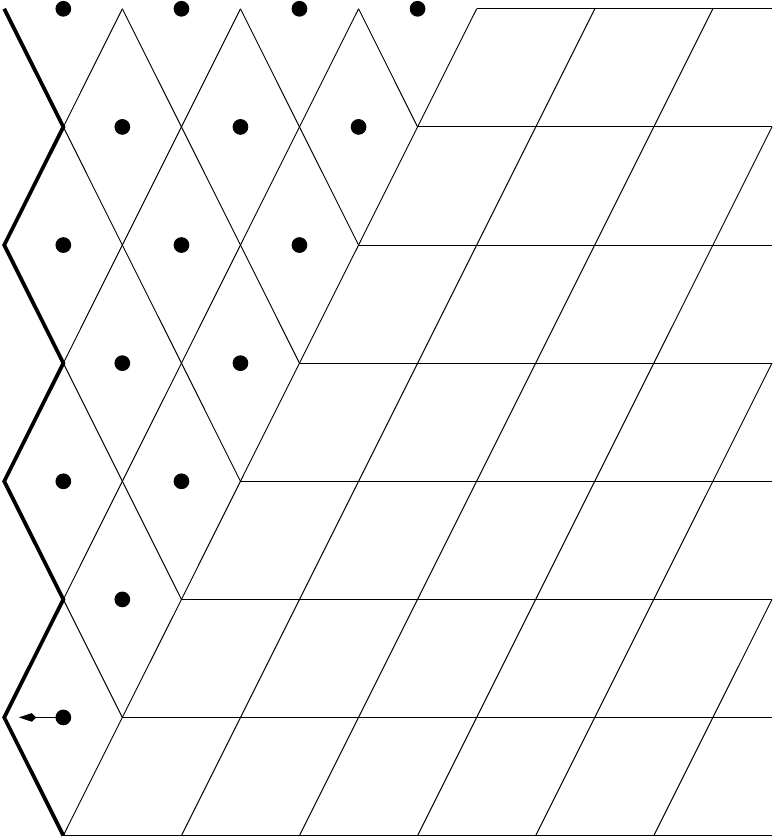}
\quad
\includegraphics[height=2.8cm]{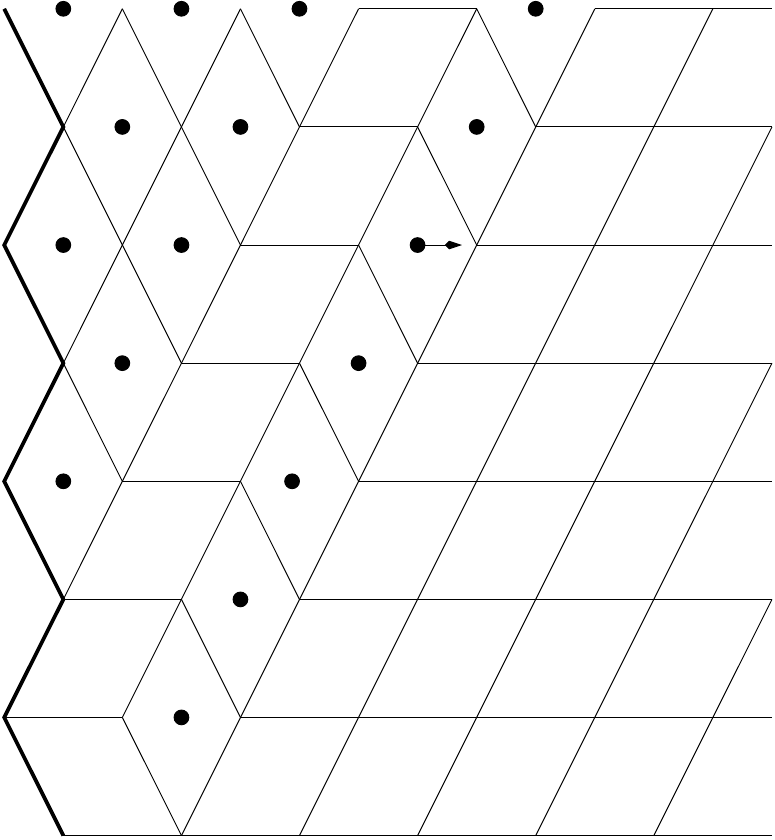}
\quad
\includegraphics[height=2.8cm]{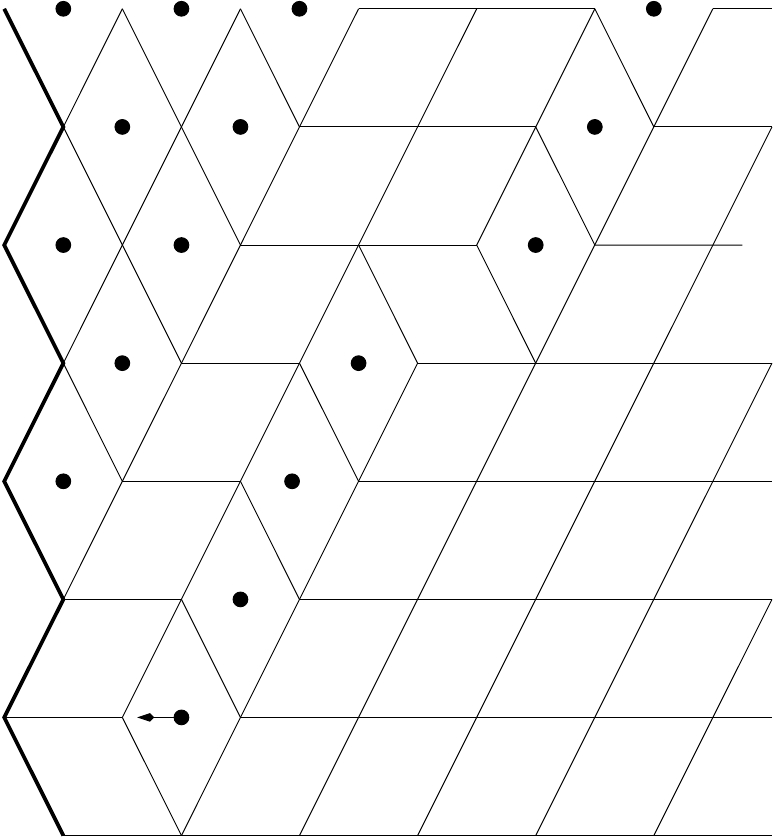}
\quad
\includegraphics[height=2.8cm]{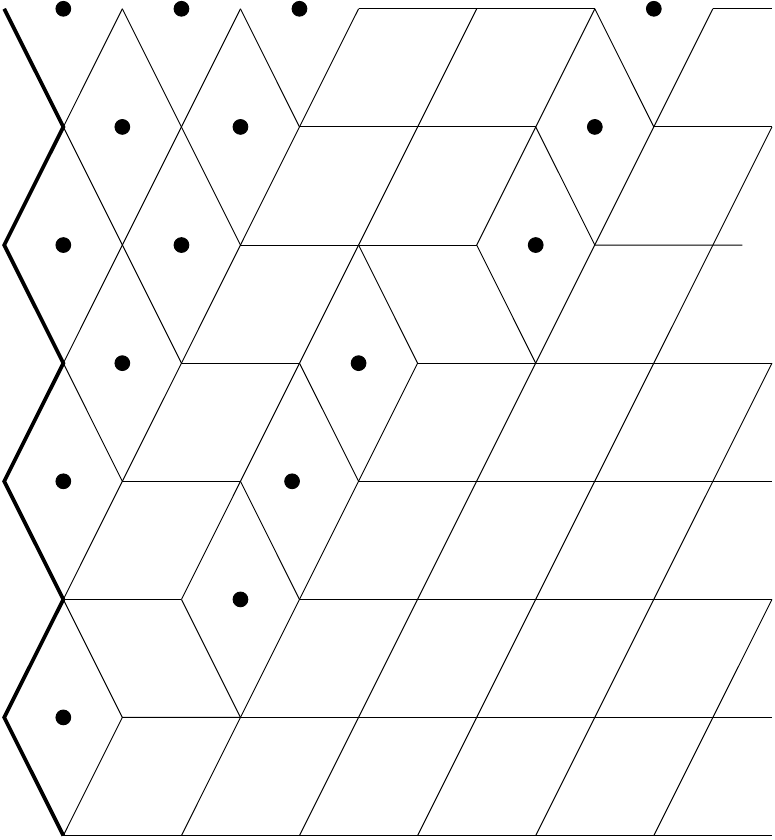} \label{jumps}
\end{figure}
\end{center}
Figure~\ref{jumps} depicts three possible first jumps: Left clock of
$y_1^1$ rings first (it gets reflected by the wall), then right
clock of $y_1^5$ rings, and then left clock of $y_1^1$ again.

In terms of the underlying stepped surface, the evolution can be
described by saying that we add possible ``sticks'' with base
$1\times 1$ and arbitrary length of a fixed orientation with rate
1/2, remove possible ``sticks'' with base $1\times 1$ and a
different orientation with rate 1/2, and the rate of removing sticks
that touch the left border is doubled.\footnote{This phrase is based
on the convention that
\includegraphics[height=0.4cm]{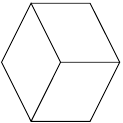} is a figure of a
$1\times1\times1$ cube. If one uses the dual convention that this is
a cube-shaped hole then the orientations of the sticks to be added
and removed have to be interchanged, and the tiling representations
of the sticks change as well.}
\begin{center}
\begin{figure}[htp]
\caption{Adding and removing ``sticks''} \medskip
\begin{center}
\includegraphics[height=1.8cm]{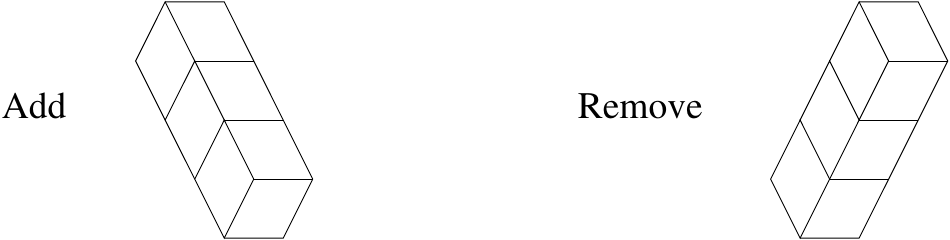} \label{sticks}
\end{center}
\end{figure}
\end{center}

A computer simulation of this dynamics can be found at\\
\href{http://www.math.caltech.edu/papers/Orth_Planch.html}
{$\mathtt{http://www.math.caltech.edu/papers/Orth\_Planch.html}$}.

Similar Markov chains have been previously studied in \cite{kn:BF}
without the wall, and in \cite{kn:WW} with a different
(``symplectic'') interaction with the wall.

\smallskip
\noindent\textbf{Representation Theory.} Let $O(N)$ be the group of
$N\times N$ orthogonal matrices with real entries. The group $O(N)$
is embedded in $O(N+1)$ as a subgroup of matrices fixing the
$(N+1)$st basis vector. Let $O(\infty)=\bigcup_{N=1}^\infty O(N)$ be
the infinite-dimensional orthogonal group.

The measures $\mathcal M_t$ are the Fourier transforms of the
distinguished one-pa\-ra\-me\-ter family of indecomposable
characters of $O(\infty)$ (the indecomposable characters of
$O(\infty)$ were classified in \cite{kn:OO} as a part of a solution
of a much more general problem). It is natural to call them the
\emph{Plancherel measures}. Details can be found in Section
\ref{MoP}.

Similarly defined Plancherel measures for the infinite symmetric
group $S(\infty)$ and the infinite-dimensional unitary group
$U(\infty)$ have been thoroughly studied, see \cite{kn:LS, kn:VK1,
kn:VK2, kn:BDJ1, kn:BDJ2, kn:OK, kn:BOO, kn:J, kn:K2} for
$S(\infty)$ and \cite{kn:K0, kn:Bi, kn:BK} for $U(\infty)$.

\smallskip
\noindent\textbf{Results.} We first prove, see Theorem \ref{exp}
below, that representation theoretic and Markov chain descriptions
of $\mathcal M_t$ given above are equivalent (the lozenge tiling
description of $\mathcal M_t$ is a simple corollary of the
representation theoretic one and Theorem 1.4 of \cite{kn:OO}). This
equivalence is far from being obvious, and we employ the general
formalism of \cite{kn:BF} to give a proof.

Our second result (Theorem \ref{theorem2}) shows that $\mathcal
M_t$, viewed as a measure on particle configurations $\{y^m_k\}$, is
a determinantal random point process (see Appendix \ref{Appendix A}
for basic definitions), and it also provides an explicit formula for
the correlation kernel. In fact, we prove such a result for random
point processes associated with arbitrary indecomposable characters
of $O(\infty)$.

We then focus on the asymptotics of $\mathcal M_t$ as $t\to\infty$. Note that, at first reading, one could look at the asymptotic
results without the construction in Sections \ref{MoP} and \ref{SD}.

As one might anticipate from previous results on dimer models and
Plancherel measures, cf. \cite{kn:Ken, kn:KO, kn:BF, kn:BK}, as
$t\to \infty$ the quarter-plane should split into ``frozen'' parts
and a ``liquid'' part. In each frozen part the tiling asymptotically
consists of lozenges of only one type, while in the liquid part the
random tiling locally (i.e. on the lattice scale) converges to the
unique (thanks to \cite{kn:Sh}) translation invariant Gibbs measure
of a certain slope; the slope depends on the location in the liquid
region. The underlying random surface should also converge, in a
suitable metric, to the deterministic smooth limit surface, and the
slopes of the Gibbs measures are the slopes of the tangent planes to
this limit shape.

In Theorem \ref{BulkLimits} we prove the statements about local
convergence. The frozen and liquid phases can be clearly seen in
Figure~\ref{RefereeFigure}. More exactly, we prove the
convergence of our correlation kernel to the \emph{incomplete
beta-kernel} first obtained in \cite{kn:Ken0, kn:OR0}, see
\cite{kn:BS} for a detailed discussion of the Gibbs properties of
the corresponding determinantal process. In Section
\ref{limitshape}, we also provide a formula for the hypothetical
limit shape, although we do not address the concentration of measure
phenomenon.

\begin{center}
\begin{figure}[htp]
\caption{The figure on the left is a computer simulation of the Markov chain at time 27. The first 320 levels are drawn. The figure on the right shows where the symmetric Pearcey and discrete Jacobi kernels appear.}
\begin{center}\includegraphics[height=5cm]{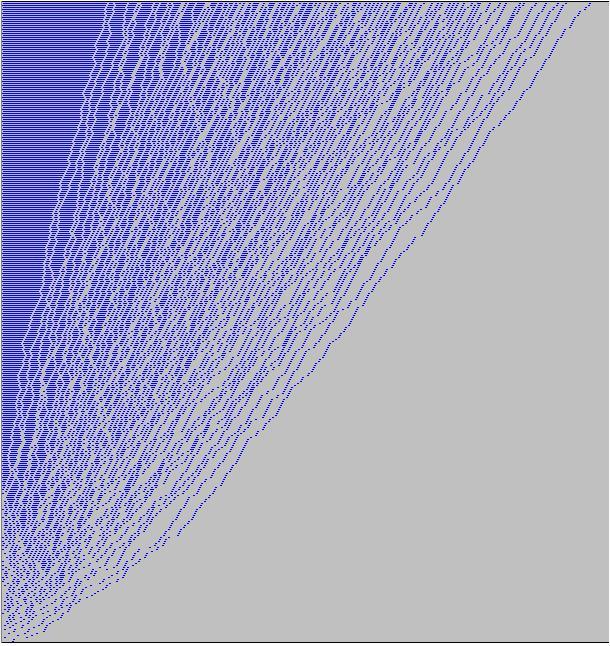}\ \ \ \ \  \includegraphics[height=5cm]{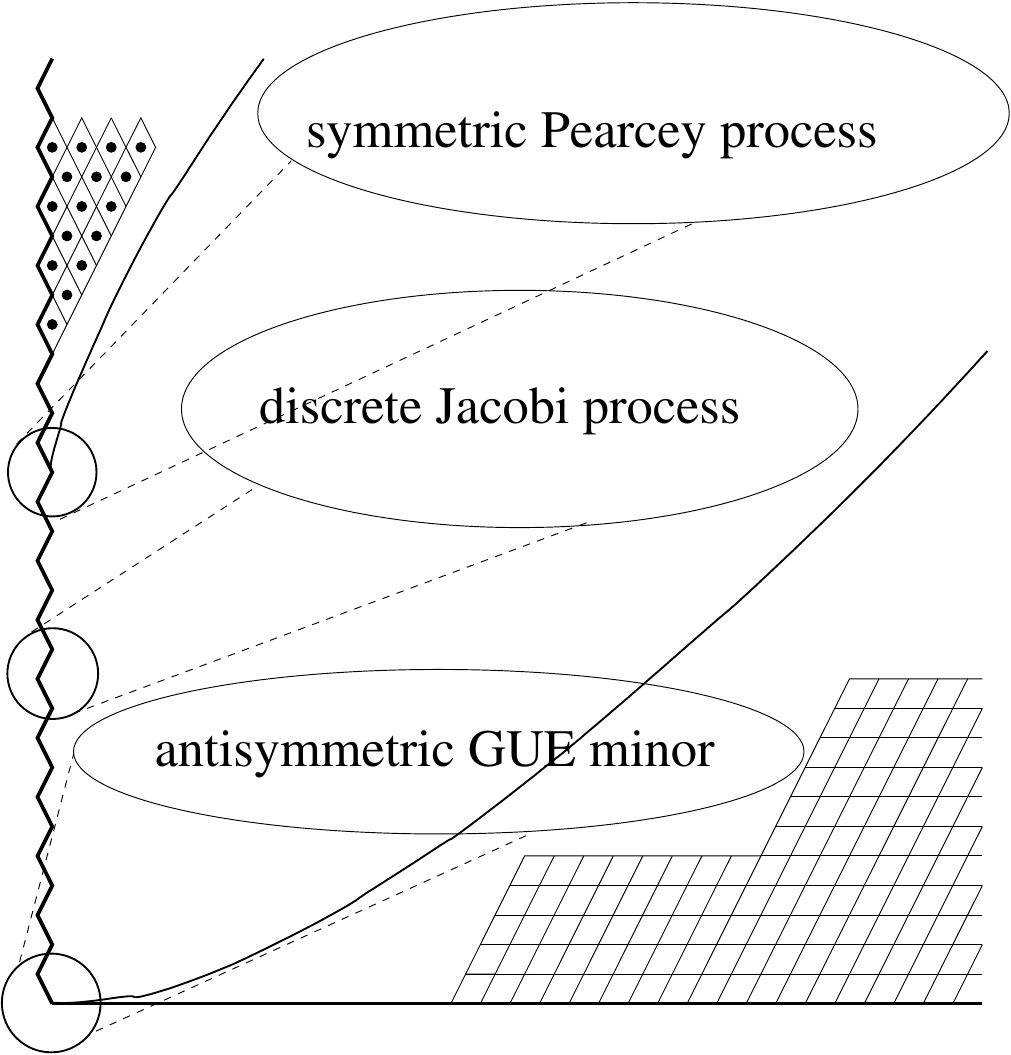}\end{center}
\label{RefereeFigure}
\end{figure}
\end{center}

From previously known results it is also natural to expect that near
the boundaries between frozen and liquid regions away from the wall,
our determinantal process converges in an appropriate scaling to the
so-called Airy process, see e.g. Section 4.5 of \cite{kn:BK} for an
analogous results in the case of Plancherel measures for
$U(\infty)$. This is indeed correct, and since the result and the
method of proving it are well known by now, we did not include them
in this paper.

The main novel feature of the model analyzed in this paper is the
wall, and we focus on the corresponding scaling limits.

The simplest case is the neighborhood of the origin (the corner of
the quarter-plane). Taking $t\to\infty$ asymptotics in Theorem
\ref{theorem1}, one can easily show (although we do not do this in
the paper) that as one scales the horizontal coordinate by
$\sqrt{t}$ and keeps the vertical coordinate finite, $\mathcal M_t$
converges to the \emph{antisymmetric GUE minor process} (aGUEM) of
\cite{kn:FN}, see also \cite{kn:D1, kn:D2}. Note that the way this
process was obtained in \cite{kn:FN} from lozenge tilings of a
half-hexagon is also similar to what we are doing. The aGUEM process
can also be obtained from the evolution of interacting Brownian
motions with a reflecting wall, which can be seen as a limit of the
Markov chain described above; see \cite{kn:BFS, kn:BFPSW} for
details.

The first genuinely new limit that we obtain takes place in the
region where the liquid part meets the wall. In Theorem
\ref{DiscreteJacobiKernel} we show that on the lattice scale our
determinantal process converges to a limiting determinantal process
on $\Z_{\ge 0}\times\Z$ that is translation invariant in the second
coordinate. We use the term \emph{discrete Jacobi kernel} for the
correlation kernel of the limiting process.

The second new determinantal process arises when we look near the
location where the boundary between frozen and liquid phases meets
the wall. In Theorem \ref{SymmetricPearceyKernel} we prove that as
one scales the horizontal coordinate by $t^\frac 14$ and the
vertical one by $t^\frac 12$, the correlation functions of our point
process converge to the determinants of the kernel $\mathcal
K(\sigma_1,\eta_1;\sigma_2,\eta_2)$ on $\R_+\times\R$ defines as
follows.

Let $C$ be the contour in $\mathbb C$ consisting of rays from
$\infty e^{i\pi/4}$ to $0$ to $\infty e^{-i\pi/4}$. Then

\begin{multline*}
\mathcal{K}(\sigma_1,\eta_1;\sigma_2,\eta_2)= \frac{2}{\pi^2 i}
\int\limits_{u\in C}\int\limits_{x\in \R_+}
e^{-\eta_1 x^2 + \eta_2 u^2 - x^4 + u^4}\cos(\sigma_1 x) \cos(\sigma_2 u)\frac{u\,dxdu}{u^2-x^2}\\
-\frac{1}
{2\sqrt{\pi(\eta_1-\eta_2)}}\left(\exp\frac{(\sigma_1+\sigma_2)^2}{4(\eta_2-\eta_1)}+
\exp\frac{(\sigma_1-\sigma_2)^2}{4(\eta_2-\eta_1)}\right)\mathbf{1}_{\eta_1>\eta_2}.
\end{multline*}

We call it the \emph{symmetric Pearcey kernel} because of the
similarity of the above expression to the Pearcey kernel that has
previously appeared in \cite{kn:ABK, kn:BK, kn:BH, kn:BH2, kn:OR,
kn:TW2}. Using the nonintersecting paths interpretation mentioned
above, it seems plausible that the symmetric Pearcey kernel should
also appear in the model treated in \cite{kn:KMW} at the critical
location when the paths touch the wall. Indeed, we were informed by the authors of \cite{kn:KMW} that this is indeed the case, cf. \cite{kn:KMW2}.

\textbf{Acknowledgements}. The authors are very grateful to Grigori
Olshanski for a number of valuable remarks. The first named author
(A.~B.) was partially supported by the NSF grant DMS-0707163.

\section{Measures on partitions}\label{MoP}
\subsection{Representations of Orthogonal Groups}\label{RoOG}
Let $O(N)$ denote the group of all real-valued $N\times N$ orthogonal matrices.
For each $N$, $O(N)$ is naturally embedded in $O(N+1)$ as the
subgroup fixing the $(N+1)$-st basis vector. Equivalently, $O\in
O(N)$ can be thought of as an $(N+1)\times(N+1)$ matrix by setting
$O_{i,N+1}=O_{N+1,j}=0$ for $1\leq i,j\leq N$ and $O_{N+1,N+1}=1$.
The union $\bigcup_{N=1}^{\infty} O(N)$ is denoted by $O(\infty)$.

Let us review some basic results from the representation theory of
finite- and infinite-dimensional orthogonal groups, see e.g.
\cite{kn:OO}.

A \textit{character} of $\Oinf$ is a positive definite function $\chi:\Oinf\rightarrow\mathbb{C}$ which is constant on conjugacy classes and normalized, i.e. $\chi(e)=1$. We further assume that $\chi$ is continuous on each $O(N)\subset \Oinf$. The set of all characters of $\Oinf$ is convex, and the extreme points of this set are called \textit{extreme characters}.

The set of extreme characters can be parametrized. Let $\R^{\infty}$ denote the product of countably many copies of $\R$. Let $\Omega$ be the set of all $(\alpha,\beta,\delta)$ such that

\[\alpha=(\alpha_1\geq\alpha_2\geq\ldots\geq 0)\in\mathbb{R}^{\infty},\ \beta=(\beta_1\geq\beta_2\geq\ldots\geq 0)\in\mathbb{R}^{\infty},\ \delta\in\R,\]
\[\displaystyle\sum_{i=1}^{\infty}(\alpha_i+\beta_i)\leq\delta.\]
Set
\[\gamma=\delta-\displaystyle\sum_{i=1}^{\infty}(\alpha_i+\beta_i)\geq 0.\]

The special orthogonal group, denoted by $SO(N)$, is the subgroup of
$O(N)$ consisting of matrices with determinant $1$. Let $O\in
SO(N)$. If $N=2m$ is even, then the spectrum of any $O$ is of the
form $\{z_1,z_1^{-1},\ldots,z_m,z_m^{-1}\}$, while if $N=2m+1$ is
odd, then the spectrum of any $O\in SO(N)$ is of the form
$\{z_1,z_1^{-1},\ldots,z_m,z_m^{-1},1\}$, where in both cases $z_i$
are complex numbers having absolute value $1$. For this paper, if
$\chi$ is a character of $O(\infty), SO(2m)$ or $SO(2m+1)$, then $\chi(O)$ is written interchangably with
$\chi(z_1,\ldots,z_m)$. Using this notation, any $\omega\in\Omega$
defines a function on $SO(\infty)=\bigcup_{N=1}^{\infty} SO(N)$ by (see Theorem 1.4 of
\cite{kn:OO})

\begin{equation}\label{ChiOmega}
\chi^{\omega}(O)=\displaystyle\prod_{j=1}^{m} E^{\omega}\left(\frac{z_j+z_j^{-1}}{2}\right),
\end{equation}
where

\[E^{\omega}\left(\frac{z+z^{-1}}{2}\right)=e^{\frac{\gamma}{2}(z+z^{-1}-2)}\displaystyle\prod_{i=1}^{\infty}\frac{(1+\frac{\beta_i}{2}(z-1))(1+\frac{\beta_i}{2}(z^{-1}-1))}{(1-\frac{\alpha_i}{2}(z-1))(1-\frac{\alpha_i}{2}(z^{-1}-1))},\]
or by setting $x=(z+z^{-1})/2$,
\[
E^{\omega}(x)=e^{\gamma(x-1)}\displaystyle\prod_{i=1}^{\infty}\frac{1-\beta_i(1-x)+\beta_i^2(1-x)/2}{1+\alpha_i(1-x)+\alpha_i^2(1-x)/2}.
\]

Note that the infinite product converges because $\sum(\alpha_i+\beta_i)$ is finite. As $\omega$ ranges over $\Omega$, the functions $\chi^{\omega}$ range over all the extreme characters of $\Oinf$ (Theorem 5.2 of \cite{kn:OO}).

A \textit{partition of length $\le N$} is a sequence of
nonincreasing nonnegative integers
$\lambda=(\lambda_1\geq\ldots\geq\lambda_N\geq 0)$. It is a
classical result that the set of all irreducible representations of
$SO(2N+1)$ over $\mathbb{C}$ is parameterized by partitions of length $\le N$. The
character of the irreducible representation of $SO(2N+1)$
parameterized by $\lambda$ will be denoted by
$\chi^{\lambda}_{SO(2N+1)}$, and its dimension by
$\dim_{SO(2N+1)}\lambda$. Similarly, the set of all irreducible
representations of $SO(2N)$ over $\mathbb{C}$ is parameterized by sequences of
integers $\lambda=(\lambda_1,\ldots\,\lambda_N)$ satisfying
$\lambda_1\geq\ldots\geq\lambda_{N-1}\geq\vert\lambda_N\vert$, and
the corresponding character and dimension are denoted by
${\chi}^{\lambda}_{SO(2N)}$ and $\dim_{SO(2N)}\lambda$. Let $\GT_N$
denote the set of all partitions of length $\le N$ ($\mathbb{J}$
stands for Jacobi, see below). For convenience of notation, let
$\lambda^*=(\lambda_1,\lambda_2,\ldots,\lambda_{N-1},-\lambda_N)$
for any $\lambda\in\GT_N$.

For any $\omega\in\Omega$, the restriction of $\chi^{\omega}$ to any $SO(M)$ defines two measures $P^{\omega}_{N,-1/2}$ and $P^{\omega}_{N,1/2}$ on $\GT_N$ by
\begin{eqnarray}\label{Consistency1}
\chi^{\omega}\vert_{SO(2N+1)}&=&\displaystyle\sum_{\lambda\in\GT_N} P^{\omega}_{N,1/2}(\lambda)\,\frac{\chi^{\lambda}_{SO(2N+1)}}{\dim_{SO(2N+1)}\lambda}\,,\\
\chi^{\omega}\vert_{SO(2N)}&=&\displaystyle\sum_{\lambda\in\GT_N}
P^{\omega}_{N,-1/2}(\lambda)\,\frac{\chi^{\lambda}_{SO(2N)}+\chi^{\lambda^*}_{SO(2N)}}{2\dim_{SO(2N)}\lambda}\,,
\label{Consistency2}
\end{eqnarray}
where $\chi^{\lambda}$ is the character of $SO(2N+1)$ or $SO(2N)$
parameterized by $\lambda$. Evaluating both sides of the equation at
the identity of the group shows that the sum of the weights is one.
Furthermore, the weights are nonnegative because the characters are
positive definite, so we obtain probability measures. Note that if the parameters of $\omega$ are
$\alpha=0,\beta=0,\gamma=0$, then $P_N^{\omega}$ is the delta
measure supported at the partition $(0,0,\ldots,0)$.

There is a useful explicit formula for $\chi^{\lambda}$. Let
$J_k^{(a,b)}(x)$ denote the $k$-th Jacobi polynomial with parameters
$a,b$ see e.g. \cite{kn:S}. Define the constant $c_k$ to be
\[
c_k=
\begin{cases}
\dfrac{1\cdot 3\cdot\ldots\cdot(2k-1)}{2\cdot 4\cdot\ldots\cdot 2k},&\text{if}\ \ k>0,\\
1,&\text{if}\ \  k=0,
\end{cases}
\]
and let $\mathsf{J}_k^{(a,b)}(x)=\Pk(x)/c_k$. The character of $O\in SO(2N)$ or $SO(2N+1)$ is
\begin{gather}\label{chiformula}
\chi^{\lambda}_{SO(2N+1)}(z_1,\ldots,z_N)
=\frac{\det\left[\mathsf{J}_{\lambda_i-i+N}^{(1/2,-1/2)}\left(\frac{z_j+z_j^{-1}}{2}\right)\right]_{i,j=1}^N}
{\det\left[\bigl(z_j+z_j^{-1}\bigr)^{N-i}\right]_{i,j=1}^N}\,,\\
\label{chiformula2}
(\chi^{\lambda}_{SO(2N)}+\chi^{\lambda^*}_{SO(2N)})(z_1,\ldots,z_N)
=\frac{
\det\left[2\mathsf{J}_{\lambda_i-i+N}^{(-1/2,-1/2)}\left(\frac{z_j+z_j^{-1}}{2}\right)\right]_{i,j=1}^N}{\det\left[\bigl(z_j+z_j^{-1}\bigr)^{N-i}\right]_{i,j=1}^N}\,.
\end{gather}
Expressions \eqref{chiformula} and \eqref{chiformula2} can be simplified using
\begin{align}\label{ztothes}
\mathsf{J}_s^{(1/2,-1/2)}\left(\frac{z+z^{-1}}{2}\right) =& \frac{z^{s+1/2}-z^{-s-1/2}}{z^{1/2}-z^{-1/2}},\\
\mathsf{J}_s^{(-1/2,-1/2)}\left(\frac{z+z^{-1}}{2}\right) =& \frac{z^s+z^{-s}}{2}.
\label{ztothes2}
\end{align}
The Jacobi polynomials also satisfy
\begin{eqnarray}\label{JacobiIdentities}
x\mathsf{J}_k^{(a,-1/2)}(x)&=&\frac{1}{2}\mathsf{J}_{k+1}^{(a,-1/2)}+\frac{1}{2}\mathsf{J}_{k-1}^{(a,-1/2)},\ \ a=\pm\frac{1}{2},\ \ k>0\\
x\mathsf{J}_0^{(1/2,-1/2)}(x)&=&-\frac{1}{2}\mathsf{J}_0^{(1/2,-1/2)}(x)+\frac{1}{2}\mathsf{J}_1^{(1/2,-1/2)}\\
x\mathsf{J}_0^{(-1/2,-1/2)}(x)&=&\mathsf{J}_1^{(-1/2,-1/2)}(x).
\label{JacobiIdentities2}
\end{eqnarray}

Explicit formulas for the dimensions are
\begin{multline*}
\dim_{SO(2N+1)}\lambda = \displaystyle\prod_{1\leq i<j\leq N}\frac{l_i^2-l_j^2}{m_i^2-m_j^2}\prod_{1\leq i\leq N}\frac{l_i}{m_i},\\
\text{where}\ l_i=\lambda_i+N-i+\tfrac 12,\ m_i=N-i+\tfrac 12
\end{multline*}
and
\[
\dim_{SO(2N)}\lambda = \displaystyle\prod_{1\leq i<j\leq
N}\frac{l_i^2-l_j^2}{m_i^2-m_j^2}, \ \text{where}\
l_i=\lambda_i+N-i,\ m_i=N-i.
\]

Let $h_k^{(a,b)}$ denote the squared norm of $\Pk$,
\[
h_k^{(a,b)}=\int_{-1}^1 J_k^{(a,b)}(x)^2 (1-x)^a (1+x)^b dx.
\]
Then
\begin{equation}\label{hk}
h_k^{(a,b)}=\pi c_k^2/W^{(a,b)}(k),\ \ \text{for}\
a=\pm\tfrac{1}{2},\ b=-\tfrac 12,
\end{equation}
where
\[
W^{(a,b)}(k)=
\begin{cases}
2,\ \ \text{if}\ \ k>0,a=b=-\frac{1}{2}\\
1,\ \ \text{if}\ \ k=0,a=b=-\frac{1}{2}\\
1,\ \ \text{if}\ \ k\geq 0,a=\frac{1}{2},b=-\frac{1}{2}
\end{cases}
\]
For proofs of these equations, see \S 1 of \cite{kn:OO2}, Chapter 4 of \cite{kn:S}, and Chapter 24 of \cite{kn:FH}.

In Section \ref{EFftM}, a formula for $P^{\omega}_{N,a}$ will be proved.

\subsection{Central Measures}\label{CM}
For $a=\pm 1/2$, the measure $P^{\omega}_{N,a}$ on $\GT_N$ can be extended to a more general measure $P^{\omega}$. The purpose of this section is to explain how $P^{\omega}$ is constructed.

Let $\GT_{N,-}$ and $\GT_{N,+}$ be two copies of $\GT_N$. Set
$\GT=\bigcup_{N\geq 1} (\GT_{N,-}\cup\GT_{N,+})$. Turn $\GT$ into a
graph as follows. Draw an edge between $\lambda\in\GT_{N,-}$ and
$\mu\in\GT_{N,+}$ if
$0\leq\lambda_N\leq\mu_N\leq\ldots\leq\lambda_1\leq\mu_1$. Draw an
edge between $\lambda\in\GT_{N,+}$ and $\mu\in\GT_{N+1,-}$ if
$\mu_{N+1}\leq\lambda_N\leq\mu_{N}\leq\lambda_{N-1}\leq\ldots\leq\lambda_1\leq\mu_1$.
It will be convenient to set $\lambda_{N+1}=0$, which gives the
additional inequality $\lambda_{N+1}\leq\mu_{N+1}$. In either case,
use the notation $\lambda\prec\mu$.

Note that $\lambda\prec\mu$ is equivalent to the following relation from representation theory. Let $V_{\mu}$ be the representation of $SO(M)$ corresponding to $\mu$ and let $V_{\lambda}$ be the representation of $SO(M-1)$ corresponding to $\lambda$. With this notation, $V_{\lambda}$ is a subrepresentation of $V_{\mu}\vert_{SO(M-1)}$ iff $\lambda\prec\mu$. See \cite{kn:Z}.

For any $(\lambda,\mu)\in\GT_{N-1,+}\times\GT_{N,-}$, set
\[
\varkappa(\lambda,\mu)=
\begin{cases}
1, \ \text{if} \ \lambda\prec\mu,\\
0, \ \text{if} \ \lambda\not\prec\mu.
\end{cases}
\]
If $(\lambda,\mu)\in\GT_{N,-}\times\GT_{N,+}$, set
\[
\varkappa(\lambda,\mu)=
\begin{cases}
2, \ \text{if} \ \lambda\prec\mu,\ \lambda_N>0,\\
1, \ \text{if} \ \lambda\prec\mu,\ \lambda_N=0,\\
0, \ \text{if} \ \lambda\not\prec\mu.
\end{cases}
\]
The definition of $\varkappa(\lambda,\mu)$ is motivated by the branching rules
\begin{eqnarray}\label{Branching1}
V_{\mu}\vert_{SO(2N)} &=& \displaystyle\bigoplus_{\begin{subarray}{c} \lambda\prec\mu\\ \lambda_N>0 \end{subarray}} (V_{\lambda}\oplus V_{\lambda^*}) \oplus \bigoplus_{\begin{subarray}{c} \lambda\prec\mu\\ \lambda_N=0 \end{subarray}} V_{\lambda},\ \lambda\in\GT_{N,-},\ \mu\in\GT_{N,+}\\
V_{\mu}\vert_{SO(2N-1)} &=& \bigoplus_{\lambda\prec\mu} V_{\lambda},\ \lambda\in\GT_{N-1,+},\ \mu\in\GT_{N,-}.
\label{Branching2}
\end{eqnarray}

A \textit{path} in $\GT$ is a sequence $t=(t^{1,-}\prec t^{1,+}\prec t^{2,-}\prec\ldots)$ such that $t^{i,-}\in\GT_{i,-}$ and $t^{i,+}\in\GT_{i,+}$ for $1\leq N$. Let $\GT_{paths}$ denote the set of all paths in $\GT$. There are also \textit{finite paths}, which are sequences $u=(u^{1,-}\prec u^{1,+}\prec u^{2,-} \prec \ldots \prec u^{N,\pm})$ that end at some $u^{N,\pm}$. Given such a finite path $u$, define the cylindrical set
\[
C_u = \{t\in\GT_{paths} :  t^{1,-}=u^{1,-},t^{1,+}=u^{1,+},\ldots,t^{N,\pm}=u^{N,\pm}\}.
\]
Also for a finite path, define the weight $w_u$ to be
\[
w_u = \varkappa(u^{1,-},u^{1,+})\varkappa(u^{1,+},u^{2,-})\ldots.
\]
The brancing rules imply that if we sum $w_u$ over all finite paths $u$ that end at $\lambda$, we get $\dim\lambda=\dim V_{\lambda}$. Note that $\dim V_{\lambda}=\dim V_{\lambda^*}$.

A probability measure $\vert\cdot\vert$ on $\GT_{paths}$ is called \textit{central} if
\[
\frac{\vert C_u\vert}{w_u} = \frac{\vert C_v \vert}{w_v}
\]
for any two finite paths $u,v$ that end at the same partition. For a
more general definition of central measures, see section 6 of
\cite{kn:K1}.

Each $\omega\in\Omega$ defines a central measure $P^{\omega}$ on $\GT_{paths}$ as follows. The measure of any cylindrical set $C_u$ is given by $P_{N,\pm}^{\omega}(u_{N,\pm})/\dim u_{N,\pm}$. This measure is well-defined because of the consistency relations (cf. \eqref{Consistency1}-\eqref{Consistency2})
\[
\displaystyle\sum_{\mu\in\GT_{N,+}} P_{N,+}^{\omega}(\mu)\varkappa(\lambda,\mu) = P_{N,-}^{\omega}(\lambda),
\]
\[
\displaystyle\sum_{\mu\in\GT_{N,-}} P_{N,-}^{\omega}(\mu)\varkappa(\lambda,\mu) = P_{N-1,+}^{\omega}(\lambda).
\]

\subsection{\texorpdfstring{Equivalent Interpretations of Paths in $\mathbb{J}$}{Equivalent Interpretations of Paths in J}}\label{EIoPiJ}
It will be useful to interpret measures on $\mathbb{J}_{paths}$ as random point processes on a two-dimensional lattice or as random lozenge tilings of a quarter plane.

Set
\[
\mathfrak{X}=\Z_{\geq 0}\times\Z_{>0}\times\{\pm\tfrac{1}{2}\}, \ \
\mathfrak{Y}=\Z_{\geq 0}\times\Z_{>0}.
\]

For $(n_j,a_j)\in\Z_{>0}\times\{\pm \frac 12\}$, $j=0,1$, we write
$(n_0,a_0)\triangleleft (n_1,a_1)$ if $2n_0+a_0<2n_1+a_1$. Also
write $(n_0,a_0)\trianglelefteq (n_1,a_1)$ if $2n_0+a_0\leq
2n_1+a_1$. That is,
\[
(1,-1/2)\triangleleft (1,1/2) \triangleleft (2,-1/2) \triangleleft\ldots.
\]
Furthermore, set
$d(n_0,a_0;n_1,a_1)=\vert 2(n_1-n_0)+a_1-a_0 \vert$. In other words,
$d(n_0,a_0;n_1,a_1)$ is the distance between the levels $(n_0,a_0)$
and $(n_1,a_1)$.

We identify $\mathfrak{X}$ and $\mathfrak{Y}$ via the bijection
\begin{equation*}
\iota:\mathfrak{X} \rightarrow \mathfrak{Y},\qquad (x,n,a) \mapsto
 \left(2x+a+\tfrac{1}{2},2n+a-\tfrac{1}{2}\right).
\end{equation*}
To any finite or infinite path
$\boldsymbol{\lambda}=(\lambda^{(1),-1/2}\prec\lambda^{(1),1/2}\prec\ldots)$ in
$\GT$, we associate two point configurations (subsets)
$\mathcal{L}_{\mathfrak{X}}(\boldsymbol{\lambda})\subset\mathfrak{X}$ and
$\mathcal{L}_{\mathfrak{Y}}(\boldsymbol{\lambda})\subset\mathfrak{Y}$ as follows:
\[
\mathcal{L}_{\mathfrak{X}}(\boldsymbol{\lambda})=\left\{(x_k^{(n),a},n,a) : 1\leq
k\leq n,\ \ a\in\{\pm\tfrac{1}{2}\},\ \ n\geq 1\right\},
\]
\[
\mathcal{L}_{\mathfrak{Y}}(\boldsymbol{\lambda})=\left\{\left(y_k^{2n+a-1/2},2n+a-\tfrac{1}{2}\right)
: 1\leq k\leq n,\ \ a\in\{\pm\tfrac{1}{2}\},\ \ n\geq 1\right\},
\]
where
\[
x_k^{(n),a}=\lambda_k^{(n),a}+n-k, \ \
y_k^{2n+a-1/2}=2(\lambda_k^{(n),a}+n-k)+a+\tfrac{1}{2}.
\]
Note that
$\iota(\mathcal{L}_{\mathfrak{X}}(\boldsymbol{\lambda}))=\mathcal{L}_{\mathfrak{Y}}(\boldsymbol{\lambda})$
for any $\boldsymbol{\lambda}$.

In Figure~\ref{configur}, black dots mark the elements of
$\mathcal{L}_{\mathfrak{Y}}(\boldsymbol{\lambda})$ for
$\boldsymbol{\lambda}=((1)\prec (1)\prec (1,0)\prec (2,1)\prec
(2,1,1)\prec (3,1,1))$.

\begin{center}
\begin{figure}[htp]
\caption{Elements of $\mathbb{J}_{paths}$ can be interpreted as particles, lozenges, or dimers.}
\begin{center}\includegraphics[height=5cm]{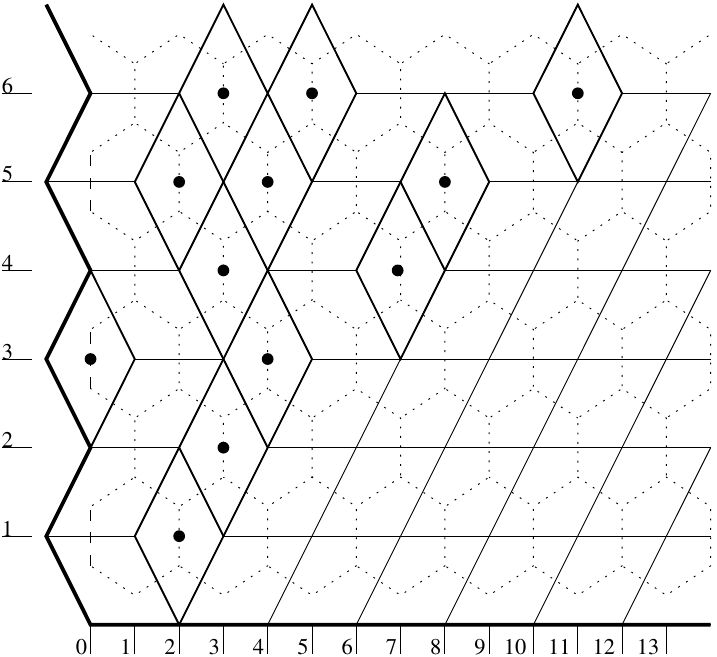}\end{center}
\label{configur}
\end{figure}
\end{center}

The interlacing property for paths in $\GT$ turns into
\[
y_{k+1}^{m+1}<y_k^{m}<y_k^{m+1},\ \ 1\leq m, k\leq \left[\frac{(m+1)}{2}\right].
\]

This construction gives rise to a bijection between infinite paths
in $\mathbb{J}$ and certain lozenge tilings of the quarter plane
with boundary as indicated in Figure~\ref{configur}. Elements of
$\mathcal{L}_{\mathfrak{Y}}(\boldsymbol{\lambda})$ correspond to
centers of lozenges of one specific type, as in Figure
\ref{configur}. The image of $\GT_{paths}$ consists of lozenge
tilings with $\left[\frac{m+1}{2}\right]$ lozenges of this type on
the $m$th horizontal row, for $m\geq 1$.

Equivalently, lozenge tilings can be viewed as dimers on the dual
hexagonal lattice, see e.g. \cite{kn:Kenyon-lectures}. In this language,
paths in $\GT$ correspond to dimers with exactly
$\left[\frac{m+1}{2}\right]$ vertical edges crossing the $m$th
horizontal line.

Thus, measures on $\GT_{paths}$ yield random point processes on
$\mathfrak{X}$ and $\mathfrak{Y}$, a measure on lozenge tilings, and
a measure on dimers. The \textit{centrality} of such a measure on
dimers can now be phrased in the following Gibbs-like manner: Assign
a weight of $1/2$ to all the vertical edges of the hexagonal lattice
that lie on the veritcal line with coordinate $0$. These are marked
by dashed lines in Figure~\ref{configur}. All other edges have a
weight of $1$. Then, given the set of $\left[\frac{m+1}{2}\right]$
vertical edges crossing the $m$th horizontal line, the conditional
distribution of the dimers below this line is proportional to the
product of the edge weights of the dimers.

Recall from sections \ref{RoOG} and \ref{CM} that any
$\omega\in\Omega$ defines a probability measure $P^{\omega}$ on
$\GT_{paths}$. Let $\mathcal{P}^{\omega}_{\mathfrak{X}}$ and
$\mathcal{P}^{\omega}_{\mathfrak{Y}}$ denote the resulting point
processes on $\mathfrak{X}$ and $\mathfrak{Y}$, respectively. If the
parameters of $\omega$ are such that all $\alpha_i=\beta_i=0$, then
let $\rho_{k,\mathfrak{X}}^{\gamma}$ and
$\rho_{k,\mathfrak{Y}}^{\gamma}$ denote the $k$th correlation
functions of $\mathcal{P}^{\omega}_{\mathfrak{X}}$ and
$\mathcal{P}^{\omega}_{\mathfrak{Y}}$, respectively. See Appendix
\ref{Appendix A} for general definitions of point processes. We will
need the correlation functions in section \ref{AotK}.

\subsection{Preliminary Lemmas}
Before continuing, a couple of lemmas will be needed. Since they will be used several times throughout this paper, it is convenient to gather them in this section. Lemma \ref{Hua} is a variant of the Cauchy-Binet formula.

\begin{lemma}\label{Hua}
For each nonnegative integer $k$, let $e_k$ and $g_k$ be some functions on $\mathbb{C}$. For $1\leq i\leq N$,  let $x_i,\zeta_i$ be complex numbers such that $\sum_{k=0}^{\infty} e_k(\zeta_j)g_k(x_i)$ converges absolutely for all $1\leq i,j\leq N$. Then
\begin{multline*}
\displaystyle\sum_{k_1>\ldots>k_N\geq
0}\det[e_{k_i}(\zeta_j)]_{i,j=1}^N \det[g_{k_i}(x_j)]_{i,j=1}^N
=\det\left[\displaystyle\sum_{k=0}^{\infty}e_k(\zeta_j)g_k(x_i)\right]_{i,j=1}^N.
\end{multline*}
\end{lemma}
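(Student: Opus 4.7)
The statement is the infinite version of the Cauchy-Binet identity, so my plan is to derive it either as a direct limit of the finite Cauchy-Binet formula or by an explicit rearrangement of the Leibniz expansion. Both approaches rest on the single analytic hypothesis that each entry of the right-hand matrix is the sum of an absolutely convergent series.

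First, I would recognize the right-hand side as $\det(AB)$, where $A$ is the (formally infinite) matrix with entries $A_{ik}=g_k(x_i)$, $i=1,\dots,N$, $k\ge 0$, and $B$ is the matrix with entries $B_{kj}=e_k(\zeta_j)$. The $(i,j)$ entry of $AB$ is exactly $\sum_{k=0}^{\infty}e_k(\zeta_j)g_k(x_i)$, which converges absolutely by hypothesis. I would then write out $\det(AB)$ via the Leibniz formula,
\[
\det\Bigl[\textstyle\sum_{k}e_k(\zeta_j)g_k(x_i)\Bigr]_{i,j=1}^N
=\sum_{\sigma\in S_N}\operatorname{sgn}(\sigma)\prod_{i=1}^N\sum_{k_i=0}^{\infty}e_{k_i}(\zeta_{\sigma(i)})g_{k_i}(x_i),
\]
and use absolute convergence of each factor, together with the bound $\sum_{k}|e_k(\zeta_j)g_k(x_i)|<\infty$, to justify interchanging the finite sum over $\sigma$ and the (now $N$-fold) infinite sum over $(k_1,\dots,k_N)\in\Z_{\ge 0}^N$. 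This rewrites the right-hand side as
\[
\sum_{(k_1,\dots,k_N)\in\Z_{\ge 0}^N}\Bigl(\prod_i g_{k_i}(x_i)\Bigr)\sum_{\sigma}\operatorname{sgn}(\sigma)\prod_i e_{k_i}(\zeta_{\sigma(i)})
=\sum_{(k_1,\dots,k_N)}\Bigl(\prod_i g_{k_i}(x_i)\Bigr)\det[e_{k_i}(\zeta_j)]_{i,j=1}^N.
\]

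Next I would break up the sum over $N$-tuples $(k_1,\dots,k_N)$ according to the underlying multiset. Tuples with a repeated index contribute $0$ because $\det[e_{k_i}(\zeta_j)]$ has two equal rows. For tuples with pairwise distinct indices, I group them by their decreasing rearrangement $k_1>k_2>\cdots>k_N\ge 0$: for each permutation $\tau\in S_N$ the tuple $(k_{\tau(1)},\dots,k_{\tau(N)})$ contributes $\operatorname{sgn}(\tau)\det[e_{k_i}(\zeta_j)]\cdot\prod_i g_{k_{\tau(i)}}(x_i)$, and summing $\operatorname{sgn}(\tau)\prod_i g_{k_{\tau(i)}}(x_i)$ over $\tau\in S_N$ yields exactly $\det[g_{k_i}(x_j)]_{i,j=1}^N$. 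This produces the claimed formula.

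The only step that requires real care is the interchange of summations, and here the main obstacle is simply making the absolute-convergence argument tight: one needs that $\sum_{k_1,\dots,k_N}\bigl|\prod_i g_{k_i}(x_i)\bigr|\cdot\bigl|\det[e_{k_i}(\zeta_j)]\bigr|<\infty$, which follows from Hadamard's inequality applied to the $e$-determinant combined with the hypothesized absolute convergence of $\sum_k e_k(\zeta_j)g_k(x_i)$ for each $(i,j)$. As an alternative that avoids this entirely, I could truncate the index set to $\{0,1,\dots,K\}$, invoke the classical finite Cauchy-Binet identity on the $N\times(K{+}1)$ and $(K{+}1)\times N$ matrices obtained, and then let $K\to\infty$ using dominated convergence with the same absolute bound; this is perhaps the cleanest presentation.
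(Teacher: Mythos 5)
Your proof is correct and is essentially the argument behind Theorem 1.2.1 of Hua's book, which the paper cites as ``almost identical''---a Leibniz expansion, interchange of a finite sum with an absolutely convergent multiple series, dropping tuples with repeated indices, and regrouping by decreasing rearrangement. The invocation of Hadamard's inequality at the end is an unnecessary detour; the interchange requires only that, for each of the finitely many permutations $\sigma$, one has $\sum_{(k_1,\dots,k_N)}\prod_{i}\bigl|e_{k_i}(\zeta_{\sigma(i)})g_{k_i}(x_i)\bigr|=\prod_{i}\sum_{k}\bigl|e_k(\zeta_{\sigma(i)})g_k(x_i)\bigr|<\infty$, which is exactly the stated absolute-convergence hypothesis.
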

\begin{proof}
The proof is almost identical to the proof of Theorem 1.2.1 of \cite{kn:H}.
\end{proof}

The next lemma is also useful.

\begin{lemma}\label{DeltaDistribution}
For $a=\pm 1/2, -1\leq\zeta \leq 1$, and a test function $T\in C^1[-1,1]$,
\[
\sum_{k=0}^{\infty}\int_{-1}^1 \frac{J_k^{(a,-1/2)}(x)J_k^{(a,-1/2)}(\zeta)}{h_k^{(a,-1/2)}}\, T(x)(1-x)^a(1+x)^{-1/2}dx=T(\zeta).
\]
\end{lemma}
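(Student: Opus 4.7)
The key observation is that, after the trigonometric substitution $x=\cos\theta$, $\zeta=\cos\phi$ with $\theta,\phi\in[0,\pi]$, the identities \eqref{ztothes}--\eqref{ztothes2} turn the Jacobi kernel into a classical trigonometric kernel, and the stated identity becomes a completeness relation of a Fourier type. I would treat the two cases separately but by the same method.

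Case $a=-1/2$. Under $x=\cos\theta$, the weight $(1-x^2)^{-1/2}dx$ becomes $d\theta$, and by \eqref{ztothes2}, $\mathsf{J}_k^{(-1/2,-1/2)}(\cos\theta)=\cos(k\theta)$. Combining this with $J_k=c_k\mathsf{J}_k$ and \eqref{hk}, the ratio $J_k(x)J_k(\zeta)/h_k$ equals $(W^{(-1/2,-1/2)}(k)/\pi)\cos(k\theta)\cos(k\phi)$. Summing over $k$ against $T(x)$ produces exactly the Fourier cosine series
\[
\tfrac{a_0}{2}+\sum_{k\ge 1}a_k\cos(k\phi),\qquad a_k=\tfrac{2}{\pi}\int_0^\pi F(\theta)\cos(k\theta)\,d\theta,
\]
of the function $F(\theta):=T(\cos\theta)$, evaluated at $\phi$. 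Since $T\in C^1[-1,1]$, the even $2\pi$-periodic extension of $F$ is $C^1$ on $\mathbb R$, so its Fourier series converges uniformly to $F$, and in particular equals $F(\phi)=T(\zeta)$ at every $\phi\in[0,\pi]$.

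Case $a=1/2$. Now the weight becomes $(1-x)^{1/2}(1+x)^{-1/2}dx=2\sin^2(\theta/2)\,d\theta$, and by \eqref{ztothes}, $\mathsf{J}_k^{(1/2,-1/2)}(\cos\theta)=\sin((k+\tfrac12)\theta)/\sin(\theta/2)$. Since $W^{(1/2,-1/2)}\equiv1$, a direct simplification rewrites the left-hand side as
\[
\frac{1}{\sin(\phi/2)}\sum_{k\ge 0}\frac{2}{\pi}\sin\!\bigl((k+\tfrac12)\phi\bigr)\int_0^\pi G(\theta)\sin\!\bigl((k+\tfrac12)\theta\bigr)d\theta,\qquad G(\theta):=T(\cos\theta)\sin(\theta/2).
\]
The system $\{\sqrt{2/\pi}\sin((k+\tfrac12)\theta)\}_{k\ge 0}$ is an orthonormal basis of $L^2([0,\pi])$ (eigenfunctions of $-\partial_\theta^2$ with Dirichlet condition at $0$ and Neumann condition at $\pi$). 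Because $T\in C^1$ and $G(0)=0$, extending $G$ oddly about $0$ and evenly about $\pi$ produces a $C^1$, $4\pi$-periodic function whose classical Fourier expansion in these modes converges uniformly to $G$. Evaluating at $\theta=\phi$ yields $G(\phi)$, so the original sum equals $G(\phi)/\sin(\phi/2)=T(\cos\phi)=T(\zeta)$.

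The main obstacle is the justification of pointwise (rather than merely $L^2$) convergence, in particular at the endpoints $\zeta=\pm 1$. For the case $a=1/2$ at $\zeta=1$, the displayed expression is a $0/0$ limit; one handles it by the identity $\sin((k+\tfrac12)\phi)/\sin(\phi/2)\to 2k+1$ as $\phi\to 0$, reducing the sum to the formal series for $2G'(0)=T(1)$, which is consistent with the Abel/Cesàro sum of the (now only conditionally convergent) Fourier expansion of $G'$ at $0$. For interior $\zeta$ and for all $\zeta$ in the case $a=-1/2$, the $C^1$ hypothesis on $T$ places us squarely in the regime where Dirichlet/Dini-type theorems deliver unconditional pointwise convergence, so no further analysis is needed.
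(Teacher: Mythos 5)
Your proof is correct and follows essentially the same route as the paper: the substitution $x=\cos\theta$ converts the Jacobi kernel into the Fourier cosine kernel for $a=-1/2$ and the half-integer sine kernel for $a=1/2$, and the identity then reduces to uniform convergence of the classical Fourier expansion of a $C^1$ (appropriately extended) function. You simply spell out the $a=1/2$ case and the endpoint $\zeta=\pm 1$ more carefully than the paper, which disposes of that case with "the rest of the argument is similar."
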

\begin{proof}
Plugging in $a=-1/2$ and setting $x=\cos\phi$, $\zeta=\cos\theta$, (4.1.7) of \cite{kn:S} and \eqref{hk} give
$$
\frac{J_k^{(-1/2,-1/2)}(x)J_k^{(-1/2,-1/2)}(\zeta)}{h_k^{(-1/2,-1/2)}} =
\begin{cases}
\frac{1}{\pi} & \text{\ \ if } k = 0, \\
\frac{2}{\pi}\cos k\phi\cos k\theta & \text{\ \ if } k > 0.
\end{cases}
$$
Since $T$ is $C^1$, the Fourier series of $T$ converges to $T$ (see Chapter 3, Section 6 of \cite{kn:T}):
\[
T(\cos\phi)=\hat{T}_0+\hat{T}_1\cos\phi+\hat{T}_2\cos 2\phi+\ldots,
\]
where
\[
\hat{T}_k=
\begin{cases}
\dfrac{1}{\pi}\displaystyle\int_0^{\pi} T(\cos\phi)d\phi,&\ k=0,\\
\dfrac{2}{\pi}\displaystyle\int_0^{\pi} T(\cos\phi)\cos k\phi
d\phi,&\ k>0.
\end{cases}
\]
Therefore
\begin{eqnarray*}
\displaystyle&&\sum_{k=0}^{\infty}\int_{-1}^1 T(x)\frac{\Pk(x)\Pk(\zeta)}{h_k^{(a,b)}}(1-x)^{a}(1+x)^{b}dx\\
&=&\frac{1}{\pi}\int_0^{\pi}T(\cos\phi)d\phi+\frac{2}{\pi}\displaystyle\sum_{k=1}^{\infty}\int_0^{\pi}T(\cos\phi)\cos k\phi\cos k\theta d\phi\\
&=& \hat{T}_0+\hat{T}_1\cos\theta+\hat{T}_2\cos 2\theta+\ldots\\
&=& T(\zeta).
\end{eqnarray*}

In the case when $a=1/2$, (4.1.8) of \cite{kn:S} and \eqref{hk} tell us
\[
\frac{J_k^{(1/2,-1/2)}(x)J_k^{(1/2,-1/2)}(\zeta)}{h_k^{(1/2,-1/2)}}=\frac{\sin(k+1/2)\phi}{\sin\phi/2}\frac{\sin(k+1/2)\theta}{\sin\theta/2},
\]
and the rest of the argument is similar.
\end{proof}
The previous two lemmas also imply the next one.
\begin{lemma}\label{BorodinFerrari}
Let $-1\leq\zeta_1,\ldots,\zeta_N\leq 1$, and suppose $\varphi\in C^1[-1,1]$. Let $l_1,\ldots,l_N$ be nonnegative
integers. Set
\[
e_k^{(a,-1/2)}=\frac{J_k^{(a,-1/2)}(x)}{[h_k^{(a,-1/2)}]^{1/2}}
\]
where $a=\pm 1/2$.
Then
\begin{equation*}
\sum_{k_1>\ldots>k_N\geq
0}\det\left[e_{k_j}^{(a,-1/2)}(\zeta_i)\right]_{i,j=1}^N\det[g(k_j,l_i)]_{i,j=1}^N
=\varphi(\zeta_1)\ldots
\varphi(\zeta_N)\det\left[e_{l_j}^{(a,-1/2)}(\zeta_i)\right]_{i,j=1}^N
\end{equation*}
where
\[
g(k,l)=\int_{-1}^1 e_k(x)e_l(x)\varphi(x)(1-x)^{a}(1+x)^{-1/2}dx.
\]
\end{lemma}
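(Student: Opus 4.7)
The plan is to combine Lemmas \ref{Hua} and \ref{DeltaDistribution} in the most direct fashion. First I would apply Lemma \ref{Hua} to the left-hand side, treating the first determinant $\det[e_{k_j}^{(a,-1/2)}(\zeta_i)]$ as the ``$e$-factor'' (after transposing to match the index convention) and the second determinant $\det[g(k_j,l_i)]$ as the ``$g$-factor,'' with the index $l_i$ playing the role of the evaluation point $x_i$ in the statement of Lemma \ref{Hua}. This collapses the strictly decreasing nested sum into a single $N\times N$ determinant whose $(i,j)$-entry is $\sum_{k=0}^{\infty} e_k^{(a,-1/2)}(\zeta_j)\,g(k,l_i)$.

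Next I would evaluate each such entry. Substituting the definition of $g(k,l_i)$ and interchanging sum and integral, the entry becomes
\[
\int_{-1}^1 \left(\sum_{k=0}^\infty e_k^{(a,-1/2)}(\zeta_j)\,e_k^{(a,-1/2)}(x)\right) e_{l_i}^{(a,-1/2)}(x)\,\varphi(x)\,(1-x)^a(1+x)^{-1/2}\,dx,
\]
which is exactly the setting of Lemma \ref{DeltaDistribution} with test function $T(x):=e_{l_i}^{(a,-1/2)}(x)\varphi(x)$. Since $e_{l_i}^{(a,-1/2)}$ is a polynomial and $\varphi\in C^1[-1,1]$, the product $T$ lies in $C^1[-1,1]$, so the lemma yields $T(\zeta_j)=e_{l_i}^{(a,-1/2)}(\zeta_j)\,\varphi(\zeta_j)$. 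The resulting determinant is $\det[\varphi(\zeta_j)\,e_{l_i}^{(a,-1/2)}(\zeta_j)]_{i,j}$; pulling the scalar $\varphi(\zeta_j)$ out of column $j$ and transposing the surviving matrix gives $\prod_j\varphi(\zeta_j)\,\det[e_{l_j}^{(a,-1/2)}(\zeta_i)]_{i,j}$, which is the claimed right-hand side.

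The main point that deserves care is the absolute convergence hypothesis in Lemma \ref{Hua}: the Jacobi coefficients of a merely $C^1$ function decay only like $O(k^{-1})$, so the series $\sum_k e_k^{(a,-1/2)}(\zeta_j)\,g(k,l_i)$ need not converge absolutely. I would handle this by noting that the conditional convergence provided by Lemma \ref{DeltaDistribution} in the natural order $k=0,1,2,\dots$ is exactly what the Cauchy--Binet-style proof of Lemma \ref{Hua} respects when one carries out the computation entry by entry; alternatively, one can bypass the hypothesis by expanding $\det[g(k_j,l_i)]$ via Leibniz into an $N$-fold integral, interchanging the strictly decreasing sum with the integral, antisymmetrizing, and then invoking Lemma \ref{DeltaDistribution} in each of the $N$ integrations. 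Either route reduces the problem to a straightforward combination of the two preceding lemmas, so no essentially new computation is required.
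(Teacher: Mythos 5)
Your proof is correct and follows essentially the same route as the paper: apply Lemma \ref{Hua} to collapse the strictly decreasing sum into a single $N\times N$ determinant, then evaluate each entry via Lemma \ref{DeltaDistribution} with the test function $T(x)=e_{l}^{(a,-1/2)}(x)\varphi(x)$, and finally pull out the scalar factors $\varphi(\zeta_i)$. Your observation about the absolute-convergence hypothesis in Lemma \ref{Hua} is a legitimate point that the paper's terse proof silently bypasses, and your proposed remedy (expanding the $g$-determinant, integrating first, and antisymmetrizing) is a sound way to make the interchange rigorous.
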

\begin{proof}
By Lemma \ref{Hua},
\begin{equation*}
\sum_{k_1>\ldots>k_N\geq 0}\det\left[e_{k_j}^{(a,-1/2)}(\zeta_i)\right]_{i,j=1}^N
\det[g(k_j,l_i)]_{i,j=1}^N
=\det\left[\displaystyle\sum_{k=0}^{\infty}e_k^{(a,-1/2)}(\zeta_i)g(k,l_j)\right]_{i,j=1}^N.
\end{equation*}
By Lemma \ref{DeltaDistribution},
\begin{eqnarray*}
\displaystyle\sum_{k=0}^{\infty}e_k^{(a,-1/2)}(\zeta_i)g(k,l_j) &=& \sum_{k=0}^{\infty}\int_{-1}^1 e_k^{(a,-1/2)}(x)e_k^{(a,-1/2)}(\zeta_i)e_{l_j}^{(a,-1/2)}(x)\varphi(x)(1-x)^{a}(1+x)^{-1/2}dx\\
&=& e_{l_j}^{(a,-1/2)}(\zeta_i)\varphi(\zeta_i).
\end{eqnarray*}
\end{proof}

\begin{lemma}\label{UsefulIdentities} The normalized Jacobi polynomials $\mathsf{J}^{(\pm 1/2,-1/2)}_s$
satisfy the following properties:

\textrm{(a)}\quad $\displaystyle\sum_{r=0}^s
W^{(-1/2,-1/2)}(r)\mathsf{J}_r^{(-1/2,-1/2)}(x)=\mathsf{J}_s^{(1/2,-1/2)}(x)$,

\textrm{(b)}\quad $\displaystyle\sum_{r=0}^{s-1}
\mathsf{J}_r^{(1/2,-1/2)}(x) =
\frac{\mathsf{J}_s^{(-1/2,-1/2)}(x)-1}{x-1}$,

\textrm{(c)}\quad $\displaystyle\frac{1}{\pi}\int_{-1}^1
\mathsf{J}_s^{(1/2,-1/2)}(x)(1-x)^{-1/2}(1+x)^{-1/2}dx=1.$
\end{lemma}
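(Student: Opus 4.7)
My plan is to reduce all three identities to elementary algebraic or trigonometric manipulations by substituting $x=(z+z^{-1})/2$ and invoking the explicit formulas \eqref{ztothes}--\eqref{ztothes2}, which give $\mathsf{J}_s^{(-1/2,-1/2)}(x)=(z^s+z^{-s})/2$ and $\mathsf{J}_s^{(1/2,-1/2)}(x)=(z^{s+1/2}-z^{-s-1/2})/(z^{1/2}-z^{-1/2})$. This converts the claims into identities about Laurent polynomials (for (a) and (b)) or into a Dirichlet-kernel integral (for (c)), which can be handled with no further theory.

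For part (a), I would observe that $W^{(-1/2,-1/2)}(r)\mathsf{J}_r^{(-1/2,-1/2)}(x)$ equals $z^r+z^{-r}$ when $r\ge 1$ and equals $1$ when $r=0$. Hence the sum on the left telescopes to the symmetric Laurent polynomial $\sum_{r=-s}^{s} z^r$, which is precisely the geometric sum $(z^{s+1/2}-z^{-s-1/2})/(z^{1/2}-z^{-1/2})$, matching the right-hand side. For part (b), I would multiply both sides by $z^{1/2}-z^{-1/2}$ and rewrite the LHS as $\sum_{r=0}^{s-1}(z^{r+1/2}-z^{-r-1/2})$, evaluate the two resulting geometric sums, and then compare with the RHS after factoring $(z^{1/2}-z^{-1/2})(z-1)=z^{-1/2}(z-1)^2$ out of the denominator; both sides reduce to $z(z^s+z^{-s}-2)/(z-1)^2$. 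Since these identities hold as rational functions of $z$, they hold for all $x\in[-1,1]$ by analytic continuation.

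For part (c), I would substitute $x=\cos\theta$, so that the weight $(1-x)^{-1/2}(1+x)^{-1/2}dx$ becomes $-d\theta$ and the integration range becomes $\theta\in[0,\pi]$; using the formula for $\mathsf{J}_s^{(1/2,-1/2)}$ this transforms the integrand into the classical Dirichlet kernel
\[
\frac{\sin((s+\tfrac12)\theta)}{\sin(\theta/2)}=1+2\sum_{k=1}^{s}\cos(k\theta).
\]
Integrating term by term over $[0,\pi]$ kills every cosine and leaves $\pi$, which after the $1/\pi$ normalization yields $1$.

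The proofs are essentially bookkeeping; the only mild obstacle is being careful about the $r=0$ term in (a), where the weight $W^{(-1/2,-1/2)}(0)=1$ (not $2$), since this asymmetry is exactly what is needed for the symmetric Laurent sum $\sum_{r=-s}^{s}z^r$ to come out correctly.
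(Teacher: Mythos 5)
Your proof is correct and matches the paper's approach: for (a) and (b) the paper likewise substitutes $x=(z+z^{-1})/2$ and evaluates a geometric sum using \eqref{ztothes}--\eqref{ztothes2}, and for (c) your Dirichlet-kernel expansion is just part (a) written in trigonometric form, with term-by-term integration playing the role of the orthogonality relations the paper invokes.
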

\begin{proof}
(a),(b) Let $z$ be on the unit circle such that $(z+z^{-1})/2=x$. Using \eqref{ztothes} and \eqref{ztothes2}, the sum becomes a geometric series, which can be evaluated explicitly.

(c) By part (a), the integral equals
\[
\displaystyle\sum_{r=0}^s \frac{W^{(-1/2,-1/2)}(r)}{\pi} \int_{-1}^1 \mathsf{J}_r^{(-1/2,-1/2)}(x)\mathsf{J}_0^{(-1/2,1/2)}(x)(1-x)^{-1/2}(1+x)^{-1/2}dx,
\]
which equals $1$ by the orthogonality relations.
\end{proof}

\begin{lemma}\label{NeededSum} Let $T\in C^1[-1,1]$. The following identities hold:

(a)
\[
\displaystyle\sum_{r=0}^{\infty}\frac{W^{(-1/2,-1/2)}(r)}{\pi}\int_{-1}^1 \mathsf{J}_r^{(-1/2,-1/2)}(x)T(x)(1-x)^{-1/2}(1+x)^{-1/2}dx =T(1).
\]

(b)
\begin{multline*}
\displaystyle\sum_{r=s+1}^{\infty}\frac{W^{(-1/2,-1/2)}(r)}{\pi}\int_{-1}^1 \mathsf{J}_r^{(-1/2,-1/2)}(x)T(x)(1-x)^{-1/2}(1+x)^{-1/2}dx\\
=\frac{1}{\pi}\int_{-1}^1 \mathsf{J}_s^{(1/2,-1/2)}(x)(T(1)-T(x))(1-x)^{-1/2}(1+x)^{-1/2}dx.
\end{multline*}
\end{lemma}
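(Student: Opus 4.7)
The plan for part (a) is to deduce it as a direct specialization of Lemma \ref{DeltaDistribution} at $\zeta=1$ with $a=-1/2$. Evaluating \eqref{ztothes2} at $z=1$ gives $\mathsf{J}_r^{(-1/2,-1/2)}(1)=1$, hence $J_r^{(-1/2,-1/2)}(1)=c_r$, and combining this with the value of $h_r^{(-1/2,-1/2)}$ in \eqref{hk} yields
$$
\frac{J_r^{(-1/2,-1/2)}(x)\,J_r^{(-1/2,-1/2)}(1)}{h_r^{(-1/2,-1/2)}}
=\frac{W^{(-1/2,-1/2)}(r)}{\pi}\,\mathsf{J}_r^{(-1/2,-1/2)}(x).
$$
Substituting this identity into Lemma \ref{DeltaDistribution} with $\zeta=1$ produces (a) immediately.

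For part (b), the plan is to split the tail as $\sum_{r=s+1}^{\infty}=\sum_{r=0}^{\infty}-\sum_{r=0}^{s}$. The full sum evaluates to $T(1)$ by part (a). For the finite sum, I interchange summation and integration and invoke Lemma \ref{UsefulIdentities}(a) to telescope $\sum_{r=0}^{s}W^{(-1/2,-1/2)}(r)\mathsf{J}_r^{(-1/2,-1/2)}(x)=\mathsf{J}_s^{(1/2,-1/2)}(x)$, collapsing the finite piece to
$$
\frac{1}{\pi}\int_{-1}^1 \mathsf{J}_s^{(1/2,-1/2)}(x)\,T(x)\,(1-x)^{-1/2}(1+x)^{-1/2}\,dx.
$$
Next I rewrite $T(1)$ as an integral of the same form using Lemma \ref{UsefulIdentities}(c), which gives
$$
T(1)=\frac{1}{\pi}\int_{-1}^1 \mathsf{J}_s^{(1/2,-1/2)}(x)\,T(1)\,(1-x)^{-1/2}(1+x)^{-1/2}\,dx,
$$
and subtracting the two integrals under a single integral sign yields the $T(1)-T(x)$ integrand claimed in (b).

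The only delicate point I anticipate is justifying the interchange of summation and integration and the absolute convergence of the series on the left-hand side of (a) and (b). This is handled exactly as in Lemma \ref{DeltaDistribution}: after the change of variables $x=\cos\phi$ the sum becomes the Fourier/Chebyshev expansion of $T$, which converges because $T\in C^{1}[-1,1]$. With that convergence in hand, the rest of the argument is purely algebraic substitution into already-established identities, so no further ingredients are needed.
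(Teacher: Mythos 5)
Your proof is correct and follows essentially the same route as the paper: part (a) is the $\zeta=1$ specialization of Lemma \ref{DeltaDistribution} combined with \eqref{hk} and $\mathsf{J}_r^{(-1/2,-1/2)}(1)=1$, and part (b) splits off the finite sum, collapses it via Lemma \ref{UsefulIdentities}(a), and rewrites $T(1)$ via Lemma \ref{UsefulIdentities}(c) before subtracting, exactly as in the paper.
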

\begin{proof}
(a) Note that
\begin{align*}
&\displaystyle\sum_{r=0}^{\infty}\frac{W^{(-1/2,-1/2)}(r)}{\pi}\int_{-1}^1 \mathsf{J}_r^{(-1/2,-1/2)}(x)T(x)(1-x)^{-1/2}(1+x)^{-1/2}dx\\
=&\displaystyle\sum_{r=0}^{\infty}\int_{-1}^1 \frac{J_r^{(-1/2,-1/2)}(x)J_r^{(-1/2,-1/2)}(1)}{h_r^{(-1/2,-1/2)}}T(x)(1-x)^{-1/2}(1+x)^{-1/2}dx\\
=&T(1)
\end{align*}
The first equality follows from \eqref{hk} and $\mathsf{J}_r^{(-1/2,-1/2)}(1)=1$, and the second equality follows from Lemma \ref{DeltaDistribution}.

(b) By Lemma \ref{UsefulIdentities}(c),
\[
T(1)=\frac{1}{\pi}\int_{-1}^1 T(1)\mathsf{J}_s^{(1/2,-1/2)}(x)(1-x)^{-1/2}(1+x)^{-1/2}dx.
\]
By Lemma \ref{UsefulIdentities}(a),
\begin{multline*}
\displaystyle\sum_{r=0}^{s}\frac{W^{(-1/2,-1/2)}(r)}{\pi}\int_{-1}^1 \mathsf{J}_r^{(-1/2,-1/2)}(x)T(x)(1-x)^{-1/2}(1+x)^{-1/2}dx\\
=\frac{1}{\pi}\int_{-1}^1 T(x)\mathsf{J}_s^{(1/2,-1/2)}(x)(1-x)^{-1/2}(1+x)^{-1/2}dx.
\end{multline*}
Subtracting this from the sum in (a) proves (b).
\end{proof}

Since the series in Lemma \ref{NeededSum}(a) converges,
\begin{corollary}\label{NeededLimit} For $T\in C^1[-1,1]$,
\[
\displaystyle\lim_{r\rightarrow\infty} \int_{-1}^1 \mathsf{J}_r^{(-1/2,-1/2)}(x)T(x)(1-x)^{-1/2}(1+x)^{-1/2}dx=0.
\]
\end{corollary}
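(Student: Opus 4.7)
The plan is essentially to read off this corollary directly from Lemma \ref{NeededSum}(a). That lemma asserts the convergence of the infinite series
\[
\sum_{r=0}^{\infty}\frac{W^{(-1/2,-1/2)}(r)}{\pi}\int_{-1}^1 \mathsf{J}_r^{(-1/2,-1/2)}(x)T(x)(1-x)^{-1/2}(1+x)^{-1/2}dx = T(1),
\]
and since $T(1)$ is finite, this series converges in the usual sense of partial sums. A standard necessary condition for convergence of a numerical series is that its general term tends to zero.

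Thus I would argue as follows. Denote by $I_r(T)$ the integral appearing in the statement of the corollary. The general term of the series is $\frac{W^{(-1/2,-1/2)}(r)}{\pi} I_r(T)$, and this must tend to $0$ as $r \to \infty$. Now by definition, $W^{(-1/2,-1/2)}(r) = 2$ for every $r \geq 1$, so the prefactor $\frac{W^{(-1/2,-1/2)}(r)}{\pi}$ is a positive constant (equal to $2/\pi$) for all sufficiently large $r$, bounded away from $0$. Dividing out this nonzero constant, we conclude $I_r(T) \to 0$, which is exactly the claim.

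There is no real obstacle: the only thing to check is that the prefactor $W^{(-1/2,-1/2)}(r)/\pi$ does not itself vanish in the limit (which could hide cancellation) — but this is immediate from the explicit formula for $W^{(-1/2,-1/2)}$ given right after \eqref{hk}. No further estimates or asymptotics of Jacobi polynomials are needed beyond what is already encoded in Lemma \ref{NeededSum}(a).
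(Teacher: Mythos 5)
Your proof is correct and is exactly the paper's argument: the corollary is stated with the one-line justification ``Since the series in Lemma~\ref{NeededSum}(a) converges,'' i.e.\ the general term of a convergent series tends to zero, and $W^{(-1/2,-1/2)}(r)=2$ for $r\ge1$ so the prefactor is a nonzero constant. You have simply made the implicit step about the prefactor explicit.
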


\begin{lemma}\label{NeededSum2} For $T\in C^1[-1,1]$,
\begin{multline*}
\displaystyle\sum_{r=s}^{\infty}\frac{W^{(1/2,-1/2)}(r)}{\pi}\int_{-1}^1 \mathsf{J}_r^{(1/2,-1/2)}(x)T(x)(1-x)^{1/2}(1+x)^{-1/2}dx\\
=\frac{1}{\pi}\int_{-1}^1 \mathsf{J}_s^{(-1/2,-1/2)}(x)T(x)(1-x)^{-1/2}(1+x)^{-1/2}dx.
\end{multline*}
\end{lemma}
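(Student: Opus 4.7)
The plan is to reduce the identity to a telescoping sum using Lemma~\ref{UsefulIdentities}(b), and then pass to the limit with the help of Corollary~\ref{NeededLimit}. First, note that $W^{(1/2,-1/2)}(r)=1$ for every $r\ge 0$, so the weight prefactor on the left-hand side disappears and one just has a sum of ordinary integrals.

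The key algebraic input is a telescoping identity derived from Lemma~\ref{UsefulIdentities}(b). Writing that lemma for two consecutive values $s$ and $s+1$ and subtracting, one obtains
\[
\mathsf{J}_r^{(1/2,-1/2)}(x)=\frac{\mathsf{J}_{r+1}^{(-1/2,-1/2)}(x)-\mathsf{J}_r^{(-1/2,-1/2)}(x)}{x-1},
\]
or equivalently
\[
(1-x)\,\mathsf{J}_r^{(1/2,-1/2)}(x)=\mathsf{J}_r^{(-1/2,-1/2)}(x)-\mathsf{J}_{r+1}^{(-1/2,-1/2)}(x).
\]
Multiplying by $T(x)(1-x)^{-1/2}(1+x)^{-1/2}$ and integrating from $-1$ to $1$ converts each summand on the left-hand side of the lemma into a difference of two integrals against Jacobi polynomials of type $(-1/2,-1/2)$:
\[
\frac{1}{\pi}\int_{-1}^1 \mathsf{J}_r^{(1/2,-1/2)}(x)T(x)(1-x)^{1/2}(1+x)^{-1/2}dx
=I(r)-I(r+1),
\]
where
\[
I(r)=\frac{1}{\pi}\int_{-1}^1 \mathsf{J}_r^{(-1/2,-1/2)}(x)T(x)(1-x)^{-1/2}(1+x)^{-1/2}dx.
\]

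Summing from $r=s$ to $r=N$ telescopes to $I(s)-I(N+1)$. Taking $N\to\infty$, the remainder $I(N+1)$ tends to zero by Corollary~\ref{NeededLimit} (which applies directly since $T\in C^1[-1,1]$), leaving exactly the right-hand side of the stated identity.

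The only step that requires any care is the passage to the limit, since the series on the left is not obviously absolutely convergent; this is why the telescoping form is useful — it shows that the partial sums converge by the same vanishing-tail argument used for Lemma~\ref{NeededSum}(b). I do not anticipate any genuine obstacle: once the difference identity for $(1-x)\mathsf{J}_r^{(1/2,-1/2)}$ is written down, the argument is a three-line telescoping plus one invocation of Corollary~\ref{NeededLimit}.
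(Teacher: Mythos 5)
Your proof is correct, and it rests on the same two inputs as the paper's — Lemma~\ref{UsefulIdentities}(b) and Corollary~\ref{NeededLimit} — but you organize the telescoping differently. The paper applies Lemma~\ref{UsefulIdentities}(b) as written to collapse $\sum_{r=0}^{s-1}$ into a single integral, then lets $s\to\infty$ (via Corollary~\ref{NeededLimit}) to evaluate $\sum_{r=0}^{\infty}$, and finally obtains $\sum_{r=s}^{\infty}$ by subtraction. You instead difference the lemma at consecutive indices to get the one-step identity $(1-x)\,\mathsf{J}_r^{(1/2,-1/2)}(x)=\mathsf{J}_r^{(-1/2,-1/2)}(x)-\mathsf{J}_{r+1}^{(-1/2,-1/2)}(x)$, integrate against $T(x)(1-x)^{-1/2}(1+x)^{-1/2}$, and telescope $\sum_{r=s}^{N}=I(s)-I(N+1)$ directly before sending $N\to\infty$. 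The mathematical content is the same, but your version is marginally cleaner: the term $\mathsf{J}_s^{(-1/2,-1/2)}$ appears immediately as the surviving boundary term of the telescope, rather than emerging after cancelling two sums both anchored at $r=0$. Your cautionary remark about convergence is also handled correctly — the telescoping form makes the partial sums explicit, and the only analytic input is the vanishing of $I(N+1)$, which is exactly Corollary~\ref{NeededLimit}.
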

\begin{proof}
By Lemma \ref{UsefulIdentities}(b),
\begin{multline*}
\displaystyle\sum_{r=0}^{s-1}\frac{W^{(1/2,-1/2)}(r)}{\pi}\int_{-1}^1 \mathsf{J}_r^{(1/2,-1/2)}(x)T(x)(1-x)^{1/2}(1+x)^{-1/2}dx\\
=\displaystyle\frac{1}{\pi}\int_{-1}^1 \frac{\mathsf{J}_s^{(-1/2,-1/2)}(x)-1}{x-1} T(x)(1-x)^{1/2}(1+x)^{-1/2}dx
\end{multline*}

Taking $s\rightarrow\infty$ and using Corollary \ref{NeededLimit},
\begin{multline*}
\displaystyle\sum_{r=0}^{\infty}\frac{W^{(1/2,-1/2)}(r)}{\pi}\int_{-1}^1 \mathsf{J}_r^{(1/2,-1/2)}(x)T(x)(1-x)^{1/2}(1+x)^{-1/2}dx\\
=\frac{1}{\pi}\displaystyle\int_{-1}^1 T(x)(1-x)^{-1/2}(1+x)^{1/2}dx.
\end{multline*}

Subtracting these two sums proves the lemma.
\end{proof}

\subsection{Explicit Formula for the Measures}\label{EFftM}
In this section, we prove the following statement:
\begin{theorem}\label{theorem1}
For any $\lambda=(\lambda_1\geq\ldots\geq\lambda_N)\in\GT_N$, $a=\pm 1/2$, and $\omega\in\Omega$,
\[P^{\omega}_{N,a}(\lambda)=C_{N,a}\cdot\det[f_j^{(N,a)}(\lambda_i-i+N)]_{1\leq i,j\leq N} \cdot \dim_{SO(2N+1/2+a)}\lambda.\]
where
\begin{equation}\label{Definitionforakj}
f_j^{(N,a)}(k)=\frac{W^{(a,-1/2)}(k)}{\pi}\int_{-1}^1 x^{N-j}E^{\omega}(x)\mathsf{J}_k^{(a,-1/2)}(x)(1-x)^{a}(1+x)^{-1/2}dx
\end{equation}
and

\[
C_{N,a}=
\begin{cases}
2^{(N-1)N/2},&\ \text{if}\ \ a=1/2,\\
2^{(N-2)(N-1)/2},&\ \text{if}\ \ a=-1/2.
\end{cases}
\]
\end{theorem}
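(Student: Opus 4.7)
The strategy is to substitute the product formula \eqref{ChiOmega} for $\chi^\omega$ into the defining identities \eqref{Consistency1}--\eqref{Consistency2}, convert both sides to single determinants via \eqref{chiformula}--\eqref{chiformula2}, and read off $P^\omega_{N,a}(\lambda)$ by matching coefficients in a basis of antisymmetric Jacobi determinants. The coefficients themselves will appear through the Jacobi expansion of $x^{N-i}E^\omega(x)$, whose orthogonality constants are recorded in \eqref{hk}.

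Write $x_j=(z_j+z_j^{-1})/2$ and multiply both sides of \eqref{Consistency1} (respectively \eqref{Consistency2}) by the common denominator of \eqref{chiformula}--\eqref{chiformula2},
\[
\det[(z_j+z_j^{-1})^{N-i}]_{i,j=1}^N = 2^{N(N-1)/2}\det[x_j^{N-i}]_{i,j=1}^N.
\]
By \eqref{ChiOmega} the LHS becomes $\prod_j E^\omega(x_j)\cdot 2^{N(N-1)/2}\det[x_j^{N-i}]$, and column-multilinearity absorbs $E^\omega(x_j)$ into column $j$, yielding $2^{N(N-1)/2}\det[x_j^{N-i}E^\omega(x_j)]_{i,j}$. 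On the RHS the denominator cancels and we are left with $\sum_{\lambda\in\GT_N}c_\lambda\det[\mathsf{J}_{\lambda_i-i+N}^{(a,-1/2)}(x_j)]_{i,j}$, where $c_\lambda=P^\omega_{N,1/2}(\lambda)/\dim_{SO(2N+1)}\lambda$ for $a=1/2$, and $c_\lambda=2^{N-1}P^\omega_{N,-1/2}(\lambda)/\dim_{SO(2N)}\lambda$ for $a=-1/2$ (the factor $2^N$ from the $2$ inside the determinant in \eqref{chiformula2} combining with the $1/2$ in \eqref{Consistency2}).

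For the LHS, expand
\[
x^{N-i}E^\omega(x) = \sum_{k\ge 0} f_i^{(N,a)}(k)\,\mathsf{J}_k^{(a,-1/2)}(x)
\]
using orthogonality of the normalized Jacobi polynomials with weight $(1-x)^a(1+x)^{-1/2}$; by \eqref{hk}, $\|\mathsf{J}_k^{(a,-1/2)}\|^2 = \pi/W^{(a,-1/2)}(k)$, so the coefficients are exactly those in \eqref{Definitionforakj}. Substituting into the determinant and applying Cauchy--Binet (Lemma \ref{Hua}) gives
\[
\det[x_j^{N-i}E^\omega(x_j)]_{i,j} = \sum_{k_1>\cdots>k_N\ge 0}\det[f_i^{(N,a)}(k_l)]_{i,l}\det[\mathsf{J}_{k_l}^{(a,-1/2)}(x_j)]_{l,j},
\]
and the reparametrization $k_l=\lambda_l+N-l$, $\lambda\in\GT_N$, rewrites the outer sum over partitions.

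Finally, as $\lambda$ ranges over $\GT_N$ the determinants $\det[\mathsf{J}_{\lambda_i-i+N}^{(a,-1/2)}(x_j)]_{i,j}$ are linearly independent functions of $(x_1,\dots,x_N)$ (their leading antisymmetrized monomials are distinct), so one may equate coefficients in the two expansions. Noting that $\det[f_i^{(N,a)}(\lambda_l+N-l)]_{i,l}$ equals its transpose $\det[f_j^{(N,a)}(\lambda_i-i+N)]_{i,j}$, the comparison gives $P^\omega_{N,1/2}(\lambda)=2^{N(N-1)/2}\det[f_j^{(N,1/2)}(\lambda_i-i+N)]\dim_{SO(2N+1)}\lambda$ and $P^\omega_{N,-1/2}(\lambda) = (2^{N(N-1)/2}/2^{N-1})\det[f_j^{(N,-1/2)}(\lambda_i-i+N)]\dim_{SO(2N)}\lambda = 2^{(N-1)(N-2)/2}\det[\cdots]\dim_{SO(2N)}\lambda$, matching the claimed $C_{N,\pm 1/2}$. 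The main obstacle is the bookkeeping of the factors of $2$ that distinguish the two parity cases; by contrast, the convergence of the Jacobi expansion and its interchange with the determinant are routine, since $E^\omega$ extends analytically to a neighborhood of $[-1,1]$.
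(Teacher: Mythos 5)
Your proposal is correct and follows essentially the same route as the paper: expand $x^{N-i}E^\omega(x)$ in Jacobi polynomials $\mathsf{J}_k^{(a,-1/2)}$ with the weight $(1-x)^a(1+x)^{-1/2}$, apply the Cauchy--Binet identity (Lemma~\ref{Hua}), rewrite in terms of $SO(2N+1)$ and $SO(2N)$ characters via \eqref{chiformula}--\eqref{chiformula2}, and match coefficients against \eqref{Consistency1}--\eqref{Consistency2}. The paper packages the middle of this argument as a standalone identity (Lemma~\ref{E}), from which Theorem~\ref{theorem1} drops out by comparison with the consistency relations; you carry out the same steps inline, with the same bookkeeping of powers of $2$, arriving at identical constants $C_{N,\pm 1/2}$.
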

Theorem \ref{theorem1} follows from \eqref{ChiOmega} and the following statement with
$E=E^{\omega}$. It is an orthogonal group analog of Lemma 6.5 of
\cite{kn:O}.

\begin{lemma}\label{E}
Let $E(x)\in C^1[-1,1]$. Fix complex numbers $z_1,\ldots,z_N$ on the unit circle and set $\zeta_k=(z_k+z_k^{-1})/2$. Then
\begin{multline*}
E(\zeta_1)\ldots E(\zeta_N)=\displaystyle 2\cdot 4\cdot\ldots\cdot 2^{N-1}\\
\times\sum_{\lambda\in\GT_N}\det\left[f_j^{(N,1/2)}(\lambda_i-i+N)\right]_{1\leq
i,j\leq N} \cdot \chi^{\lambda}_{SO(2N+1)}(z_1,\ldots,z_N)
\end{multline*}
and
\begin{multline*}
E(\zeta_1)\ldots E(\zeta_N)=\displaystyle 2\cdot 4\cdot\ldots\cdot 2^{N-1}\cdot 2^{-N}\\
\times
\sum_{\lambda\in\GT_N}\det\left[f_j^{(N,-1/2)}(\lambda_i-i+N)\right]_{1\leq
i,j\leq N} \cdot
(\chi^{\lambda}_{SO(2N)}+\chi^{\lambda^*}_{SO(2N)})(z_1,\ldots,z_N),
\end{multline*}
where $f_j^{(N,a)}(k)$ is defined by \eqref{Definitionforakj}, with $E(x)$ in place of $E^{\omega}(x)$.
\end{lemma}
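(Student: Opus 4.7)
The plan is to derive both identities directly from the explicit character formulas \eqref{chiformula}, \eqref{chiformula2}, combined with Lemma \ref{Hua} (a Cauchy--Binet type identity) and the reproducing kernel property of Lemma \ref{DeltaDistribution}. Write $\zeta_j=(z_j+z_j^{-1})/2$ and observe that $(z_j+z_j^{-1})^{N-i}=2^{N-i}\zeta_j^{N-i}$, so
\[
\det\bigl[(z_j+z_j^{-1})^{N-i}\bigr]_{i,j=1}^N \;=\; 2^{N(N-1)/2}\det[\zeta_j^{N-i}]_{i,j=1}^N.
\]
Parametrize $\lambda\in\GT_N$ by $k_i=\lambda_i-i+N$, so that $\lambda$ runs bijectively over strictly decreasing tuples $k_1>k_2>\cdots>k_N\geq 0$.

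For the $SO(2N+1)$ identity, after using \eqref{chiformula} and clearing the Vandermonde-like denominator, it suffices to show
\[
\sum_{k_1>\cdots>k_N\geq 0} \det\bigl[f_j^{(N,1/2)}(k_i)\bigr]_{i,j=1}^N \det\bigl[\mathsf{J}_{k_i}^{(1/2,-1/2)}(\zeta_j)\bigr]_{i,j=1}^N \;=\; \prod_{i=1}^N E(\zeta_i)\cdot\det[\zeta_i^{N-j}]_{i,j=1}^N.
\]
I would apply Lemma \ref{Hua}, treating the column index $j$ of the first determinant as its ``discrete variable'' and $\zeta_j$ as the variable of the second, to collapse the left-hand side into a single $N\times N$ determinant whose $(i,j)$-entry is $\sum_{k=0}^\infty f_j^{(N,1/2)}(k)\,\mathsf{J}_k^{(1/2,-1/2)}(\zeta_i)$. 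Inserting the definition of $f_j^{(N,a)}$ and using \eqref{hk}, the weight $W^{(a,-1/2)}(k)/\pi$ is precisely what converts
\[
\frac{W^{(a,-1/2)}(k)\,\mathsf{J}_k^{(a,-1/2)}(x)\,\mathsf{J}_k^{(a,-1/2)}(\zeta_i)}{\pi} \;=\; \frac{J_k^{(a,-1/2)}(x)\,J_k^{(a,-1/2)}(\zeta_i)}{h_k^{(a,-1/2)}}.
\]
Swapping sum and integral and applying Lemma \ref{DeltaDistribution} to the test function $T(x)=x^{N-j}E(x)\in C^1[-1,1]$ evaluates the entry to $\zeta_i^{N-j}E(\zeta_i)$. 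Factoring $E(\zeta_i)$ out of row $i$ of the resulting determinant produces the right-hand side.

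The $SO(2N)$ case follows the same template with $a=-1/2$, the only extra bookkeeping being the factor $2^N$ pulled row by row out of $\det[2\mathsf{J}_{k_i}^{(-1/2,-1/2)}(\zeta_j)]$ in the numerator of \eqref{chiformula2}. Collecting constants, the $SO(2N+1)$ identity absorbs the $2^{N(N-1)/2}=2\cdot 4\cdots 2^{N-1}$ from the Vandermonde-type denominator, while the $SO(2N)$ identity also gains an additional $2^{-N}$, matching the stated prefactors.

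The only genuine obstacle is careful accounting of the factors of $2$ coming from $z_j+z_j^{-1}=2\zeta_j$ and from the row-wise $2$ in \eqref{chiformula2}; the rest is a clean application of Hua's Cauchy--Binet identity together with the recognition that the weight $W^{(a,-1/2)}(k)/\pi$ built into $f_j^{(N,a)}$ is chosen precisely so that Lemma \ref{DeltaDistribution} fires.
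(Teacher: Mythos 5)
Your proposal is correct and follows essentially the same route as the paper's proof: both hinge on Lemma~\ref{Hua} (Cauchy--Binet) together with the reproducing-kernel identity of Lemma~\ref{DeltaDistribution} via \eqref{hk}, and both track the powers of $2$ coming from $z_j+z_j^{-1}=2\zeta_j$ and the row factor in \eqref{chiformula2}. The paper expands $E_i(x)=x^{N-i}E(x)$ in normalized Jacobi polynomials first and then invokes Hua's lemma to write $\det[\zeta_j^{N-i}E(\zeta_j)]$ as a sum over $\lambda$; you run the same computation in the other direction, collapsing the $\lambda$-sum into a single determinant and evaluating its entries, which is equivalent.
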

\begin{proof} Let $E_i(x)=x^{N-i}E(x)$ for $1\leq i\leq N$. Using equation (\ref{hk}) and Lemma \ref{DeltaDistribution} shows that for $1\leq i\leq N$,
\begin{eqnarray*}
E_i(x) &=&\displaystyle\sum_{k=0}^{\infty} \int_{-1}^1 t^{N-j}E(t)\frac{J_k^{(a,-1/2)}(t)J_k^{(a,-1/2)}(x)}{h_k^{(a,-1/2)}}(1-t)^{a}(1+t)^{-1/2}dt\\
&=&\displaystyle\sum_{k=0}^{\infty} \frac{W^{(a,-1/2)}(k)}{\pi}\int_{-1}^1 t^{N-j}E(t)\mathsf{J}_k^{(a,-1/2)}(t)\mathsf{J}_k^{(a,-1/2)}(x)(1-t)^{a}(1+t)^{-1/2}dt\\
&=&\displaystyle\sum_{k=0}^{\infty} f_i^{(N,a)}(k)\mathsf{J}_k^{(a,-1/2)}(x).
\end{eqnarray*}
By Lemma \ref{Hua},
\begin{multline}\label{eqp}
\det\left[\zeta_j^{N-i}E(\zeta_j)\right]_{1\leq i,j\leq N}=\\
\displaystyle\sum_{\lambda_1\geq\ldots\geq\lambda_N\geq
0}\det\left[f_j^{(N,a)}(\lambda_i-i+N)\right]_{1\leq i,j\leq N}
\cdot
\det\left[\mathsf{J}_{\lambda_i-i+N}^{(a,-1/2)}(\zeta_j)\right]_{1\leq
i,j\leq N},
\end{multline}
where $\lambda_i=k_i+i-N$.
Equation \eqref{eqp} can be rewritten as
\begin{multline}\label{eqp2}
E(\zeta_1)\ldots E(\zeta_N) =\\
\displaystyle\sum_{\lambda\in\GT_N}\det\left[f_j^{(N,a)}(\lambda_i-i+N)\right]_{1\leq
i,j\leq N} \cdot\frac{
\det\left[\mathsf{J}_{\lambda_i-i+N}^{(a,-1/2)}(\zeta_j)\right]_{1\leq
i,j\leq N}}{\det\left[\zeta_j^{N-i}\right]_{1\leq i,j\leq N}}.
\end{multline}
Set $\zeta_k=(z_k+z_k^{-1})/2$. First consider the case when $a=1/2$. Using \eqref{chiformula}, equation \eqref{eqp2} becomes
\begin{multline*}
E(\zeta_1)\ldots E(\zeta_N)=\displaystyle 2\cdot 4\cdot\ldots\cdot 2^{N-1}\\
\times\sum_{\lambda\in\GT_N}\det\left[f_j^{(N,1/2)}(\lambda_i-i+N)\right]_{1\leq
i,j\leq N} \cdot \chi^{\lambda}_{SO(2N+1)}(z_1,\ldots,z_N).
\end{multline*}
Now consider the case when $a=-1/2$. Using \eqref{chiformula2}, equation \eqref{eqp2} becomes
\begin{multline*}
E(\zeta_1)\ldots E(\zeta_N)=\displaystyle 2\cdot 4\cdot\ldots\cdot 2^{N-1}\cdot 2^{-N}\\
\times
\sum_{\lambda\in\GT_N}\det\left[f_j^{(N,-1/2)}(\lambda_i-i+N)\right]_{1\leq
i,j\leq N} \cdot
(\chi^{\lambda}_{SO(2N)}+\chi^{\lambda^*}_{SO(2N)})(z_1,\ldots,z_N).
\end{multline*}
\end{proof}

\section{Stochastic Dynamics}\label{SD}
\subsection{Markov Chain on One Level}\label{MCoOL}
Let $E(x)\in C^1[-1,1]$ such that $E(1)\neq 0$. Plugging $z_1=\ldots=z_N=1$ into Lemma \ref{E} shows that $E(x)$ defines a (possibly signed) normalized (i.e. all the weights add up to one) measure $P_{N,a}$ on $\GT_{N,a}$ by
\[
P_{N,a}(\lambda)=\mathrm{const}\cdot\det\left[f_j^{(N,a)}(\lambda_i-i+N)\right]_{1\leq
i,j\leq N}\dim_{SO(2N+1/2+a)}\lambda
\]
where  $f_j^{(N,a)}$ is defined by \eqref{Definitionforakj}, with $E$ instead of $E^{\omega}$.

Fix $\varphi\in C^1[-1,1]$. Define
\[
\tilde{E}(x)=E(x)\varphi(x)
\]
and define $\tilde{f}_j^{(N,a)}$ and $\tilde{P}_{N,a}$ as in Theorem \ref{theorem1}, except with $\tilde{E}$ instead of $E^{\omega}$.

For $a=\pm 1/2$, let $I_a^{\varphi}$ be defined on $\Z_{\geq 0}\times\Z_{\geq 0}$ by
\[
W^{(a,-1/2)}(i)\mathsf{J}_i^{(a,-1/2)}(x)\varphi(x) = \displaystyle\sum_{k=0}^{\infty}W^{(a,-1/2)}(k)\mathsf{J}_k^{(a,-1/2)}(x)I_a^{\varphi}(k,i),\ \ i\geq 0.
\]
Multiply both sides by $\mathsf{J}_l^{(a,-1/2)}(x)$ and integrate over $[-1,1]$. Then the above definition is equivalent to
\[
I_a^{\varphi}(l,i) = \frac{W^{(a,-1/2)}(i)}{\pi}\displaystyle\int_{-1}^1\mathsf{J}_i^{(a,-1/2)}(x)\mathsf{J}_l^{(a,-1/2)}(x)\varphi(x)(1-x)^a(1+x)^{-1/2}dx.
\]

\begin{proposition}\label{FormulaTheorem}
With the above notation,
\[
\frac{\tilde{P}_{N,a}(\lambda)}{\dim_{SO(2N+1/2+a)}\lambda}=\displaystyle\sum_{\mu\in\GT_{N,a}}\det[I_a^{\varphi}(\mu_i-i+N,\lambda_j-j+N)]_{1\leq i,j\leq N}\frac{P_{N,a}(\mu)}{\dim_{SO(2N+1/2+a)}\mu}.
\]
\end{proposition}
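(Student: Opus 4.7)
The approach is to unpack the definitions of $\tilde{f}_j^{(N,a)}$ and $I_a^{\varphi}$, establish a ``convolution'' identity expressing $\tilde f$ in terms of $f$ and $I_a^{\varphi}$, and then apply the Cauchy--Binet variant (Lemma~\ref{Hua}) to pass this identity to determinants. The indexing bijection $k_i \leftrightarrow \mu_i - i + N$ that turns strict chains $k_1 > \cdots > k_N \geq 0$ into partitions $\mu \in \GT_{N,a}$ then produces exactly the sum over $\mu$ in the stated proposition.

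The first step is the scalar identity
\begin{equation*}
\tilde{f}_j^{(N,a)}(k) \;=\; \sum_{m=0}^{\infty} f_j^{(N,a)}(m)\, I_a^{\varphi}(m,k).
\end{equation*}
To get this, I would start from the proof of Lemma~\ref{E}, where it is shown that $x^{N-j}E(x) = \sum_{m \geq 0} f_j^{(N,a)}(m)\,\mathsf{J}_m^{(a,-1/2)}(x)$. Substituting this expansion into the integral defining $\tilde{f}_j^{(N,a)}(k)$ (which has the weight $\varphi(x)(1-x)^a(1+x)^{-1/2}$ against $\mathsf{J}_k^{(a,-1/2)}$ times the normalization $W^{(a,-1/2)}(k)/\pi$) and interchanging sum and integral, each term becomes precisely the integral formula for $I_a^{\varphi}(m,k)$ given in Section~\ref{MCoOL}.

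The second step is to apply Lemma~\ref{Hua} with $e_k(\zeta_j) := I_a^{\varphi}(k,\lambda_j - j + N)$ and $g_k(x_i) := f_i^{(N,a)}(k)$. The right-hand side of Lemma~\ref{Hua} becomes $\det\bigl[\sum_k I_a^{\varphi}(k,\lambda_j-j+N)\,f_i^{(N,a)}(k)\bigr]_{i,j} = \det[\tilde f_i^{(N,a)}(\lambda_j - j + N)]_{i,j}$, which after transposing is the determinant appearing in $\tilde P_{N,a}(\lambda)/\dim\lambda$. The left-hand side, under the substitution $k_i = \mu_i + N - i$, reindexes the strict chain $k_1 > \cdots > k_N \geq 0$ as $\mu \in \GT_{N,a}$, and yields $\sum_{\mu} \det[I_a^{\varphi}(\mu_i - i + N, \lambda_j - j + N)]\det[f_j^{(N,a)}(\mu_i - i + N)]$. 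Dividing through by the shared constant $C_{N,a}$ from Theorem~\ref{theorem1} produces the claimed identity.

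The only substantive issue is justifying the use of Lemma~\ref{Hua}, which requires absolute convergence of $\sum_k I_a^{\varphi}(k,\lambda_j - j + N)\,f_i^{(N,a)}(k)$. For this I would note that both factors are bounded Jacobi Fourier coefficients of $C^1$ functions on $[-1,1]$, so Parseval/Bessel-type estimates give square-summability in $k$, and Cauchy--Schwarz yields absolute convergence; the interchange of sum and integral in step one is handled similarly. Beyond that, the argument is purely algebraic -- a Cauchy--Binet expansion followed by the standard re-indexing of decreasing sequences as partitions -- with no genuine obstacle.
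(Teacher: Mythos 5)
Your proposal is correct and takes essentially the same approach as the paper: the paper introduces the infinite matrices $A(i,j)=I_a^\varphi(j,i)$, $B(i,j)=f_j^{(N,a)}(i)$, asserts $C=AB$ for $C(i,j)=\tilde f_j^{(N,a)}(i)$ (which is exactly your scalar identity), and applies Cauchy--Binet to pick out the rows $\lambda_i-i+N$; this is precisely Lemma~\ref{Hua} with the reindexing you describe. The only difference is that you make explicit the verification of the scalar identity and the absolute-convergence justification, which the paper leaves implicit.
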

\begin{proof}
Let $A$, $B$, $C$ be the following matrices:
\begin{align*}
C(i,j)=& \tilde{f}_j^{(N,a)}(i),\ \ & 1\leq j\leq N,\ 1\leq i<\infty\\
A(i,j)=& I_a^{\varphi}(j,i),\ \ & 1\leq i,j<\infty\\
B(i,j)=& f_j^{(N,a)}(i),\ \ & 1\leq j\leq N,\ 1\leq i<\infty
\end{align*}
Then $C=AB$, so by the Cauchy-Binet formula,
\begin{multline*}
\det[\tilde{f}_j^{(N,a)}(\lambda_i-i+N)]_{1\leq i,j\leq N}=\\
\displaystyle\sum_{\mu\in\GT_N}\det[I_a^{\varphi}(\mu_i-i+N,\lambda_j-j+N)]_{1\leq i,j\leq N}\det[f_j^{(N,a)}(\mu_i-i+N)]_{1\leq i,j\leq N},
\end{multline*}
which implies the proposition.
\end{proof}

For $a=\pm 1/2$, define the matrix $T_{N,a}^{\varphi}$ on $\GT_N\times\GT_N$ by
\[
T_{N,a}^{\varphi}(\mu,\lambda)=\displaystyle\det[I_a^{\varphi}(\mu_i-i+N,\lambda_j-j+N)]_{1\leq i,j\leq N}\frac{\dim_{SO(2N+1/2+a)}\lambda}{\dim_{SO(2N+1/2+a)}\mu}.
\]
Proposition \ref{FormulaTheorem} suggests a Markov Chain with state space $\GT_{N,a}$ with transition probabilities given by $T_{N,a}^{\varphi}$.

\begin{proposition}\label{RowsSumto1}
For any $\varphi\in C^1[-1,1]$, the rows of $T_{N,a}^{\varphi}$ sum to $\varphi(1)^N$. In particular, if $\varphi(1)=1$, then the rows of $T_{N,a}^{\varphi}$ sum to $1$.
\end{proposition}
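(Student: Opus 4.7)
The plan is to reduce the row-sum identity to a character identity that holds at the level of symmetric functions, and then specialize at the identity of the group. Unfolding the definition of $T_{N,a}^\varphi$, what we need is
\[
\sum_{\lambda\in\GT_N}\det\bigl[I_a^\varphi(\mu_i-i+N,\lambda_j-j+N)\bigr]_{1\le i,j\le N}\,\dim_{SO(2N+1/2+a)}\lambda=\varphi(1)^N\,\dim_{SO(2N+1/2+a)}\mu,
\]
and I will obtain this by evaluating at $z_1=\cdots=z_N=1$ the finer identity (with $\zeta_k=(z_k+z_k^{-1})/2$)
\[
\sum_{\lambda\in\GT_N}\det[I_a^\varphi(\mu_i-i+N,\lambda_j-j+N)]\cdot\widetilde{\chi}^\lambda(z_1,\dots,z_N)=\prod_{i=1}^N\varphi(\zeta_i)\cdot\widetilde{\chi}^\mu(z_1,\dots,z_N),
\]
where $\widetilde{\chi}^\lambda=\chi^\lambda_{SO(2N+1)}$ when $a=1/2$ and $\widetilde{\chi}^\lambda=\chi^\lambda_{SO(2N)}+\chi^{\lambda^\ast}_{SO(2N)}$ when $a=-1/2$. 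Specialising to $z_i=1$ gives $\widetilde{\chi}^\lambda(1,\dots,1)$ equal to $\dim_{SO(2N+1)}\lambda$ or $2\dim_{SO(2N)}\lambda$ respectively, and the factor $2$ cancels between the two sides of the first display.

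To prove this character identity I substitute the Weyl-type formulas \eqref{chiformula}--\eqref{chiformula2}, cancel the common denominator $\det[\zeta_j^{N-i}]$, and reindex by $k_i=\lambda_i-i+N\in\Z_{\ge 0}$ with $k_1>\cdots>k_N$. After transposing one of the two determinants so that both carry the $k_i$'s in their row index, Lemma \ref{Hua} (Cauchy--Binet) collapses the sum to the single determinant
\[
\det\!\left[\sum_{k\ge 0} I_a^\varphi(\mu_j-j+N,k)\,\mathsf{J}^{(a,-1/2)}_k(\zeta_i)\right]_{i,j=1}^N.
\]

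The key intermediate step is the identity
\[
\sum_{k\ge 0} I_a^\varphi(l,k)\,\mathsf{J}^{(a,-1/2)}_k(\zeta)=\mathsf{J}^{(a,-1/2)}_l(\zeta)\,\varphi(\zeta),
\]
which I would prove by pulling the $k$-sum inside the integral defining $I_a^\varphi$, recognising the reproducing kernel $\sum_k J^{(a,-1/2)}_k(x)J^{(a,-1/2)}_k(\zeta)/h^{(a,-1/2)}_k$, and applying Lemma \ref{DeltaDistribution} to the test function $T(x)=\mathsf{J}^{(a,-1/2)}_l(x)\varphi(x)\in C^1[-1,1]$. Pulling $\varphi(\zeta_i)$ out of the $i$-th row then turns the determinant into $\prod_i\varphi(\zeta_i)\det[\mathsf{J}^{(a,-1/2)}_{\mu_i-i+N}(\zeta_j)]$, which after reinstating the denominator $\det[\zeta_j^{N-i}]$ is exactly $\prod_i\varphi(\zeta_i)\cdot\widetilde{\chi}^\mu(z)$.

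The main subtlety I anticipate is the specialization $z_1=\cdots=z_N=1$, which makes the Weyl denominator $\det[\zeta_j^{N-i}]$ vanish. I would sidestep this by treating the character identity as an equality of continuous (in fact analytic) functions of $(z_1,\dots,z_N)$, deriving it for generic $z$ where the Weyl formula is unambiguous and then passing to the limit. The other technical checks---absolute convergence required for Lemma \ref{Hua} and for interchanging sum with integral---are of the same nature as in the proofs of Lemmas \ref{DeltaDistribution} and \ref{BorodinFerrari} and should pose no essential difficulty.
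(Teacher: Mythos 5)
Your proof is correct and takes essentially the same route as the paper's: both reduce the row-sum claim to the character identity $\sum_{\lambda}\det[I_a^{\varphi}(\mu_i-i+N,\lambda_j-j+N)]\,\widetilde{\chi}^{\lambda}(z)=\prod_i\varphi(\zeta_i)\cdot\widetilde{\chi}^{\mu}(z)$ via a Cauchy--Binet collapse plus the reproducing-kernel identity, then specialize at $z_1=\cdots=z_N=1$. The only cosmetic differences are that the paper invokes Lemma~\ref{BorodinFerrari} (which is exactly your Lemma~\ref{Hua}$+$Lemma~\ref{DeltaDistribution} combination, pre-packaged), and your worry about the Weyl denominator vanishing at $z=1$ is unnecessary once one observes that $\widetilde{\chi}^{\mu}$ is an honest finite-dimensional character (a polynomial in the matrix entries) whose value at the identity is $\dim_{SO(2N+1/2+a)}\mu$ up to a factor of $2$ that cancels on both sides.
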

\begin{proof}
Let $z_1,\ldots,z_N$ be complex numbers on the unit circle and set $\zeta_i=(z_i+z_i^{-1})/2$. Using the notation and statement of Lemma \ref{BorodinFerrari},
\begin{multline*}
\displaystyle\sum_{\lambda\in\GT_{N,a}}
\det\left[e^{(a,-1/2)}_{\lambda_j-j+N}(\zeta_i)\right]_{1\leq i,j\leq N}
\det[g(\mu_i-i+N,\lambda_j-j+N)]_{1\leq
i,j\leq N}\\
=\det\left[e_{\mu_j-j+N}^{(a,-1/2)}(\zeta_i)\right]_{1\leq i,j\leq
N}\varphi(\zeta_1)\ldots \varphi(\zeta_N).
\end{multline*}
By \eqref{hk}, for any $k,l\geq 0$,
\[
e_k^{(a,-1/2)}=\mathsf{J}_k^{(a,-1/2)}\sqrt{\frac{W(k)}{\pi}},\ \
g(k,l)=\sqrt{\frac{W(k)}{W(l)}}I_a^{\varphi}(k,l).
\]
Therefore
\begin{multline*}
\displaystyle\sum_{\lambda\in\GT_{N,a}}
\det\left[\mathsf{J}^{(a,-1/2)}_{\lambda_j-j+N}(\zeta_i)\right]_{1\leq
i,j\leq N}
\det\left[I_a^{\varphi}(\mu_i-i+N,\lambda_j-j+N)\right]_{1\leq
i,j\leq N}\\
=\det\left[\mathsf{J}^{(a,-1/2)}_{\mu_j-j+N}(\zeta_i)\right]_{1\leq
i,j\leq N}\varphi(\zeta_1)\ldots \varphi(\zeta_N),
\end{multline*}
or equivalently, by \eqref{chiformula} and \eqref{chiformula2},
\begin{multline*}
\displaystyle\sum_{\lambda\in\GT_{N,a}}
\chi^{\lambda}_{SO(2N+1/2+a)}(z_1,\ldots,z_N)
\det[I_a^{\varphi}(\mu_i-i+N,\lambda_j-j+N)]_{1\leq
i,j\leq N}\\
=\chi^{\mu}_{SO(2N+1/2+a)}(z_1,\ldots,z_N)\varphi(\zeta_1)\ldots
\varphi(\zeta_N).
\end{multline*}
Taking $z_1,\ldots,z_N=1$ shows that the rows of $T_{N,a}^{\varphi}$ sum to $\varphi(1)^N$.
\end{proof}

\begin{proposition}\label{NonnegativeEntries} If $\varphi(x)=p_0+p_1x$ with $p_0> p_1 \geq 0$, then each entry of $T_{N,a}^{\varphi}$ is nonnegative. The same holds if $\varphi(x)=e^{t(x-1)}$, where $t\geq 0$. Additionally, the diagonal entries of $T_{r,\pm 1/2}^{p_0+p_1x}$ are bounded below by
\begin{equation}\label{SmallestDet}
\left(\frac{R_+^r(R_+-p_1/2)-R_-^r(R_--p_1/2)}{\sqrt{p_0^2-p_1^2}}\right),
\end{equation}
where
\[
R_{\pm}=\frac{p_0\pm\sqrt{p_0^2-p_1^2}}{2}.
\]
\end{proposition}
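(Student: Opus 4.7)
The plan is to compute $I_a^\varphi$ explicitly for $\varphi(x) = p_0 + p_1 x$, observe that the matrix is tridiagonal with nonnegative entries under the hypothesis $p_0 > p_1 \ge 0$, invoke a Lindström--Gessel--Viennot / Karlin--McGregor identification of the determinant appearing in $T_{N,a}^\varphi$ as a sum over one-step non-intersecting paths, pass to the exponential case by a semigroup limit, and finally analyze the diagonal entries block-by-block.

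First I would insert $\varphi(x) = p_0 + p_1 x$ into the defining integral for $I_a^\varphi(l,i)$ and apply the three-term recursions \eqref{JacobiIdentities}--\eqref{JacobiIdentities2} together with orthogonality of the $\mathsf{J}_k^{(a,-1/2)}$. In the bulk the integral collapses to $I_a^\varphi(l,i) = p_0\,\delta_{l,i} + (p_1/2)(\delta_{l,i+1}+\delta_{l,i-1})$, with boundary corrections $I_{1/2}^\varphi(0,0) = p_0 - p_1/2$ (from $x\mathsf{J}_0^{(1/2,-1/2)} = -\tfrac12\mathsf{J}_0^{(1/2,-1/2)} + \tfrac12\mathsf{J}_1^{(1/2,-1/2)}$) and $I_{-1/2}^\varphi(0,1)=p_1$, $I_{-1/2}^\varphi(1,0)=p_1/2$ (from $x\mathsf{J}_0^{(-1/2,-1/2)} = \mathsf{J}_1^{(-1/2,-1/2)}$ together with the factor $W^{(-1/2,-1/2)}(1)/W^{(-1/2,-1/2)}(0) = 2$). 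Under $p_0 > p_1 \ge 0$ every entry is nonnegative. Because $T_{N,a}^\varphi(\mu,\lambda) = \det[I_a^\varphi(m_i,n_j)]_{i,j=1}^N$ times a positive dimension ratio, with $m_i := \mu_i-i+N$ and $n_j := \lambda_j-j+N$ strictly decreasing in $\Z_{\ge 0}$, non-negativity follows from the LGV / Karlin--McGregor formula for the nearest-neighbor (possibly substochastic at $0$) walk with one-step weights $I_a^\varphi$. For $\varphi(x)=e^{t(x-1)}$, I would first derive the semigroup identity $I^{\varphi_1\varphi_2} = I^{\varphi_1}I^{\varphi_2}$ from Lemma \ref{DeltaDistribution} (via the reproducing-kernel computation $\sum_k I^{\varphi_1}(l,k)\mathsf{J}_k^{(a,-1/2)}(\zeta) = \mathsf{J}_l^{(a,-1/2)}(\zeta)\varphi_1(\zeta)$); by Cauchy--Binet this lifts to $T^{\varphi_1\varphi_2} = T^{\varphi_1}T^{\varphi_2}$. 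Writing $e^{t(x-1)} = \lim_{k\to\infty}((1-t/k)+(t/k)x)^k$ then gives $T^{e^{t(x-1)}} = \lim_k (T^{(1-t/k)+(t/k)x})^k$, and for $k > 2t$ the linear factor obeys the polynomial hypothesis, so each factor and the limit are nonnegative.

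For the diagonal lower bound, fix $r$ and $\mu$ and set $m_i = \mu_i - i + r$. The tridiagonal support of $I_a^\varphi$ makes $[I_a^\varphi(m_i,m_j)]$ block-diagonal according to the maximal gapless runs in $\{m_i\}$: each bulk run of length $k$ contributes $D_k' := (R_+^{k+1}-R_-^{k+1})/(R_+-R_-)$, while a run of length $k$ touching the wall contributes $D_k := (R_+^k(R_+-p_1/2)-R_-^k(R_--p_1/2))/(R_+-R_-)$ for $a=1/2$ and $\widetilde D_k := D_k + (p_1/2)D_{k-1}$ for $a=-1/2$. All three satisfy the recursion $X_n = p_0 X_{n-1} - (p_1^2/4)X_{n-2}$, and a direct computation from their closed forms yields
\begin{align*}
D_a' D_b' - D_{a+b}' &= (p_1^2/4)\,D_{a-1}' D_{b-1}',\\
D_k D_s' - D_{k+s} &= (p_1^2/4)\,D_{k-1} D_{s-1}'.
\end{align*}
Because all quantities are nonnegative, these identities give $D_a'D_b' \ge D_{a+b}'$ and $D_k D_s' \ge D_{k+s}$, while $\widetilde D_k \ge D_k$ is immediate. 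Iterating these bounds through the block product yields $\det[I_a^\varphi(m_i,m_j)] \ge D_r$, which matches the claimed lower bound since $R_+ - R_- = \sqrt{p_0^2-p_1^2}$.

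The main obstacle will be the algebraic verification of the two $(p_1^2/4)$-identities displayed above; they reduce to plain calculations with $R_\pm$ but require careful bookkeeping. A secondary technical point is justifying the LGV argument at the wall, since for $a = 1/2$ the chain is substochastic (mass $p_1/\varphi(1)$ is killed at $0$); if a fully elementary argument is desired, one can write $I_{1/2}^\varphi(l,i) = \bar I(l,i) - \bar I(-l-1,i)$ with $\bar I$ the doubly-infinite bulk tridiagonal on $\Z$ and reduce the wall determinant to a signed sum of $2^N$ classical $\Z$-determinants, each nonnegative by the standard LGV on $\Z$.
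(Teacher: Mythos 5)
Your computation of $I_a^\varphi$ as a tridiagonal kernel and your semigroup reduction of $\varphi(x)=e^{t(x-1)}$ to iterated linear factors both match the paper. The place where your argument has a real gap is the nonnegativity of $\det[I_a^\varphi(m_i,n_j)]$. You cite LGV/Karlin--McGregor for the one-step nearest-neighbor walk, but that machinery does not apply to a \emph{one-step lazy} kernel: the crossing-compatibility hypothesis of Lindstr\"om's lemma fails, since the one-edge system $\{2\to 1',\ 1\to 2'\}$ is vertex-disjoint yet realizes a transposition. Concretely, the $2\times 2$ contiguous minor of the bulk kernel is $p_0^2-p_1^2/4$, which is negative when $p_0<p_1/2$; so the tridiagonal is not totally nonnegative on the strength of nonnegative entries alone, and neither LGV nor any discrete-time Karlin--McGregor identity can be invoked blindly. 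Your proposed reflection alternative has the same defect, since the unrestricted $\Z$-determinants of a one-step lazy kernel need not be nonnegative either. The hypothesis $p_0>p_1\ge0$ must enter the proof explicitly; the paper uses it by factoring each block into two triangular pieces and one tridiagonal piece, solving the recursion $D_r=p_0D_{r-1}-(p_1^2/4)D_{r-2}$ in closed form with $R_\pm=(p_0\pm\sqrt{p_0^2-p_1^2})/2$ real and positive precisely because $p_0>p_1$, and then checking the two boundary-modified determinants $D_r-(p_1/2)D_{r-1}$ and $D_r-(p_1^2/4)D_{r-2}$ directly.

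By contrast, your treatment of the diagonal lower bound is more careful than the paper's. The paper only observes that each of the three possible single-block-of-size-$r$ determinants dominates the claimed bound; it does not address a diagonal entry that factors over several blocks, where one must compare a product of smaller determinants against the size-$r$ quantity. Your identities $D'_aD'_b-D'_{a+b}=(p_1^2/4)D'_{a-1}D'_{b-1}$ and $D_kD'_s-D_{k+s}=(p_1^2/4)D_{k-1}D'_{s-1}$ (quick consequences of the closed forms and $R_+R_-=p_1^2/4$), together with $\widetilde{D}_k\ge D_k$ and $D'_k\ge D_k$, genuinely close that step: iterating them through an arbitrary block decomposition yields a bound by $D_r$ in your notation, which equals the displayed quantity since $R_+-R_-=\sqrt{p_0^2-p_1^2}$. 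This is the argument the paper leaves implicit.
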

\begin{proof}
First consider the situation when $\varphi(x)=p_0+p_1x$. Note that by \eqref{JacobiIdentities}--\eqref{JacobiIdentities2},  $I_{a}^{\varphi}(k,l)=0$ if $\vert k-l\vert>1$.

If $\mu_i<\lambda_i-1$ for some $i$ then $\mu_k<\lambda_l-1$ for $k\geq i$ and $l\leq i$, which implies that $I_a^{\varphi}(\mu_k,\lambda_l)=0$ for such $k,l$, and thus the determinant in question is $0$. If $\mu_i>\lambda_i+1$ then $\mu_k>\lambda_l+1$ for $k\leq i$ and $l\geq i$, which means $I_a(\mu_k,\lambda_l)=0$, and the determinant is $0$ again. So $\det[I_a^{\varphi}]$ is zero if $\vert\lambda_i-\mu_i\vert>1$ for some $i$. Hence, it remains to consider the case when $\vert\lambda_i-\mu_i\vert\leq 1$ for all $1\leq i\leq N$.

Split $\{\mu_i-i+N\}_{i=1}^N$ into blocks of neighbouring integers wth distance between blocks being at least $2$. Then it is easy to see that $\det[I_a(\mu_i-i+N,\lambda_j-j+N)]$ splits into the product of determinants corresponding to blocks. It suffices to show that the determinant corresponding to each block is nonnegative, so assume without loss of generality that $\{\mu_i-i+N\}_{i=1}^N$ is one such block. In other words, assume that all $\mu_i$ are equal. Then there exist $m$ and $n$, $1\leq m\leq n\leq N$ such that $\mu_i=\lambda_i-1$ for $1\leq i<m$, and $\mu_i=\lambda_i$ for $m\leq i<n$, and $\mu_i=\lambda_i+1$ for $n\leq i\leq N$. The determinant is the product of determinants of three matrices. We shall examine each of these matrices.

Because $I_a^{\varphi}(k,k\pm 1)\geq 0$, the matrix parametrized by $1\leq i,j<m$ is triangular with nonnegative diagonal entries, so has nonnegative determinant. Similarly, the matrix parametrized by $n\leq i,j\leq N$ also is triangular with nonnegative diagonal entries. It remains to consider the matrix parametrized by $m\leq i,j<n$.

For now, assume $\lambda_{n-1}\neq 0.$ Then this matrix is tridiagonal, with $p_0$ in the diagonal entries and $p_1/2$ in the subdiagonal and superdiagonal entries. If this matrix has size $r\times r$, let $D_r$ denote its determinant. Then $D_r$ satisfies the recurrence relation
\[
D_r = p_0D_{r-1} - \frac{p_1^2}{4}D_{r-2}, \ \ D_1 = p_0, \ \ D_2 = p_0^2-\frac{p_1^2}{4}.
\]
Solving this explicitly yields
\[
D_r = \frac{1}{\sqrt{p_0^2-p_1^2}}\left(\left(\frac{p_0+\sqrt{p_0^2-p_1^2}}{2}\right)^{r+1} - \left(\frac{p_0-\sqrt{p_0^2-p_1^2}}{2}\right)^{r+1}\right),
\]
which shows that $D_r$ is nonnegative.

If $\lambda_{n-1}=0$ and $a=1/2$, then the entry in the $r$th row and $r$th column is $p_0-p_1/2$ instead of $p_0$. The other entries are $p_0,p_1/2$, and $0$, as before. In this case, the determinant is
\begin{eqnarray*}
\left(p_0-\frac{p_1}{2}\right)D_{r-1} - \frac{p_1^2}{4} D_{r-2} &=& D_r-\frac{p_1}{2}D_{r-1}\\
&=& A_r\left(\frac{p_0+\sqrt{p_0^2-p_1^2}}{2}\right)-A_r\left(\frac{p_0-\sqrt{p_0^2-p_1^2}}{2}\right),\\
&& \text{where}\ A_r(x) = \frac{1}{\sqrt{p_0^2-p_1^2}}\left(x^{r+1}-\frac{p_1}{2}x^r\right).
\end{eqnarray*}
Note that $A_r$ is positive on the interval $(p_1/2,\infty)$, which contains $(p_0+\sqrt{p_0^2-p_1^2})/2$, and is negative on the interval $(0,p_1/2)$, which contains $(p_0-\sqrt{p_0^2-p_1^2})/2$. Therefore the above expression is nonnegative.

If $\lambda_{n-1}=0$ and $a=-1/2$, then the only modified entry is in the $r$th row and $(r-1)$st column. It equals $p_1$ instead of $p_1/2$. In this case, the determinant is
\[
p_0D_{r-1}-\frac{p_1^2}{2}D_{r-2} = D_r - \frac{p_1^2}{4}D_{r-2}.
\]
We have already shown that $D_r\geq (p_1/2)D_{r-1}$, so therefore $D_r\geq(p_1^2/4)D_{r-2}$. So the above expression is also nonnegative.

For the last claim in the proposition, note that $D_r\geq D_r-(p_1/2)^2D_{r-2} \geq D_r-(p_1/2)D_{r-1}=\eqref{SmallestDet}$.

Now let $\varphi(x)=e^{t(x-1)}$. For $n>2t$, let $\varphi_n(x)=\left(1+\frac{t(x-1)}{n}\right)^n$. By Lemma \ref{GroupHomomorphism} below, $T_{N,a}^{\varphi_n}=T_{N,a}^{1+t(x-1)/n}\ldots T_{N,a}^{1+t(x-1)/n}$. We just showed that all entries of each $T_{N,a}^{1+t(x-1)/n}$ are nonnegative. Therefore all entries of $T_{N,a}^{\varphi_n}$ are nonnegative.
By Lemma \ref{UniformConvergence}, $T_{N,a}^{\varphi}(\mu,\lambda)$ equals $\lim_{n\rightarrow\infty} T_{N,a}^{\varphi_n}(\mu,\lambda)$, so all entries of $T_{N,a}^{\varphi}$ are nonnegative.
\end{proof}

\begin{lemma}\label{GroupHomomorphism}
If $\varphi_1,\varphi_2\in C^1[-1,1]$, then $T_{N,a}^{\varphi_1\varphi_2}=T_{N,a}^{\varphi_1}T_{N,a}^{\varphi_2}$.
\end{lemma}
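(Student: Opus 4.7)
The plan is to reduce the claim to a Cauchy--Binet style identity by first establishing that $\varphi\mapsto I_a^\varphi$ is multiplicative at the level of infinite matrices, and then transferring this identity to $N\times N$ determinants via Lemma~\ref{Hua}.

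\textbf{Step 1 (multiplicativity of $I_a^\varphi$).} I first want to check that
\[
I_a^{\varphi_1\varphi_2}(l,i)=\sum_{j=0}^{\infty}I_a^{\varphi_1}(l,j)\,I_a^{\varphi_2}(j,i).
\]
Substituting the integral definition of each factor and interchanging summation with integration gives
\[
\sum_{j=0}^\infty I_a^{\varphi_1}(l,j)\,I_a^{\varphi_2}(j,i)=\frac{W^{(a,-1/2)}(i)}{\pi}\int_{-1}^1\mathsf{J}_i^{(a,-1/2)}(y)\,\varphi_2(y)\,S(y)\,(1-y)^a(1+y)^{-1/2}\,dy,
\]
where $S(y)$ is the inner series
\[
S(y)=\sum_{j=0}^\infty \frac{W^{(a,-1/2)}(j)}{\pi}\int_{-1}^1 \mathsf{J}_j^{(a,-1/2)}(x)\mathsf{J}_j^{(a,-1/2)}(y)\bigl(\mathsf{J}_l^{(a,-1/2)}(x)\varphi_1(x)\bigr)(1-x)^a(1+x)^{-1/2}\,dx.
\]
Since $\mathsf{J}_l^{(a,-1/2)}\varphi_1\in C^1[-1,1]$, Lemma~\ref{DeltaDistribution} collapses $S(y)$ to $\mathsf{J}_l^{(a,-1/2)}(y)\varphi_1(y)$, and the remaining integral is exactly $I_a^{\varphi_1\varphi_2}(l,i)$.

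\textbf{Step 2 (Cauchy--Binet).} Apply Lemma~\ref{Hua} with $e_k(\zeta_j):=I_a^{\varphi_2}(k,\lambda_j-j+N)$ and $g_k(x_i):=I_a^{\varphi_1}(\mu_i-i+N,k)$. The right-hand side of the Hua identity becomes $\det\bigl[\sum_k I_a^{\varphi_2}(k,\lambda_j-j+N)I_a^{\varphi_1}(\mu_i-i+N,k)\bigr]_{i,j=1}^N$, which by Step~1 equals $\det[I_a^{\varphi_1\varphi_2}(\mu_i-i+N,\lambda_j-j+N)]_{i,j=1}^N$. On the left-hand side, reindex the summation indices as $k_l=\nu_l-l+N$ with $\nu\in\GT_N$, and take the transpose of the determinant involving $g_{k_i}$, so that the left-hand side reads
\[
\sum_{\nu\in\GT_N}\det[I_a^{\varphi_1}(\mu_i-i+N,\nu_j-j+N)]_{i,j=1}^N\cdot\det[I_a^{\varphi_2}(\nu_i-i+N,\lambda_j-j+N)]_{i,j=1}^N.
\]
Multiplying both sides of the resulting equality by $\dim_{SO(2N+1/2+a)}\lambda/\dim_{SO(2N+1/2+a)}\mu$ and inserting the ratio $\dim_{SO(2N+1/2+a)}\nu/\dim_{SO(2N+1/2+a)}\nu$ into each summand on the left recognizes both sides as the matrix entries of $T_{N,a}^{\varphi_1}T_{N,a}^{\varphi_2}$ and $T_{N,a}^{\varphi_1\varphi_2}$ respectively, by the definition of $T_{N,a}^{\varphi}$.

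\textbf{Main obstacle.} The one delicate point is convergence: Lemma~\ref{Hua} requires absolute convergence of $\sum_k I_a^{\varphi_1}(\mu_i-i+N,k)\,I_a^{\varphi_2}(k,\lambda_j-j+N)$, and the Fubini interchange in Step~1 requires an analogous justification. Both should follow from Cauchy--Schwarz together with the fact that the $I_a^\varphi(l,\cdot)$ are (up to the $W^{(a,-1/2)}$ weight) the Fourier--Jacobi coefficients of $\mathsf{J}_l^{(a,-1/2)}\varphi\in C^1$, hence square-summable. I expect this to be the only part of the argument requiring an explicit estimate; everything else is formal manipulation of the Cauchy--Binet identity.
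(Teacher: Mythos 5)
Your proof is correct and is precisely the "straightforward computation using Lemmas~\ref{Hua} and \ref{DeltaDistribution}" that the paper invokes without elaboration: Step~1 is the $\ell^2$-multiplicativity of $I_a^{\varphi}$ via Lemma~\ref{DeltaDistribution}, Step~2 is Cauchy--Binet via Lemma~\ref{Hua}, and your convergence remark (square-summability of Jacobi coefficients of $C^1$ functions plus Cauchy--Schwarz, together with uniform convergence of the Fourier expansion to justify the Fubini interchange) adequately handles the only technical point.
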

\begin{proof}
This is a straightforward computation using the Lemmas \ref{Hua} and \ref{DeltaDistribution}.
\end{proof}

\begin{lemma}\label{UniformConvergence}
Suppose $(\varphi_n)_{n=1}^{\infty},\varphi\in C^1[-1,1]$ and $\varphi_n$ converges to $\varphi$ uniformly on $[-1,1]$. Then for any $\lambda,\mu\in\GT_N$, $T_{N,a}^{\varphi_n}(\mu,\lambda)$ converges to $T_{N,a}^{\varphi}(\mu,\lambda)$.
\end{lemma}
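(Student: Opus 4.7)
The plan is to reduce convergence of $T_{N,a}^{\varphi_n}(\mu,\lambda)$ to entrywise convergence of the $N\times N$ matrix whose determinant enters its definition, and then invoke continuity of the determinant. The dimension ratio $\dim_{SO(2N+1/2+a)}\lambda/\dim_{SO(2N+1/2+a)}\mu$ does not depend on $\varphi$, so it suffices to show that for each fixed pair of nonnegative integers $k,l$,
\[
I_a^{\varphi_n}(k,l) \longrightarrow I_a^{\varphi}(k,l) \qquad \text{as } n\to\infty,
\]
and then use that $\det$ is a polynomial (hence continuous) function of the $N^2$ entries.

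For the entrywise step, observe that since $\mu,\lambda\in\GT_N$ are fixed, the indices $k=\mu_i-i+N$ and $l=\lambda_j-j+N$ with $i,j\in\{1,\ldots,N\}$ range over a finite set of nonnegative integers, so the Jacobi polynomials $\mathsf{J}_k^{(a,-1/2)}$ and $\mathsf{J}_l^{(a,-1/2)}$ are fixed continuous functions on $[-1,1]$, bounded by some constant $M_{k,l}$. Using the integral representation
\[
I_a^{\varphi}(l,i) = \frac{W^{(a,-1/2)}(i)}{\pi}\int_{-1}^1\mathsf{J}_i^{(a,-1/2)}(x)\mathsf{J}_l^{(a,-1/2)}(x)\varphi(x)(1-x)^a(1+x)^{-1/2}\,dx
\]
and writing $\|\cdot\|_\infty$ for the sup-norm on $[-1,1]$, I would estimate
\[
|I_a^{\varphi_n}(l,i)-I_a^{\varphi}(l,i)| \leq \frac{W^{(a,-1/2)}(i)}{\pi}\, M_{i,l}\,\|\varphi_n-\varphi\|_\infty \int_{-1}^1 (1-x)^a(1+x)^{-1/2}\,dx.
\]
The weight $(1-x)^a(1+x)^{-1/2}$ is integrable on $[-1,1]$ for $a=\pm 1/2$ (both cases reduce to variants of the Chebyshev weight), so the integral is a finite constant. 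The right-hand side then tends to $0$ by the assumed uniform convergence $\varphi_n\to\varphi$.

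Combining the entrywise convergence with the continuity of the map sending an $N\times N$ matrix to its determinant finishes the argument: each entry of the matrix $[I_a^{\varphi_n}(\mu_i-i+N,\lambda_j-j+N)]_{i,j=1}^N$ converges to the corresponding entry of $[I_a^{\varphi}(\mu_i-i+N,\lambda_j-j+N)]_{i,j=1}^N$, hence the determinants converge, and multiplying by the $\varphi$-independent dimension ratio yields $T_{N,a}^{\varphi_n}(\mu,\lambda)\to T_{N,a}^{\varphi}(\mu,\lambda)$. There is no real obstacle; the only thing worth being careful about is to record that the Jacobi polynomial factors and the integral of the weight contribute only fixed finite constants, so that the uniform smallness of $\varphi_n-\varphi$ propagates cleanly through the integral.
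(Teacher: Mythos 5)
Your proof is correct and follows essentially the same structure as the paper's: establish that each matrix entry $I_a^{\varphi_n}(k,l)$ converges to $I_a^\varphi(k,l)$, then conclude via continuity of the determinant (and the fact that the dimension ratio is $\varphi$-independent). The only cosmetic difference is that you show entrywise convergence by an explicit sup-norm estimate against the (integrable, for $a=\pm\tfrac12$) weight, while the paper invokes the dominated convergence theorem; both are standard and equally valid.
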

\begin{proof}
Since $\varphi_n$ converges uniformly to $\varphi$ and $\varphi$ is bounded, the dominated convergence theorem implies that each $I_a^{\varphi_n}(i,j)$ converges to $I_a^{\varphi}(i,j)$. Since $T_{N,a}^{\varphi_n}(\mu,\lambda)$ is continuous in the variables $I_a^{\varphi_n}(\mu_i-i+N,\lambda_j-j+N)$, it must converge to $T_{N,a}^{\varphi}(\mu,\lambda)$.
\end{proof}

\subsection{Generalities on Multivariate Markov Chains}\label{GoMMC}
Recall that in section \ref{CM}, we explained that the measures $P^{\omega}_{N,a}$ generalize to $P^{\omega}$. We just constructed a Markov Chain that maps $P^{\omega}_{N,a}$ to $P^{\tilde{\omega}}_{N,a}$, so it is natural to expect a Markov Chain that maps $P^{\omega}$ to $P^{\tilde{\omega}}$. Our goal now is to extend $T_{N,\pm}^{\varphi}$ to stochastic matrices on finite paths that map $P^{\omega}$ to $P^{\tilde{\omega}}$ (or rather their projections on finite paths). This section describes a general construction from \cite{kn:BF2} which builds a Markov chain from smaller ones. In the next section, this construction will be applied to our case. The original idea for bivariate Markov chains goes back to \cite{kn:DF}.

Let $\mathcal{S}_1,\ldots,\mathcal{S}_n$ be discrete sets. For $1\leq k\leq n$, let $T_k$ be a stochastic matrix with rows and columns indexed by $\mathcal{S}_k$. For $2\leq k\leq n$, let $\Lambda^{k}_{k-1}$ be a stochastic matrix with rows indexed by $\mathcal{S}_{k}$ and columns indexed by $\mathcal{S}_{k-1}$. Assume these matrices commute:
\[
\Delta_{k-1}^{k}:=\Lambda^{k}_{k-1}T_{k-1}=T_k\Lambda^{k}_{k-1}.
\]
The state space for the multivariate Markov Chain is
\[
\mathcal{S}^{(n)}_{\Lambda}=\{(x_1,\ldots,x_n)\in\mathcal{S}_1\times\ldots\times\mathcal{S}_n:\displaystyle\prod_{k=2}^{\infty}\Lambda_{k-1}^k(x_k,x_{k-1})\neq 0\}.
\]
Write $X_n=(x_1,\ldots,x_n),Y_n=(y_1,\ldots,y_n)\in\mathcal{S}_{\Lambda}^{(n)}$. The probability of a transition from $X_n$ to $Y_n$ is
\[
\begin{cases}
T_1(x_1,y_1)\displaystyle\prod_{k=2}^{n}\frac{T_k(x_k,y_k)\Lambda_{k-1}^k(y_k,y_{k-1})}{\Delta_{k-1}^k(x_k,y_{k-1})},\ \ \text{if}\ \prod_{k=2}^{n}\Delta_{k-1}^k(x_k,y_{k-1})>0\\
0,\ \  \text{otherwise}.
\end{cases}
\]

Let $\mathbf{T}$ denote this matrix of transition probabilities. One could think of $\mathbf{T}$ as follows.

Starting from $X=(x_1,\ldots,x_n)$, first choose $y_1$ according to
the transition matrix $T_1(x_1,y_1)$, then choose $y_2$ using $\frac{T_2(x_2,y_2)\Lambda^2_1(y_2,y_1)}{\Delta^2_1(x_2,y_1)}$ , which is
the conditional distribution of the middle point in the successive application
of $T_2$ and $\Lambda^2_1$ provided that we start at $x_2$ and finish at $y_1$. Then choose $y_3$ using the conditional distribution of the middle point in the successive
application of $T_3$ and $\Lambda^3_2$ provided that we start at $x_3$ and finish at $y_2$, and
so on. Thus, one could say that $Y$ is obtained by the \textit{sequential update} \cite{kn:BF}.

The next proposition will be used later.
\begin{proposition}\label{BFProposition}
Let $m_n(x_n)$ be a probability measure on $\mathcal{S}_n$. Consider the evolution of the measure $m_n(x_n)\Lambda_{n-1}^n(x_n,x_{n-1})\ldots\Lambda^2_1(x_2,x_1)$ on $\mathcal{S}_{\Lambda}^{(n)}$ under the Markov chain $\mathbf{T}$, and denote by $(x_1(j),\ldots,x_n(j))$ the result after $j=0,1,2,\ldots$ steps. Then for any $k_1\geq k_2\geq\ldots\geq k_n\geq 0$, the joint distribution of
\begin{multline*}
(x_n(1),\ldots,x_n(k_n),x_{n-1}(k_n),x_{n-1}(k_n+1)\ldots,x_{n-1}(k_{n-1}),\\
x_{n-2}(k_{n-1}),\ldots,x_2(k_2),x_1(k_2),\ldots,x_1(k_1))
\end{multline*}
coincides with the stochastic evolution of $m_n$ under transition matrices
\[
(\underbrace{T_n,\ldots,T_n}_{k_n},\Lambda_{n-1}^n,\underbrace{T_{n-1},\ldots,T_{n-1}}_{k_{n-1}-k_n},\Lambda_{n-2}^{n-1},\ldots,\Lambda_1^2,\underbrace{T_1,\ldots,T_1}_{k_1-k_2})
\]
\end{proposition}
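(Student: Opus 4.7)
The plan is to first isolate the key structural property of the sequential update $\mathbf{T}$: at every time $j$, the joint law of $(x_1(j),\ldots,x_n(j))$ retains the Gibbs form
\[
m_n^{(j)}(x_n)\prod_{k=2}^{n}\Lambda_{k-1}^k(x_k,x_{k-1}),\qquad m_n^{(j)}:=T_n^{\,j}\,m_n.
\]
I would prove this by induction on $j$. The step $j\mapsto j+1$ is a direct calculation: write the transition $\mathbf{T}(X_n,Y_n)$ out, multiply by the inductive Gibbs expression for $X_n$, and sum over $X_n$ level by level from the bottom upward. The denominators $\Delta_{k-1}^k(x_k,y_{k-1})$ in the definition of $\mathbf{T}$ are exactly cancelled by the partial sums $\sum_{x_{k-1}}\Lambda_{k-1}^k(x_k,x_{k-1})T_{k-1}(x_{k-1},y_{k-1})=\Delta_{k-1}^k(x_k,y_{k-1})$, leaving $(T_n m_n^{(j)})(y_n)\prod_k \Lambda_{k-1}^k(y_k,y_{k-1})$ as required.

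From this Gibbs-form lemma I would extract two corollaries used below. First, by summing out the bottom $n-1$ coordinates, $(x_n(j))_{j\ge 0}$ is itself a Markov chain on $\mathcal{S}_n$ with one-step transition $T_n$. Second, in the sequential update the coordinates $y_1,\ldots,y_{n-1}$ are sampled using only $x_1,\ldots,x_{n-1}$ and the auxiliary randomness at those bottom levels, never $x_n$. Hence $\bigl((x_1(j),\ldots,x_{n-1}(j))\bigr)_{j\ge 0}$ is itself a multivariate Markov chain of the same type, built from $T_1,\ldots,T_{n-1}$ and $\Lambda_1^2,\ldots,\Lambda_{n-2}^{n-1}$, whose trajectory is conditionally independent of $(x_n(\cdot))$ given the common initial data.

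With these tools I induct on $n$. The base case $n=1$ is the tautology that $(x_1(1),\ldots,x_1(k_1))$ is the standard $T_1$-trajectory out of $m_1$. For the step, fix the trajectory $(x_n(0),\ldots,x_n(k_n))$: by the first corollary it is a $T_n$-Markov trajectory from $m_n$, contributing the first block of factors. Next I would compute the conditional law of $x_{n-1}(k_n)$ given the entire top-level history via the same telescoping sum that proved the Gibbs-form lemma; the past of $x_n$ marginalizes out, showing this conditional law is $\Lambda_{n-1}^n(x_n(k_n),\cdot)$ and depends only on $x_n(k_n)$. Finally, by the autonomy of the bottom $n-1$ levels, the subsequent variables $(x_{n-1}(k_n+1),\ldots,x_1(k_1))$ given $x_{n-1}(k_n)$ are the corresponding diagonal slice of the $(n-1)$-level chain restarted at time $k_n$ with initial condition supported on the fiber above $x_{n-1}(k_n)$. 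The induction hypothesis supplies the remaining factors $T_{n-1}^{k_{n-1}-k_n}\,\Lambda_{n-2}^{n-1}\cdots T_1^{k_1-k_2}$, and multiplying the three pieces together recovers the asserted product.

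The main obstacle I foresee is the middle step, namely showing that conditioning on the \emph{whole} past trajectory of $x_n$, and not merely on $x_n(k_n)$, still yields $\Lambda_{n-1}^n(x_n(k_n),\cdot)$ as the law of $x_{n-1}(k_n)$. This conditional independence is what permits ``detaching'' the top trajectory and restarting the sub-chain at time $k_n$ with only $x_{n-1}(k_n)$ as input, and it rests on exactly the same $\Delta_{k-1}^k$ cancellation as in the Gibbs-form lemma.
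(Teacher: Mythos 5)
The paper does not supply its own proof here; it simply cites Proposition~2.5 of \cite{kn:BF}, so there is no paper argument to compare against line by line. Your plan is a correct and standard route, and it is essentially the argument one finds in the cited reference: the preserved-Gibbs-form lemma via the $\Delta^k_{k-1}$ telescoping, the observation that the top marginal evolves by $T_n$ (which is really the same telescoping applied to the joint law of $(x_n(0),\dots,x_n(k_n),x_1(k_n),\dots,x_{n-1}(k_n))$), the Gibbs conditional giving $x_{n-1}(k_n)\sim\Lambda^n_{n-1}(x_n(k_n),\cdot)$, and finally the restart-and-induct step for the bottom $n-1$ levels. You also correctly flag the most delicate point (conditioning on the whole top history, not just $x_n(k_n)$) and the right mechanism to handle it.

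One phrase is imprecise and should be replaced. You write that the bottom trajectory ``is conditionally independent of $(x_n(\cdot))$ given the common initial data.'' That is false as stated: the top coordinate $y_n$ is sampled using $y_{n-1}$, so the top trajectory is a function of the bottom trajectory plus fresh top-level randomness, and hence is \emph{not} conditionally independent of the bottom trajectory given the initial state. What you actually use, and what is true, is the conjunction of three facts: (i) the bottom $(n-1)$ levels evolve \emph{autonomously} (their update never reads $x_n$); (ii) by the telescoping computation, the joint law of the top trajectory up to time $k_n$ together with $(x_1(k_n),\dots,x_{n-1}(k_n))$ equals $m_n(x_n(0))\prod_{j}T_n(x_n(j),x_n(j+1))\prod_{k}\Lambda^k_{k-1}(x_k(k_n),x_{k-1}(k_n))$, so that conditionally on the entire top history and on $x_{n-1}(k_n)=b$ the remaining bottom coordinates at time $k_n$ are Gibbs-distributed $\delta_b\prod_{k\le n-1}\Lambda^k_{k-1}$; and (iii) the Markov property of the full chain plus autonomy lets you restart the bottom $(n-1)$-level chain at time $k_n$ from that Gibbs fiber. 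If you swap the ``conditional independence'' phrasing for (i)--(iii), the inductive step is airtight and matches the cited source's argument.
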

\begin{proof}
See Proposition 2.5 of \cite{kn:BF}.
\end{proof}

Let $L$ be the linear subspace of $l^1(\mathcal{S}^{(n)}_{\Lambda})$
spanned by elements of the form
$m_n(x_n)\Lambda^n_{n-1}(x_n,x_{n-1})\ldots\Lambda^2_1(x_2,x_1)$,
where $m_n$ is a summable function on $\mathcal{S}_n$. Then
$\mathbf{T}$ can be thought of as a bounded linear operator of $L$.
Similarly, $T_n$ is a bounded linear operator on
$l^1(\mathcal{S}_n)$.

\begin{lemma}\label{NormsAreEqual}
With the notation from above,
\[
\Vert \mathbf{T} - Id \Vert_L = \Vert T_n - Id\Vert_{l^1(\mathcal{S}_n)}.
\]
\end{lemma}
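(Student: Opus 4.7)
The plan is to show that $\mathbf{T}$ restricted to $L$ is conjugate, via an isometry, to $T_n$ acting on $l^1(\mathcal{S}_n)$. The key claim is that for any summable $m_n$ on $\mathcal{S}_n$, writing $f(x_1,\dots,x_n)=m_n(x_n)\prod_{k=2}^{n}\Lambda_{k-1}^{k}(x_k,x_{k-1})$, one has
\[
(\mathbf{T}f)(y_1,\dots,y_n)=(m_nT_n)(y_n)\prod_{k=2}^{n}\Lambda_{k-1}^{k}(y_k,y_{k-1}),
\]
where $(m_nT_n)(y)=\sum_{x}m_n(x)T_n(x,y)$. In other words, $\mathbf{T}$ acts on $L$ only through the top marginal, by $T_n$.

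To establish this, I would substitute the explicit formulas for $f(X)$ and $\mathbf{T}(X,Y)$ from the definitions, pull the factor $\prod_{k=2}^{n}\Lambda_{k-1}^{k}(y_k,y_{k-1})$ out of the sum over $X$, and then sum out the variables $x_1,x_2,\dots,x_{n-1}$ one at a time. At the $x_1$-step we use $\sum_{x_1}\Lambda_1^{2}(x_2,x_1)T_1(x_1,y_1)=\Delta_{1}^{2}(x_2,y_1)$, which exactly cancels the $\Delta_1^{2}(x_2,y_1)$ in the denominator of the $k=2$ factor of $\mathbf{T}(X,Y)$ and leaves $T_2(x_2,y_2)$ exposed. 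Iterating, the $x_j$-sum produces $\Delta_{j}^{j+1}(x_{j+1},y_j)=(\Lambda_j^{j+1}T_j)(x_{j+1},y_j)$, which cancels the next denominator and exposes $T_{j+1}(x_{j+1},y_{j+1})$. After all such cancellations only the sum $\sum_{x_n}m_n(x_n)T_n(x_n,y_n)=(m_nT_n)(y_n)$ remains, giving the claim.

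With this in hand, the norm computation is routine. Because each $\Lambda_{k-1}^{k}$ is a nonnegative stochastic kernel,
\[
\|f\|_{l^{1}(\mathcal S^{(n)}_{\Lambda})}
=\sum_{x_n}|m_n(x_n)|\sum_{x_1,\dots,x_{n-1}}\prod_{k=2}^{n}\Lambda_{k-1}^{k}(x_k,x_{k-1})
=\|m_n\|_{l^1(\mathcal S_n)},
\]
after collapsing the inner sums $\sum_{x_1}\Lambda_1^{2}(x_2,x_1)=1$, then $\sum_{x_2}\Lambda_2^{3}(x_3,x_2)=1$, and so on. Consequently the map $m_n\mapsto m_n\Lambda_{n-1}^{n}\cdots\Lambda_1^{2}$ is an isometric isomorphism from $l^{1}(\mathcal S_n)$ onto $L$, and by the displayed formula this isomorphism intertwines $T_n$ with $\mathbf{T}$. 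Therefore
\[
\|(\mathbf{T}-\mathrm{Id})f\|_{L}=\|(T_n-\mathrm{Id})m_n\|_{l^1(\mathcal S_n)},
\]
and taking the supremum over $m_n\ne 0$ gives $\|\mathbf{T}-\mathrm{Id}\|_L=\|T_n-\mathrm{Id}\|_{l^1(\mathcal S_n)}$.

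The only real obstacle is the telescoping cancellation in the first step; everything else is bookkeeping. One small point worth noting in passing is that one must check the intermediate cancellations are legitimate for signed $m_n$, but since all the matrices $\Lambda_{k-1}^{k}$, $T_k$, $\Delta_{k-1}^{k}$ have nonnegative entries and the summations are over finite (or absolutely convergent) expressions by hypothesis, Fubini applies and the rearrangement of sums is justified.
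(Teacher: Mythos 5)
Your proof is correct and takes essentially the same route as the paper: establish that $m_n\mapsto m_n(x_n)\prod_{k}\Lambda^k_{k-1}(x_k,x_{k-1})$ is a linear isometry of $l^1(\mathcal S_n)$ onto $L$ (via stochasticity of the $\Lambda$'s), observe that it intertwines $T_n$ with $\mathbf T$, and take suprema. The only difference is that you re-derive the intertwining identity $\mathbf{T}f=(m_nT_n)\prod_k\Lambda^k_{k-1}$ by the telescoping sum, whereas the paper simply cites Proposition~\ref{BFProposition} (Proposition 2.5 of \cite{kn:BF}) for it; both are valid, and your self-contained telescoping argument is exactly what lies behind that proposition, provided one observes (as the definition of $\mathbf{T}$ already guarantees) that terms with vanishing $\Delta^k_{k-1}(x_k,y_{k-1})$ contribute zero on both sides so the cancellation is legitimate.
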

\begin{proof}
By definition,
\begin{eqnarray*}
\Vert \mathbf{T} - Id \Vert_L &=& \displaystyle\sup_{f\in L} \frac{\Vert \mathbf{T}f - f\Vert_L}{\Vert f\Vert_L},\\
\Vert T_n - Id \Vert_{l^1(\mathcal{S}_n)} &=& \displaystyle\sup_{m_n\in l^1(\mathcal{S}_n)} \frac{\Vert T_n m_n - m_n\Vert_{l^1(\mathcal{S}_n)}}{\Vert m_n\Vert_{l^1(\mathcal{S}_n)}}.
\end{eqnarray*}

If $f\in L$, then $f$ must have the form
$m_n(x_n)\Lambda^n_{n-1}(x_n,x_{n-1})\ldots\Lambda^2_1(x_2,x_1)$ for
some $m_n\in l^1(\mathcal{S}_n)$. Since the matrices
$\Lambda_k^{k+1}$ are all stochastic, $\Vert f\Vert_L = \Vert
m_n\Vert_{l^1(\mathcal{S}_n)}$. By Proposition \ref{BFProposition},
$\mathbf{T}f=(T_nm_n)(x_n)\Lambda^n_{n-1}(x_n,x_{n-1})\ldots\Lambda^2_1(x_2,x_1)$,
which implies $\Vert \mathbf{T}f - f\Vert_L = \Vert T_n m_n -
m_n\Vert_{l^1(\mathcal{S}_n)}$. Thus, the lemma holds.
\end{proof}

\subsection{Markov Chain on Multiple Levels}\label{MCoML}
We need an implementation of the stochastic matrices $\Lambda_{k-1}^k$. Define
the matrix $T^{N,+}_{N,-}$ on $\GT_{N,+}\times\GT_{N,-}$ by
\[
T^{N,+}_{N,-}(\mu,\lambda)=\frac{\dim_{SO(2N)}\lambda}{\dim_{SO(2N+1)}\mu}
\det[\T(\mu_i-i+N,\lambda_j-j+N)]_{i,j=1}^N
\]
where
\[
\T(x,y)=
\begin{cases}
1,\ \ x\geq y=0,\\
2,\ \ x\geq y>0,\\
0,\ \ x<y.
\end{cases}.
\]
Also define the matrix $T^{N,-}_{N-1,+}$ on $\GT_{N,-}\times\GT_{N-1,+}$ by
\[
T^{N,-}_{N-1,+}(\mu,\lambda)=\frac{\dim_{SO(2N-1)}\lambda}
{\dim_{SO(2N)}\mu}\det[\phi(\mu_i-i+N,\lambda_j-j+N-1)]_{i,j=1}^N
\]
where
\[
\phi(x,y)=
\begin{cases}
1,\ \ x>y,\\
0,\ \ x\leq y,
\end{cases}
\]

Recall the definition of $\varkappa$ in Section \ref{CM}.

\begin{lemma}\label{Interlacing} For any $(\lambda,\mu)\in \GT_{N-1,+}\times\GT_{N,-}$,
\[
\det[\phi(\mu_i-i+N,\lambda_j-j+N-1)]_{i,j=1}^N=\varkappa(\lambda,\mu).
\]
For any $(\lambda,\mu)\in \GT_{N,-}\times\GT_{N,+}$,
\[
\det[\T(\mu_i-i+N,\lambda_j-j+N)]_{i,j=1}^N=\varkappa(\lambda,\mu).
\]
\end{lemma}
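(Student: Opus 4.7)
The plan is to compute each determinant directly by treating the matrix entries as threshold-indicator functions on strictly decreasing sequences.

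First I would pass to the shifted coordinates $a_i = \mu_i - i + N$ and $b_j = \lambda_j - j + N - 1$ (Part 1, with the convention $\lambda_N = 0$) or $b_j = \lambda_j - j + N$ (Part 2). In both parts the tuples $(a_i)_{i=1}^N$ and $(b_j)_{j=1}^N$ are strictly decreasing, and the interlacing relation $\lambda \prec \mu$ translates into the componentwise inequalities $\mu_j \geq \lambda_j$ and $\mu_{j+1} \leq \lambda_j$, which in the shifted variables become $a_j > b_j$ together with $a_{j+1} \leq b_j$ (Part 1) or $a_j \geq b_j$ together with $a_{j+1} < b_j$ (Part 2). Note also that $b_N = 0$ in Part 2 precisely when $\lambda_N = 0$, which is exactly the distinction governing $\varkappa \in \{1,2\}$.

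In the interlacing case, the strict monotonicity of both sequences propagates these pointwise inequalities throughout the matrix: for $i \leq j$ one has $a_i \geq a_j$, so the $(i,j)$-entry attains the ``large'' nonzero value ($1$ for $\phi$, or $2$ or $1$ for $\T$ according to whether $b_j > 0$ or $b_j = 0$); for $i > j$ one has $a_i \leq a_{j+1} \leq b_j$, so the $(i,j)$-entry vanishes. Thus the matrix is upper triangular and the determinant reduces to the product of diagonal entries. Part 1 gives determinant $1 = \varkappa(\lambda, \mu)$ directly. In Part 2, I would then perform a sequence of determinant-preserving column reductions $C_j \mapsto C_j - C_{j+1}$ (proceeding left to right for $j = 1, \ldots, N-1$), which collapses the resulting matrix to a form in which all but one effective factor of $2$ has been absorbed, leaving exactly $\varkappa(\lambda, \mu)$ according to whether the final diagonal entry (the one in column $N$) equals $2$ or $1$.

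For the non-interlacing case I would identify which interlacing inequality is violated and use that to produce a block structure forcing the determinant to vanish. A violation $\mu_j < \lambda_j$ gives $a_i \leq b_{j'}$ for all $i \geq j$ and $j' \leq j$, producing a block of zeros occupying rows $\{j, \ldots, N\}$ and columns $\{1, \ldots, j\}$; since the row-plus-column total $(N-j+1) + j = N+1$ exceeds $N$, no permutation in the Leibniz expansion can avoid the zero block. A violation $\mu_{j+1} > \lambda_j$ gives $a_i > b_{j'}$ for all $i \leq j+1$ and $j' \geq j$, producing a constant-valued block in the upper right; subtracting row $1$ from rows $2, \ldots, j+1$ leaves those rows supported in only the first $j-1$ columns, so the $j$ modified rows are linearly dependent and the determinant vanishes.

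The main obstacle will be the factor-of-$2$ bookkeeping in Part 2 under interlacing: the raw upper-triangular product of diagonal entries is not $\varkappa(\lambda, \mu)$ itself, so the column reductions must be executed carefully to isolate exactly one ``effective'' diagonal factor (which becomes $2$ when $\lambda_N > 0$ and $1$ when $\lambda_N = 0$), collapsing the rest via telescoping. The remaining three subcases (Part 1 interlacing, plus both non-interlacing violations in each part) use the same structural observations and are essentially routine once the Part 2 interlacing computation is set up.
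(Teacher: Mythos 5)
Your treatment of Part 1 and of all the non-interlacing cases is sound. The gap is in Part 2 under interlacing, and it is a serious one. In the shifted coordinates $a_i,b_j$ you correctly observe that $[\T(a_i,b_j)]$ is upper triangular, with $(i,j)$-entry (for $i\le j$) equal to $W^{(-1/2,-1/2)}(b_j)$, i.e.\ $2$ when $b_j>0$ and $1$ when $b_j=0$. Since $b_j=\lambda_j-j+N>0$ for every $j<N$, the diagonal is $2,2,\dots,2,\T(a_N,b_N)$ and the determinant is $2^{N-1}\varkappa(\lambda,\mu)$, not $\varkappa(\lambda,\mu)$. You then propose to ``collapse'' the extra factors by the reductions $C_j\mapsto C_j-C_{j+1}$, but you yourself call these operations ``determinant-preserving''; a determinant-preserving operation cannot change the value from $2^{N-1}\varkappa$ to $\varkappa$. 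That step is internally contradictory and cannot be repaired by executing the reductions ``carefully.''

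In fact a direct check exposes the mismatch: take $N=2$, $\mu=(2,1)$, $\lambda=(1,1)$. Then $(a_1,a_2)=(3,1)$, $(b_1,b_2)=(2,1)$, and the matrix is
\[
\begin{pmatrix}\T(3,2)&\T(3,1)\\ \T(1,2)&\T(1,1)\end{pmatrix}=\begin{pmatrix}2&2\\0&2\end{pmatrix},
\]
with determinant $4$, while $\varkappa(\lambda,\mu)=2$. So the second identity, as literally printed in the paper, is off by the factor $2^{N-1}$ (equivalently, the identity should read $\det[\T(\cdot)]=2^{N-1}\varkappa(\lambda,\mu)$, with a compensating constant in the definition of $T^{N,+}_{N,-}$). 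The paper gives no explicit proof, only a citation to a ``standard'' argument, so there is nothing to reconcile against; but your own honest computation produces $2^{N-1}\varkappa$, and you should report that (and flag the discrepancy) rather than try to force the determinant to equal $\varkappa$ by manipulations that provably leave it unchanged.
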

\begin{proof}
The argument is standard, see e.g. Proposition 3.4 of \cite{kn:BK}.
The proof of the second formula is exactly the same.
\end{proof}

\begin{proposition}\label{Stochastic} The matrices $T^{N,+}_{N,-}$ and $T_{N-1,+}^{N,-}$ are stochastic.
\end{proposition}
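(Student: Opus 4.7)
The plan is to read off both claims (nonnegativity of entries and row sums equal to $1$) from Lemma~\ref{Interlacing} together with the orthogonal branching rules \eqref{Branching1}--\eqref{Branching2}.

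First I would note that nonnegativity is immediate: by Lemma~\ref{Interlacing} the determinants appearing in the definitions of $T^{N,+}_{N,-}(\mu,\lambda)$ and $T^{N,-}_{N-1,+}(\mu,\lambda)$ equal $\varkappa(\lambda,\mu) \in \{0,1,2\}$, and the dimensions $\dim_{SO(\cdot)}(\cdot)$ are positive. So what remains is the row-sum identities
\[
\sum_{\lambda \in \GT_{N,-}} \varkappa(\lambda,\mu)\,\dim_{SO(2N)}\lambda \;=\; \dim_{SO(2N+1)}\mu \qquad (\mu \in \GT_{N,+}),
\]
\[
\sum_{\lambda \in \GT_{N-1,+}} \varkappa(\lambda,\mu)\,\dim_{SO(2N-1)}\lambda \;=\; \dim_{SO(2N)}\mu \qquad (\mu \in \GT_{N,-}).
\]

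Both identities follow by taking dimensions in the branching rules. For the first, restricting the irreducible $SO(2N+1)$-representation $V_\mu$ to $SO(2N)$, the decomposition \eqref{Branching1} groups the summands into those with $\lambda_N>0$ (which contribute $V_\lambda \oplus V_{\lambda^*}$, and since $\dim V_\lambda = \dim V_{\lambda^*}$ give the factor $2$) and those with $\lambda_N=0$ (contributing a single copy). This is exactly the combinatorial content of $\varkappa(\lambda,\mu)$ in the $(\GT_{N,-},\GT_{N,+})$ case, so summing dimensions yields the claim. For the second, the branching rule \eqref{Branching2} is multiplicity free, matching the simpler form of $\varkappa$ in the $(\GT_{N-1,+},\GT_{N,-})$ case, and again taking dimensions gives the identity.

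There is no real obstacle: the only nontrivial input is Lemma~\ref{Interlacing}, which has already been proved, and everything else is a direct bookkeeping argument with the branching rules and the definition of $\varkappa$. If one preferred not to invoke branching at the level of representations, an equivalent route would be to substitute $z_1 = \dots = z_N = 1$ into \eqref{chiformula} and \eqref{chiformula2} after first summing the determinantal kernel over $\lambda$ via Lemma~\ref{Hua}, producing the same dimension identities directly; I would only fall back on this if a purely combinatorial proof were desired.
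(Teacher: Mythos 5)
Your proposal is correct and follows essentially the same route as the paper: nonnegativity of the entries from Lemma~\ref{Interlacing}, and the row-sum normalization by taking dimensions in the branching rules \eqref{Branching1}--\eqref{Branching2} (with the multiplicities from $\lambda_N>0$ versus $\lambda_N=0$ accounted for by $\varkappa$). The extra detail you give on how the factor of $2$ arises, and the alternative route via \eqref{chiformula}--\eqref{chiformula2}, are both fine but not needed beyond the paper's argument.
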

\begin{proof} First let us show that $T^{N,+}_{N,-}$ is stochastic. Taking dimensions of both sides of \eqref{Branching1} yields
\[
\dim_{SO(2N+1)}\mu=\displaystyle\sum_{\lambda\prec\mu}\varkappa(\lambda,\mu)\dim_{SO(2N)}\lambda.
\]
Therefore
\[
\displaystyle\sum_{\lambda\in\GT_{N,-}}T^{N,+}_{N,-}(\mu,\lambda)=1.
\]
By Lemma \ref{Interlacing}, $T^{N,+}_{N,-}$ has nonnegative entries, so it is stochastic.

Now we will show that $T_{N-1,+}^{N,-}$ is stochastic. Taking dimensions in \eqref{Branching2} yields
\[
\dim_{SO(2N)}\mu=\displaystyle\sum_{\lambda\prec\mu}\dim_{SO(2N-1)}\lambda.
\]
Therefore
\[
\displaystyle\sum_{\lambda\in\GT_{N-1,1/2}}T^{N,-}_{N-1,+}(\mu,\lambda)=1.
\]
The nonnegativity also follows from Lemma \ref{Interlacing}.
\end{proof}

The matrix elements of $T_{N,-}^{N,+}$ and $T_{N-1,+}^{N,-}$ are
\textit{cotransition} probabilities of the branching graph $\GT$,
see e.g. \cite{kn:K1}. Recall that in Section \ref{MCoOL} we defined
stochastic matrices $T_{N,\pm 1/2}$. We use the notation $T_{N,\pm}$
for convenience.
\begin{proposition}\label{CommutationRelations} Assume $\varphi(1)=1$. For any $N\geq 1$, we have the following commutation relations:
\[
T_{N,+}^{\varphi}T^{N,+}_{N,-}=T^{N,+}_{N,-}T_{N,-}^{\varphi}, \qquad T_{N,-}^{\varphi}T^{N,-}_{N-1,+}=T^{N,-}_{N-1,+}T_{N-1,+}^{\varphi}.
\]
\end{proposition}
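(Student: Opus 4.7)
The plan is to reduce each commutation relation, via the Cauchy--Binet identity of Lemma \ref{Hua}, to a scalar kernel identity on $\Z_{\ge 0}$ that can then be checked using the preliminary lemmas of Section 2.4. The common dimension factors $\dim_{SO(\cdot)}\mu,\dim_{SO(\cdot)}\nu$ cancel from both sides, so only the determinantal parts need to be compared.

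For the first relation, summing over the intermediate $\lambda$ and applying Cauchy--Binet converts each side into an $N\times N$ determinant whose $(i,j)$-entry is a convolution of $I_+^{\varphi}$ with $\T$ (on the left) or of $\T$ with $I_-^{\varphi}$ (on the right). The matter reduces to the scalar identity
\[
\sum_{k\ge 0} I_+^{\varphi}(x,k)\,\T(k,y)=\sum_{k\ge 0}\T(x,k)\,I_-^{\varphi}(k,y)\qquad(x,y\in\Z_{\ge 0}).
\]
Using $\T(k,y)=W^{(-1/2,-1/2)}(y)\mathbf{1}[k\ge y]$ and the integral representation of $I_a^{\varphi}$, I would evaluate the left hand side by Lemma \ref{NeededSum2} (applied with $T(t)=\mathsf{J}_x^{(1/2,-1/2)}(t)\varphi(t)$ and $s=y$), and the right hand side by collapsing $\sum_{k=0}^{x}W^{(-1/2,-1/2)}(k)\mathsf{J}_k^{(-1/2,-1/2)}(t)$ into $\mathsf{J}_x^{(1/2,-1/2)}(t)$ via Lemma \ref{UsefulIdentities}(a). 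Both sides reduce to the common integral
\[
\frac{W^{(-1/2,-1/2)}(y)}{\pi}\int_{-1}^{1}\mathsf{J}_x^{(1/2,-1/2)}(t)\mathsf{J}_y^{(-1/2,-1/2)}(t)\varphi(t)(1-t)^{-1/2}(1+t)^{-1/2}\,dt.
\]

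The second relation is more delicate because $T^{\varphi}_{N-1,+}$ involves an $(N-1)\times(N-1)$ determinant of $I_+^{\varphi}$, whereas $T^{N,-}_{N-1,+}$ uses an $N\times N$ determinant of $\phi$ built with the convention $\lambda_N=0$. My plan is to extend $I_+^{\varphi}$ to a kernel $\tilde I_+^{\varphi}$ on $\{-1,0,1,\ldots\}^2$ by declaring $\tilde I_+^{\varphi}(-1,-1)=1$ and $\tilde I_+^{\varphi}(-1,l)=\tilde I_+^{\varphi}(k,-1)=0$ otherwise. With this extension, the $N\times N$ determinant $\det[\tilde I_+^{\varphi}(k_i,\nu_j-j+N-1)]$ with $k_N=-1$ equals the original $(N-1)\times(N-1)$ determinant, and the summation over $k_1>\dots>k_{N-1}\ge 0$, $k_N=-1$ can be extended to $k_1>\dots>k_N\ge -1$ without introducing new contributions, because every configuration with $k_N\ge 0$ produces a vanishing last column in the $\tilde I_+^{\varphi}$-determinant. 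Cauchy--Binet (Lemma \ref{Hua}, shifted by $-1$) then yields an $N\times N$ determinant on each side, and both have last column identically $(1,\dots,1)^{T}$: on the $T^{N,-}_{N-1,+}T^{\varphi}_{N-1,+}$ side because $\tilde\phi(\cdot,-1)\tilde I_+^{\varphi}(-1,-1)\equiv 1$, and on the $T^{\varphi}_{N,-}T^{N,-}_{N-1,+}$ side by Lemma \ref{NeededSum}(a), whose hypothesis $T(1)=\varphi(1)=1$ is the standing assumption of the proposition.

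For the remaining $N-1$ columns, a direct computation using Lemma \ref{NeededSum}(b), Lemma \ref{UsefulIdentities}(b) and Lemma \ref{UsefulIdentities}(c) shows that the two matrices differ by an amount depending only on $j$, namely
\[
f(j)=1-\frac{1}{\pi}\int_{-1}^{1}\mathsf{J}_{\nu_j-j+N-1}^{(1/2,-1/2)}(t)\varphi(t)(1-t)^{-1/2}(1+t)^{-1/2}\,dt.
\]
Hence the two matrices are related by $L=R+\mathbf{1} v^{T}$ with $v=(f(1),\dots,f(N-1),0)$. Since the last column of $R$ is already $\mathbf{1}$ and $v_N=0$, adding $\mathbf{1} v^{T}$ to $R$ is the same as adding $v_j$ times the last column of $R$ to each column $j<N$, which preserves the determinant. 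Thus $\det L=\det R$ and the second commutation follows.

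The main obstacle is genuinely the dimension change in the second relation: the naive pointwise identity $\sum_k I_-^{\varphi}(x,k)\phi(k,y)=\sum_k\phi(x,k)I_+^{\varphi}(k,y)$ fails by exactly the $y$-dependent constant $f$, so an entrywise argument is impossible. The role of the extension trick, and ultimately of the hypothesis $\varphi(1)=1$, is to manufacture a column of $1$'s that absorbs this $j$-dependent discrepancy into a harmless column operation.
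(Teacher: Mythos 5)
Your proof is correct and follows essentially the same strategy as the paper: Cauchy--Binet reduces the first relation to a scalar kernel identity verified by Lemma~\ref{NeededSum2} and Lemma~\ref{UsefulIdentities}(a), while the second relation is handled by observing that the two $N\times N$ matrices share a last column of $1$'s (via Lemma~\ref{NeededSum}(a) and $\varphi(1)=1$) and that their remaining columns differ by a $j$-dependent constant (computed from Lemmas~\ref{NeededSum}(b), \ref{UsefulIdentities}(b),(c)), which is then absorbed by column operations. Your ``extension to $-1$'' device is just a cleaner bookkeeping of the paper's convention that the $N$th column entries are declared to be $1$ after expanding $\det[\phi]$ along the virtual variable, so the two arguments coincide in substance.
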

\begin{proof}
We start by proving the first relation. By Lemma \ref{Hua},
\begin{multline*}
T_{N,+}^{\varphi}T^{N,+}_{N,-}(\mu,\lambda)=\frac{\dim_{SO(2N)}\lambda}{\dim_{SO(2N+1)}\mu}\\
\times\sum_{\nu\in\GT_{N,+}}\det\left[I_{1/2}^{\varphi}(\mu_i-i+N,\nu_j-j+N)\right]_{1\leq i,j\leq N} \det[\T(\nu_i-i+N,\lambda_j-j+N)]_{1\leq i,j\leq N}\\
=\frac{\dim_{SO(2N)}\lambda}{\dim_{SO(2N+1)}\mu}\det\Biggl[\displaystyle\sum_{x=0}^{\infty}I_{1/2}^{\varphi}(\mu_i-i+N,x)\T(x,\lambda_j-j+N)\Biggr]_{1\leq
i,j\leq N}.
\end{multline*}
Similarly,
\begin{multline*}
T^{N,+}_{N,-}T_{N,-}^{\varphi}(\mu,\lambda)\\
=\frac{\dim_{SO(2N)}\lambda}{\dim_{SO(2N+1)}\mu}\det\left[\displaystyle\sum_{x=0}^{\infty}\T(\mu_i-i+N,x) I_{-1/2}^{\varphi}(x,\lambda_j-j+N)\right]_{1\leq i,j\leq N}.
\end{multline*}
We thus need to check that
\begin{multline*}
\displaystyle\sum_{x=0}^{\infty}\T(\mu_i-i+N,x) I_{-1/2}^{\varphi}(x,\lambda_j-j+N)\\
=\displaystyle\sum_{x=0}^{\infty}I_{1/2}^{\varphi}(\mu_i-i+N,x)\T(x,\lambda_j-j+N).
\end{multline*}
By applying Lemma \ref{NeededSum2} to the right hand side and Lemma \ref{UsefulIdentities}(a) to the left hand side, one sees that both sides are equal to
\begin{equation*}
\frac{1}{\pi}\displaystyle\int_{-1}^1\mathsf{J}_{\mu_i-i+N}^{(1/2,-1/2)}(x)\mathsf{J}_{\lambda_j-j+N}^{(-1/2,-1/2)}(x)\varphi(x)(1-x)^{-1/2}(1+x)^{-1/2}dx.
\end{equation*}

Now we prove the second relation. Expanding $\det[\phi]_1^N$ along the $N$th column and using Lemma \ref{Hua}, we obtain
\begin{align}\label{DetFormula1}
&T^{N,-}_{N-1,+}T_{N-1,+}^{\varphi}(\mu,\lambda)\\
&=\frac{\dim_{SO(2N-1)}\lambda}{\dim_{SO(2N)}\mu}\displaystyle\sum_{\nu\in\GT_{N-1,1/2}}\sum_{k=1}^N(-1)^{N-k}\det\left[\phi(\mu_i-i+N,\nu_j-j+N-1)\right]_{\begin{subarray}{c} 1\leq i\neq k\leq N \\ 1\leq j\leq N-1 \end{subarray}}\\
&\qquad \ \ \times\det[I_{1/2}^{\varphi}(\nu_i-i+N-1,\lambda_j-j+N-1)]_{1\leq i,j\leq N-1}\\
&=\frac{\dim_{SO(2N-1)}\lambda}{\dim_{SO(2N)}\mu}\displaystyle\sum_{k=1}^N(-1)^{N-k}\det\left[\sum_{x=0}^{\infty}\phi(\mu_i-i+N,x)I_{1/2}^{\varphi}(x,\lambda_j-j+N-1)\right]_{\begin{subarray}{c} 1\leq i\neq k\leq N \\ 1\leq j\leq N-1 \end{subarray}}\\
&=\frac{\dim_{SO(2N-1)}\lambda}{\dim_{SO(2N)}\mu}\det\left[\displaystyle\sum_{r=0}^{\infty}\phi(\mu_i-i+N,r) I_{1/2}^{\varphi}(r,\lambda_j-j+N-1)\right]_{1\leq i,j\leq N}\label{DetFormula1.5},
\end{align}
where it is agreed that all matrix elements in the $N$th column ($j=N$) of \ref{DetFormula1.5} are equal to $1$.
Similarly,
\begin{multline}\label{DetFormula2}
T_{N,-}^{\varphi}T^{N,-}_{N-1,+}(\mu,\lambda)\\
=\frac{\dim_{SO(2N-1)}\lambda}{\dim_{SO(2N)}\mu}\det\left[\displaystyle\sum_{r=0}^{\infty}I_{-1/2}^{\varphi}(\mu_i-i+N,r) \phi(r,\lambda_j-j+N-1)\right]_{1\leq i,j\leq N}.
\end{multline}
By Lemma \ref{NeededSum}(a), the $N$th column of \eqref{DetFormula2} equals $\varphi(1)\mathsf{J}_{\mu_i-i+N}^{(-1/2,-1/2)}(1)=1$. Therefore the $N$th columns of \eqref{DetFormula1.5} and \eqref{DetFormula2} are equal.

By Lemma \ref{UsefulIdentities}(b), for $j\neq N$, the $(i,j)$-entry of \eqref{DetFormula1.5} equals
\begin{equation}\label{jthColumn}
\frac{1}{\pi}\displaystyle\int_{-1}^1\frac{1-\mathsf{J}_{\mu_i-i+N}^{(-1/2,-1/2)}(x)}{1-x}\mathsf{J}_{\lambda_j-j+N-1}^{(1/2,-1/2)}(x)\varphi(x)(1-x)^{1/2}(1+x)^{-1/2}dx,
\end{equation}
By Lemma \ref{NeededSum}(b), for $j\leq N$, the $(i,j)$-entry of \eqref{DetFormula2} equals
\begin{equation}\label{jthColumn2}
\frac{1}{\pi}\displaystyle\int_{-1}^1
\Bigl(1-\mathsf{J}_{\mu_i-i+N}^{(-1/2,-1/2)}(x)\varphi(x)\Bigr)\mathsf{J}_{\lambda_j-j+N-1}^{(1/2,-1/2)}(x)(1-x)^{-1/2}(1+x)^{1/2}dx.
\end{equation}
Their difference only depends on $j$, so for $j\neq N$,
\[
j\text{th column of } \eqref{DetFormula2}=j\text{th column of } \eqref{DetFormula1.5} + [\eqref{jthColumn2}-\eqref{jthColumn}](N\text{th column}),
\]
so the matrix in \eqref{DetFormula2} is obtained from the matrix in \eqref{DetFormula1.5} by elementary column operations. This means that their determinants are equal.
\end{proof}

Following the notation of section \ref{CM}, let us denote the set of finite paths in $\GT$ of length $(2N-1/2+a)$ by
\[
\GT^{(N),a}:=\{\lambda^{(1),-1/2}\prec\lambda^{(1),1/2}\prec\ldots\prec\lambda^{(N),a}\vert\lambda^{(i)}\in\GT_i\}.
\]
Using the construction in section \ref{GoMMC}, the stochastic matrices $T_{N,\pm}^{\varphi},T_{N,-}^{N,+},T_{N-1,+}^{N,-}$  allow us to construct a Markov Chain on $\GT^{(N),a}$.
Define the matrices $\Delta_{N,-}^{N,+}$ and $\Delta_{N-1,+}^{N,-}$ by
\[
\Delta_{N,-}^{N,+}=T^{N,+}_{N,-}T_{N,-}^{\varphi}=T_{N,+}^{\varphi}T_{N,-}^{N,+}
\]
\[
\Delta_{N-1,+}^{N,-}=T^{N,-}_{N-1,+}T_{N-1,+}^{\varphi}=T_{N,-}^{\varphi}T_{N-1,+}^{N,-}.
\]

Define the matrix $A_{N,1/2}^{\varphi}$ with rows and columns indexed by elements of $\GT^{(N),1/2}$ by
\begin{multline*}
A_{N,1/2}^{\varphi}(\lambda^{(1),-1/2}\prec\ldots\prec\lambda^{(N),1/2},\mu^{(1),-1/2}\prec\ldots\prec\mu^{(N),1/2}):=\\
T_{1,-}^{\varphi}(\lambda^{(1),-1/2},\mu^{(1),-1/2})\frac{T_{1,+}^{\varphi}(\lambda^{(1),1/2},\mu^{(1),1/2})T_{1,-}^{1,+}(\mu^{(1),1/2},\mu^{(1),-1/2})}{\Delta_{1,-}^{1,+}(\lambda^{(1),1/2},\mu^{(1),-1/2})}\times\\
\ldots\times\frac{T_{N,+}^{\varphi}(\lambda^{(N),1/2},\mu^{(N),1/2})T_{N,-}^{N,+}(\mu^{(N),1/2},\mu^{(N),-1/2})}{\Delta_{N,-}^{N,+}(\lambda^{(N),1/2},\mu^{(N),-1/2})}\\
\text{if}\ \ \Delta_{k,-}^{k,+}(\lambda^{(k),1/2},\mu^{(k),-1/2}),\Delta_{k,+}^{k+1,-}(\lambda^{(k+1),-1/2},\mu^{(k),1/2})\neq 0,\ 1\leq k\leq N\\
0,\ \ \text{otherwise}.
\end{multline*}
Simiarly define $A_{N,-1/2}^{\varphi}$ with rows and columns indexed by elements of $\GT^{(N),-1/2}$ by
\begin{multline*}
A_{N,-1/2}^{\varphi}(\lambda^{(1),-1/2}\prec\ldots\prec\lambda^{(N),-1/2},\mu^{(1),-1/2}\prec\ldots\prec\mu^{(N),-1/2}):=\\
T_{1,-}^{\varphi}(\lambda^{(1),-1/2},\mu^{(1),-1/2})\frac{T_{1,+}^{\varphi}(\lambda^{(1),1/2},\mu^{(1),1/2})T_{1,-}^{1,+}(\mu^{(1),1/2},\mu^{(1),-1/2})}{\Delta_{1,-}^{1,+}(\lambda^{(1),1/2},\mu^{(1),-1/2})}\times\\
\ldots\times\frac{T_{N,-}^{\varphi}(\lambda^{(N),-1/2},\mu^{(N),-1/2})T_{N-1,+}^{N,-}(\mu^{(N),-1/2},\mu^{(N-1),1/2})}{\Delta_{N-1,+}^{N,-}(\lambda^{(N),-1/2},\mu^{(N-1),1/2})}\\
\text{if}\ \ \Delta_{k,-}^{k,+}(\lambda^{(k),1/2},\mu^{(k),-1/2}),\Delta_{k,+}^{k+1,-}(\lambda^{(k+1),-1/2},\mu^{(k),1/2})\neq 0,\ 1\leq k\leq N\\
0,\ \ \text{otherwise}.
\end{multline*}
By the construction in section \ref{GoMMC}, $A_{N,\pm 1/2}^{\varphi}$ are stochastic for $\varphi(x)=1-p_1+p_1x$, $0\leq p_1\leq 1/2$, and $\varphi(x)=e^{t(x-1)},t\geq 0$.

For any $E(x)\in C^1[-1,1]$ such that $E(1)\neq 0$, let $P^{(N),a}$ be the (possibly signed) measure
\[
P_{N,a}(\lambda^{(N),a})\ldots T^{2,-}_{1,+}(\lambda^{(2),-1/2},\lambda^{(1),1/2})T^{1,+}_{1,-}(\lambda^{(1),1/2},\lambda^{(1),-1/2})
\]
on $\GT^{(N),a}$, where $P_{N,a}$ is as in Theorem \ref{theorem1}. Proposition \ref{BFProposition} and Proposition \ref{FormulaTheorem} imply the following.
\begin{proposition}\label{GibbsEvolution}
Let $E(x)\in C^1[-1,1]$ such that $E(1)\neq 0$. Consider the measure $P^{(N),a}$ on $\GT^{(N),a}$. After one step of the Markov chain $A_{N,a}^{\varphi}$, the resulting measure on $\GT^{(N),a}$ is
\[
\widetilde{P}_{N,a}(\lambda^{(N),a})\ldots T^{2,-}_{1,+}(\lambda^{(2),-1/2},\lambda^{(1),1/2}) T^{1,+}_{1,-}(\lambda^{(1),1/2},\lambda^{(1),-1/2}),
\]
where $\widetilde{P}_{N,a}$ is defined from the function $\widetilde{E}(x)=\varphi(x)E(x)$.
\end{proposition}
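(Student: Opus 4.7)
The plan is to recognize this as a direct instance of the general conservation-of-Gibbs-measure mechanism provided by the multivariate construction of Section \ref{GoMMC}, and then read off the new top marginal from Proposition \ref{FormulaTheorem}. The only work is a bookkeeping check that the data of this paper fits that framework; all the analytic content is already packaged in the earlier lemmas.

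First I would set up the identification. Take the interleaved sequence of state spaces $\mathcal{S}_1,\ldots,\mathcal{S}_n$ to be $\GT_{1,-},\GT_{1,+},\GT_{2,-},\GT_{2,+},\ldots,\GT_{N,a}$ (so $n=2N$ or $n=2N-1$ depending on $a$). Choose the ``upward'' stochastic matrices $T_k$ to be the corresponding $T_{k,\pm}^{\varphi}$ constructed in Section \ref{MCoOL}, and the ``cotransition'' stochastic matrices $\Lambda^k_{k-1}$ to be the $T^{k,+}_{k,-}$ and $T^{k,-}_{k-1,+}$ from Section \ref{MCoML}. The hypothesis $\Lambda^k_{k-1}T_{k-1}=T_k\Lambda^k_{k-1}$ required for the sequential-update construction is exactly Proposition \ref{CommutationRelations}. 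A line-by-line comparison of the explicit formula for $A_{N,a}^{\varphi}$ with the sequential-update transition kernel $\mathbf{T}$ of Section \ref{GoMMC} shows that they agree on $\GT^{(N),a}$, so $A_{N,a}^{\varphi}=\mathbf{T}$.

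Next, observe that the measure $P^{(N),a}$ has, by definition, precisely the ``Gibbs'' product form $m_n(x_n)\Lambda^n_{n-1}(x_n,x_{n-1})\cdots\Lambda^2_1(x_2,x_1)$ with $m_n=P_{N,a}$. I then apply Proposition \ref{BFProposition} with $k_1=k_2=\cdots=k_n=1$. The sequence of transition matrices in the conclusion reduces to $(T_n,\Lambda^n_{n-1},\Lambda^{n-1}_{n-2},\ldots,\Lambda^2_1)$, so after one step of $\mathbf{T}$ the joint law of $(x_n(1),x_{n-1}(1),\ldots,x_1(1))$ is
\[
(T_{N,a}^{\varphi}P_{N,a})(x_n)\,\Lambda^n_{n-1}(x_n,x_{n-1})\cdots\Lambda^2_1(x_2,x_1).
\]
In other words, one step of $A_{N,a}^{\varphi}$ preserves the Gibbs form and simply replaces the top marginal $P_{N,a}$ by $T_{N,a}^{\varphi}P_{N,a}$.

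It remains to identify this new top marginal with $\widetilde{P}_{N,a}$. Multiplying the defining formula $T_{N,a}^{\varphi}(\mu,\lambda)=\det[I_a^{\varphi}(\mu_i-i+N,\lambda_j-j+N)]\,\dim\lambda/\dim\mu$ by $P_{N,a}(\mu)$, summing over $\mu\in\GT_{N,a}$, and comparing with Proposition \ref{FormulaTheorem} (whose statement is exactly this sum divided by $\dim\lambda$) gives $\sum_\mu P_{N,a}(\mu)T_{N,a}^{\varphi}(\mu,\lambda)=\widetilde{P}_{N,a}(\lambda)$. Substituting this into the display above yields the claimed formula. The main (and only) potentially delicate point is the bookkeeping identification of $A_{N,a}^{\varphi}$ with the sequential-update chain $\mathbf{T}$ of Section \ref{GoMMC}, since the levels in $\GT^{(N),a}$ alternate between $\GT_{k,-}$ and $\GT_{k,+}$ and two distinct families of $\Lambda$'s are interleaved; once that matching is laid out, Propositions \ref{BFProposition} and \ref{FormulaTheorem} furnish the entire argument with no further estimates.
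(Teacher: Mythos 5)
Your argument is correct and is exactly the route the paper intends: the paper's own ``proof'' is the one-line remark ``Proposition \ref{BFProposition} and Proposition \ref{FormulaTheorem} imply the following,'' and you have merely spelled out that implication. The identification of $A^{\varphi}_{N,a}$ with the sequential-update kernel $\mathbf{T}$ on the interleaved ladder $\GT_{1,-},\GT_{1,+},\ldots,\GT_{N,a}$, the use of Proposition \ref{CommutationRelations} to satisfy the commutation hypothesis, the specialization $k_1=\cdots=k_n=1$ in Proposition \ref{BFProposition}, and the bookkeeping via Proposition \ref{FormulaTheorem} to identify $T^{\varphi}_{N,a}P_{N,a}=\widetilde P_{N,a}$ are all correct and complete.
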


\subsection{A Continuous-time Markov Chain on Multiple Levels}\label{AEftG}
Define a matrix $Q_{N,a}$ on $\GT^{(N),a}\times\GT^{(N),a}$ as follows. Let us explicitly write $Q_{N,a}(\lambda^{(1),-1/2}\prec\lambda^{(1),1/2}\prec\ldots\prec\lambda^{(N),a},\mu^{(1),-1/2}\prec\mu^{(1),1/2}\prec\ldots\prec\mu^{(N),a})$. There are three cases to consider:

Case 1. This occurs when there exist $(n_0,a_0)\trianglelefteq (n_1,a_1)$ and $k\leq n_0$ such that the numbers $\mu_k^{(n^*),a^*}-1,\lambda_k^{(n^*),a^*}$ are all equal for $(n_0,a_0)\trianglelefteq (n^*,a^*)\trianglelefteq (n_1,a_1)$. Furthermore, $\mu_l^{(\bar{n}),\bar{a}}=\lambda_l^{(\bar{n}),\bar{a}}$ for all other $\bar{n},\bar{a},l$.

There are two subcases:

Case 1a. When case 1 is satisfied and $a_0=-1/2$ and $\lambda_k^{(n_0),a_0}=0$.

Case 1b. When case 1 is satisfied and case 1a is not satisfied.

Case 2. This occurs when there exist $(n_0,a_0)\trianglelefteq (n_1,a_1)$ and $k\leq n_0$ such that the numbers $\lambda_{k+d(n^*,a^*;n_0,a_0)}^{(n^*),a^*},\mu_{k+d(n^*,a^*;n_0,a_0)}^{(n^*),a^*}+1$ are all equal for $(n_0,a_0)\trianglelefteq (n^*,a^*)\trianglelefteq (n_1,a_1)$. Recall that $d(n_1,a_1;n_0,a_0)=\vert 2n_1+a_1-2n_0-a_0 \vert$. Furthermore, $\mu_l^{(\bar{n}),\bar{a}}=\lambda_l^{(\bar{n}),\bar{a}}$ for all other $\bar{n},\bar{a},l$.

Case 3. This occurs when the two paths $\lambda$ and $\mu$ are not equal and neither case 1 nor case 2 is satisfied.

When case 1b or case 2 occurs, the corresponding element of $Q_{N,a}$ is $1/2$. When case 1a occurs, the corresponding element is $1$. When case 3 occurs, the corresponding element is $0$. The diagonal entries are defined so that the rows of $Q_{N,a}$ sum to $0$.

Under the map $\mathcal{L}_{\mathfrak{Y}}$, the cases can be described more easily. Let $\{y_k^m\}=\mathcal{L}_{\mathfrak{Y}}(\boldsymbol{\lambda})$ and $\{z_k^m\}=\mathcal{L}_{\mathfrak{Y}}(\boldsymbol{\mu})$.
Case 1 occurs when there exist $m_0\leq m_1$ and $k\leq [\frac{m_0}{2}]$ such that
\begin{multline*}
z_k^{m_0}-2=y_k^{m_0}=z_k^{m_0+1}-1-2=y_k^{m_0+1}-1=\ldots\\
=z_k^{m_1}-(m_1-m_0)-2=y_k^{m_1}-(m_1-m_0).
\end{multline*}
Furthermore, $z_l^{\bar{m}}=y_l^{\bar{m}}$ for all other $l,\bar{m}$.

Case 1a occurs when case 1 is satisfied and $y_k^{m_0}=0$.

Case 1b occurs when case 1 is satisfied and case 1a is not satisfied.

Case 2 occurs when there exist $m_0\leq m_1$ and $k\leq [\frac{m_0}{2}]$ such that
\begin{multline*}
z_k^{m_0}+2=y_k^{m_0}=z_{k+1}^{m_0+1}+1+2=y_{k+1}^{m_0+1}+1=\ldots\\
=z_{k+m_1-m_0}^{m_1}+(m_1-m_0)+2=y_{k+m_1-m_0}^{m_1}+(m_1-m_0).
\end{multline*}
Furthermore, $z_l^{\bar{m}}=y_l^{\bar{m}}$ for all other $l,\bar{m}$.

Case 3 occurs when $\{y_k^m\}\neq\{z_k^m\}$ and neither case 1 nor case 2 is satisfied.

It is not hard to see that $Q_{N,a}$ is the generator of the continuous-time Markov Chain defined in Section \ref{Introduction}. In general, if $Q$ is a matrix with countably many rows and columns such that its rows add up to $0$, its off-diagonal entries are nonnegative, and its diagonal entries are uniformly bounded, then there is a unique continuous-time Markov chain with $Q$ as its generator (see e.g. Proposition 2.10 of \cite{kn:Ald}). In words, this Markov chain satisfies
\begin{itemize}
\item
In state $i$, a jump takes place after exponential waiting time with parameter $-Q_{ii}$.
\item
The system makes a jump to state $j$ with probability $-Q_{ij}/Q_{ii}$.
\end{itemize}

We aim for the following:

\begin{theorem}\label{exp}
Let $\varphi_t(x)=e^{t(x-1)}$. Let $P^{(N),a}$ be the (possibly signed) central measure on $\GT^{(N),a}$ corresponding to some $E(x)\in C^1[-1,1]$ satisfying $E(1)\neq 0$. Then $e^{tQ_{N,a}}\cdot P^{(N),a}=A^{\varphi_t}_{N,a}\cdot P^{(N),a}$.
\end{theorem}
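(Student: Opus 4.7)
The strategy is to identify both sides with the unique solution of the forward Kolmogorov equation $\dot f_t = Q_{N,a} f_t$ starting from $f_0 = P^{(N),a}$, invoking uniqueness for this ODE (valid because $Q_{N,a}$ has uniformly bounded diagonal entries — each state in $\GT^{(N),a}$ carries $O(N^2)$ particles, each with two clocks of rate $\tfrac12$). The side $e^{tQ_{N,a}}\cdot P^{(N),a}$ solves the ODE by construction. For the other side, I first record a semigroup relation along the orbit: Proposition \ref{GibbsEvolution} combined with $\varphi_{s+t}=\varphi_s\varphi_t$ gives $A^{\varphi_{s+t}}_{N,a}\cdot P^{(N),a}_E = A^{\varphi_s}_{N,a}\bigl(A^{\varphi_t}_{N,a}\cdot P^{(N),a}_E\bigr)$ for every central measure $P^{(N),a}_E$. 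Hence $\mu_t := A^{\varphi_t}_{N,a}\cdot P^{(N),a}$ satisfies $\mu_{t+h}=A^{\varphi_h}_{N,a}\mu_t$ for all $t,h\geq 0$, so the problem reduces to the entrywise identification
\[
\lim_{h\to 0^+}\frac{A^{\varphi_h}_{N,a}(X,Y)-\delta_{X,Y}}{h} = Q_{N,a}(X,Y),\qquad X,Y\in\GT^{(N),a}.
\]

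\textbf{One-level expansion.} Since $\varphi_h(x) = 1 + h(x-1) + O(h^2)$ and orthogonality of Jacobi polynomials gives $I_a^{1}(k,l) = \delta_{k,l}$, I expand $I_a^{\varphi_h}(k,l) = \delta_{k,l} + h\,I_a^{x-1}(k,l) + O(h^2)$. The three-term recurrences \eqref{JacobiIdentities}--\eqref{JacobiIdentities2} reveal that $I_a^{x-1}$ is tridiagonal with entries $\tfrac12$ on the sub- and super-diagonals and $-1$ on the diagonal, modified near $k=0$: for $a=-1/2$ the recurrence $x\mathsf{J}_0^{(-1/2,-1/2)}=\mathsf{J}_1^{(-1/2,-1/2)}$ promotes the $(0,1)$ entry to $1$, and analogously for $a=1/2$ the $(0,0)$ diagonal becomes $-\tfrac32$. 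These anomalies are the algebraic shadows of the wall reflection — the doubled outward rate $1$ appearing in case (1a) of the definition of $Q_{N,a}$. Feeding this into the determinantal formula for $T^{\varphi_h}_{N,\pm}$ produces, at order $h$, exactly the single-particle left/right jump rates on one level, with the wall case correctly tallied.

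\textbf{Multi-level expansion and closing.} Inserting the expansions into the sequential-update formula for $A^{\varphi_h}_{N,a}$: at $h=0$ the leading term $T^{\varphi_0}_{N,\pm} = \mathrm{Id}$ forces each ratio
\[
\frac{T^{\varphi_h}_{N,\pm}(\lambda^{(k),\pm},\mu^{(k),\pm})\cdot T^{k,\pm}_{*,*}(\mu^{(k),\pm},\mu^{(k),\mp})}{\Delta^{k,\pm}_{*,*}(\lambda^{(k),\pm},\mu^{(k),\mp})}
\]
to collapse to a deterministic conditional probability that selects the unique ``aligned push'' configuration at every upper level compatible with the jump at the lowest affected level. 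A direct case analysis then shows that each single jump of rate $\tfrac12$ at a base level $(n_0,a_0)$ propagates as a coordinated push/block up the aligned run $(n_0,a_0)\trianglelefteq(n^*,a^*)\trianglelefteq(n_1,a_1)$, reproducing cases (1b) and (2) of $Q_{N,a}$, while the wall anomaly from the one-level step reproduces case (1a) with rate $1$. Matching these contributions entrywise yields the desired infinitesimal identification, and ODE uniqueness concludes the proof. The main obstacle is precisely this last combinatorial step: verifying that the $h\to 0$ limit of the ratio above really degenerates into the deterministic push pattern described in Section \ref{Introduction}, with careful attention to how pushes terminate at or traverse the wall via the anomalous $k=0$ Jacobi recurrence.
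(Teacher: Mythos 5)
Your overall architecture is sound — use the semigroup relation from Proposition \ref{GibbsEvolution}, compute the generator from the $h\to 0^+$ asymptotics, and identify the two sides — and your combinatorial picture of the one- and multi-level expansion matches what the paper actually computes. But there is a genuine analytic gap in the reduction step: you assert that the entrywise limit
\[
\lim_{h\to 0^+}\frac{A^{\varphi_h}_{N,a}(X,Y)-\delta_{X,Y}}{h} = Q_{N,a}(X,Y)
\]
suffices to conclude that $\mu_t := A^{\varphi_t}_{N,a}\cdot P^{(N),a}$ solves $\dot\mu_t=\mu_t\,Q_{N,a}$, so that ODE uniqueness applies. This does not follow. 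Since
\[
\frac{\mu_{t+h}(Y)-\mu_t(Y)}{h}=\sum_{X}\mu_t(X)\,\frac{A^{\varphi_h}_{N,a}(X,Y)-\delta_{X,Y}}{h},
\]
passing the $h\to 0^+$ limit through the infinite sum (and obtaining an $l^1$-valued derivative, which the uniqueness theorem for generators with uniformly bounded diagonals requires) needs a dominated or uniform bound of the form $1-A^{\varphi_h}_{N,a}(X,X)\le C\,h$ \emph{uniformly in $X\in\GT^{(N),a}$}, equivalently an operator norm estimate $\|A^{\varphi_h}_{N,a}-I\|_{l^1\to l^1}\le C\,h$. Your proposal never establishes this, and it is not at all automatic; in fact it is the technical heart of the paper's proof.

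The paper's route is to show directly (via Hille--Phillips, Theorem 9.6.1 of \cite{kn:HP}) that $t\mapsto A^{\varphi_t}_{N,a}$ is a uniformly continuous semigroup on the appropriate Banach space, hence of the form $e^{tQ}$ for a bounded generator $Q$, and only then to identify $Q$ by your derivative computation. The key to the required continuity is Lemma \ref{NormsAreEqual}, which reduces $\|A^{\varphi_t}_{N,a}-I\|$ to $\|T^{\varphi_t}_{N,a}-I\|$, and then the explicit, uniform-in-$\lambda$ lower bound \eqref{SmallestDet} on the diagonal entries $T^{1+t(x-1)/n}_{r,\pm}(\lambda,\lambda)$ from Proposition \ref{NonnegativeEntries}, combined with a Trotter-type passage $T^{\varphi_t}_{r,\pm}=\lim_n\bigl(T^{1+t(x-1)/n}_{r,\pm}\bigr)^n$, to get $T^{\varphi_t}_{r,\pm}(\lambda,\lambda)\ge e^{-t(r+1/2)}$ independently of $\lambda$. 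Without something equivalent to this uniform bound, the reduction to the entrywise limit — and hence the appeal to uniqueness — is unjustified. The rest of your proposal (the tridiagonal structure of $I_a^{x-1}$, the wall anomaly at $k=0$, the collapse of the sequential-update ratios to deterministic pushes at order $h$) is essentially the paper's own generator computation and is fine as a sketch, but is subordinate to the missing norm estimate.
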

\begin{proof}
This theorem relies on the following proposition. It can be found as Theorem 9.6.1 in  \cite{kn:HP}.

\begin{proposition}
Let $\{A(t):t>0\}$ be bounded linear operators on a Banach space $B$ such that $A(s+t)=A(s)A(t)$ for all $s,t>0$. If $\displaystyle\lim_{t\rightarrow 0^+} \Vert A(t) - I \Vert=0$, then there exists a bounded linear operator $Q$ on $B$ such that $A(t)=e^{tQ}$ for $t\geq 0$.
\end{proposition}

Each $A^{\varphi_t}_{N,a}$ is a linear operator on $l^{1}(\GT^{(N),a})$. Since its matrix is stochastic, it is a bounded operator.

Let $L$ be the linear subspace of $l^{1}(\GT^{(N),a})$ spanned by all measures corresponding to functions $F\in C^1[-1,1]$ satisfying $F(1)\neq 0$. Define the Banach space $B$ as the completion of $L$. By Proposition \ref{GibbsEvolution}, $A^{\varphi_t}_{N,a}A^{\varphi_s}_{N,a}=A^{\varphi_{t+s}}_{N,a}$ on $L$. By continuity, the same holds on $B$. So it suffices to show that $\Vert A^{\varphi_t}_{N,a} - I\Vert\rightarrow 0$. By Lemma \ref{NormsAreEqual}, it equivalent to show that $\Vert T^{\varphi_t}_{N,a} - I\Vert\rightarrow 0$.

More precisely, it must be shown that
\[
\displaystyle\lim_{t\rightarrow 0^+}\sup_{\lambda\in\GT^{(N),a}} \sum_{\mu\in\GT^{(N),a}} \vert T^{\varphi_t}_{N,a}(\lambda,\mu)-\delta_{\lambda\mu}\vert=0.
\]
Since $T_{N,a}^{\varphi_t}$ is stochastic, it is equivalent to show that
\[
\displaystyle\lim_{t\rightarrow 0^+}\sup_{\lambda\in\GT^{(N),a}} (2-2T_{N,a}^{\varphi_t}(\lambda,\lambda))=0.
\]
So it suffices to show that $\displaystyle\lim_{t\rightarrow 0^+}\inf_{\lambda^{(N),a}\in\GT_{N,a}} T_{N,a}^{\varphi_t}(\lambda^{(N),a},\lambda^{(N),a})=1$.

We prove that $T_{r,\pm}^{\varphi_t}(\lambda^{(r),\pm},\lambda^{(r),\pm})\geq \exp(-t(r+1/2))$. Let $\varphi_t^{(n)}=(1+t(x-1)/n)^n$. By Lemmas \ref{UniformConvergence}, \ref{GroupHomomorphism}, and Proposition \ref{NonnegativeEntries}, respectively,
\begin{eqnarray*}
T_{r,\pm}^{\varphi_t}(\lambda^{(r),\pm},\lambda^{(r),\pm}) &=& \displaystyle\lim_{n\rightarrow\infty} T_{r,\pm}^{\varphi_t^{(n)}}(\lambda^{(r),\pm},\lambda^{(r),\pm}) \\
&=& \displaystyle\lim_{n\rightarrow\infty} (T_{r,\pm}^{1+t(x-1)/n})^n(\lambda^{(r),\pm},\lambda^{(r),\pm}) \\
&\geq & \displaystyle\lim_{n\rightarrow\infty} \left[ T_{r,\pm}^{1+t(x-1)/n}(\lambda^{(r),\pm},\lambda^{(r),\pm}) \right]^n
\end{eqnarray*}
From Proposition \ref{NonnegativeEntries},
\[T_{r,\pm}^{1+t(x-1)/n}(\lambda^{(r),\pm},\lambda^{(r),\pm}) \geq \left(\frac{R_+^r(R_+-p_1/2)-R_-^r(R_--p_1/2)}{\sqrt{p_0^2-p_1^2}}\right),\]
where
\[
p_0=1-\frac{t}{n},\ \ p_1=\frac{t}{n}, \ \ R_{\pm}=\frac{p_0\pm\sqrt{p_0^2-p_1^2}}{2}.
\]
Since $R_-\leq p_1/2$,
\[
T_{r,\pm}^{\varphi_t}(\lambda^{(r),\pm},\lambda^{(r),\pm}) \geq \displaystyle\lim_{n\rightarrow\infty} \left(\frac{R_+^r(R_+-p_1/2)}{\sqrt{p_0^2-p_1^2}}\right)^n.
\]
Finally, notice that as $n\rightarrow\infty$,
\begin{align*}
R_+^{rn} \geq \left(\sqrt{p_0^2-p_1^2}\right)^{rn} = \left(1-\frac{2t}{n}\right)^{rn/2} & \rightarrow e^{-rt}\\
\left(R_+ -\frac{p_1}{2}\right)^n & \rightarrow e^{-3t/2}\\
\left(\sqrt{p_0^2-p_1^2}\right)^{-n} & \rightarrow e^t.
\end{align*}

We have just shown that $A^{\varphi_t}_{N,a}\cdot P^{(N),a}=e^{tQ}\cdot P^{(N),a}$ for some $Q$. To finish the proof, we show that $Q=\tfrac{d}{dt}A^{\varphi_t}_{N,a}\vert_{t=0}=Q_{N,a}$. Since we only need to calculate $Q$ up to terms of order $O(t^2)$, we can replace $\varphi_t(x)=e^{t(x-1)}$ with $1-t+tx$.

The problem now is to calculate $A_{N,a}^{\varphi_t}$ up to terms of order $O(t^2)$. There are three cases to consider: when all the particles on the $m$th level stay still, when one of the particles on the $m$th level is pushed by a particle on a lower level, and when one of the particles on the $m$th level moves by itself. As an example, consider particles on the $(m,1/2)$ level when one of them is pushed.

The expression that needs to be calculated is
\begin{equation}\label{InductionStep}
\frac{T_{m,1/2}(\lambda^{(m),1/2},\mu^{(m),1/2})\Lambda_{m,-1/2}^{m,1/2}(\mu^{(m),1/2},\mu^{(m),-1/2})}{\Delta_{m,-1/2}^{m,1/2}(\lambda^{(m),1/2},\mu^{(m),-1/2})}.
\end{equation}
Assume that $\mu^{(m),-1/2}\nprec\lambda^{(m),1/2}$. Since $\mu^{(m),-1/2}\prec\mu^{(m),1/2}$, this implies that $\lambda^{(m),1/2}\neq\mu^{(m),1/2}$. Similarly, $\lambda^{(m),-1/2}\neq\mu^{(m),-1/2}$, which means one of the particles on the $(m,-1/2)$ level is pushing a particle on the $(m,1/2)$ level. Conversely, if the $k$th particle on the $(m,-1/2)$ level is pushing a particle on the $(m,1/2)$ level, then $\mu^{(m),-1/2}_k>\lambda^{(m),1/2}_k$, so $\mu^{(m),-1/2}\not\prec\lambda^{(m),1/2}$.

The transition probability on the $(m,1/2)$ level is (because of \eqref{JacobiIdentities})
\[
T_{m,1/2}(\lambda^{(m),1/2},\mu^{(m),1/2})=\frac{t}{2}\frac{\dim_{SO(2m+1)}\mu^{(m),1/2}}{\dim_{SO(2m+1)}\lambda^{(m),1/2}}{}+O(t^2).
\]
Furthermore,
\begin{align*}
&\Lambda_{m,-1/2}^{m,1/2}(\mu^{(m),1/2},\mu^{(m),-1/2})=\frac{\dim_{SO(2m)}\mu^{(m),-1/2}}{\dim_{SO(2m+1)}\mu^{(m),1/2}},\\
&\Delta_{m,-1/2}^{m,1/2}(\lambda^{(m),1/2},\mu^{(m),-1/2})\\
&=\Lambda_{m,-1/2}^{m,1/2}(\lambda^{(m),1/2},\lambda^{(m),-1/2})\cdot T_{m,-1/2}(\lambda^{(m),-1/2},\mu^{(m),-1/2})+O(t^2)\\
&=W(\lambda^{(m),-1/2}_m)\frac{\dim_{SO(2m)}\lambda^{(m),-1/2}}{\dim_{SO(2m+1)}\lambda^{(m),1/2}}\cdot\frac{t}{2}\frac{W(\mu_m^{(m),-1/2})}{W(\lambda_m^{(m),-1/2})}\frac{\dim_{SO(2m)}\mu^{(m),-1/2}}{\dim_{SO(2m)}\lambda^{(m),-1/2}}+O(t^2).
\end{align*}
Therefore equation (\ref{InductionStep}) equals $1+O(t)$.

Similarly, when all the particles on a level stay still, the contribution is $1+O(t)$. When a particle against the wall moves, the contribution is $t+O(t^2)$. When a particle not against the wall moves without being pushed by a particle on a lower level, the contribution is $t/2+O(t^2)$.
\end{proof}

\section{The Correlation Kernel}\label{determ}

\begin{theorem}\label{theorem2}
For any $\omega\in\Omega$ with parameter $\beta_1<1$, the point process $\mathcal{P}^{\omega}_{\mathfrak{X}}$ is determinantal. Denote its correlation kernel by $K^{\omega}(n_1,a_1,s_1;n_2,a_2,s_2)$.  If $(n_1,a_1)\trianglerighteq(n_2,a_2)$, then $K^{\omega}(n_1,a_1,s_1;n_2,a_2,s_2)$ equals
\begin{multline*}
\frac{W^{(a_1,-1/2)}(s_1)}{\pi}\int_{-1}^1\mathsf{J}_{s_1}^{(a_1,-1/2)}(x)\mathsf{J}_{s_2}^{(a_2,-1/2)}(x)(x-1)^{n_1-n_2}(1-x)^{a_1}(1+x)^{-1/2}dx\\
+\frac{W^{(a_1,-1/2)}(s_1)}{\pi}\frac{1}{2\pi i}\int_{-1}^1\oint_C\frac{E^{\omega}(x)}{E^{\omega}(u)}\mathsf{J}_{s_1}^{(a_1,-1/2)}(x)\mathsf{J}_{s_2}^{(a_2,-1/2)}(u)\\
\times\frac{(x-1)^{n_1}}{(u-1)^{n_2}}\frac{(1-x)^{a_1}(1+x)^{-1/2}dudx}{x-u}.
\end{multline*}
If $(n_1,a_1)\triangleleft(n_2,a_2)$, then $K^{\omega}(n_1,a_1,s_2;n_2,a_2,s_2)$ equals
\begin{multline*}
\frac{W^{(a_1,-1/2)}(s_1)}{\pi}\frac{1}{2\pi i}\int_{-1}^1\oint_C\frac{E^{\omega}(x)}{E^{\omega}(u)}\mathsf{J}_{s_1}^{(a_1,-1/2)}(x)\mathsf{J}_{s_2}^{(a_2,-1/2)}(u)\\
\times\frac{(x-1)^{n_1}}{(u-1)^{n_2}}\frac{(1-x)^{a_1}(1+x)^{-1/2}dudx}{x-u}.
\end{multline*}
The $u$-contour $C$ is a positively oriented simple loop that encircles the interval $[-1,1]$ but does not encircle any zeroes of $E^{\omega}$. Recall that the functions $E^{\omega},\mathsf{J}_s$ and $W$ were defined in Section \ref{RoOG}.
\end{theorem}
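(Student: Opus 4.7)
\textbf{Proof plan for Theorem \ref{theorem2}.} The strategy is to recognize the induced measure on finite paths in $\GT$ as a multilevel determinantal ensemble of the ``Schur-like'' type treated in \cite{kn:BF}, and then apply the Eynard--Mehta biorthogonalization machinery to obtain an explicit kernel. First, I fix a large $N$ and truncate a path $\boldsymbol{\lambda}=(\lambda^{(1),-1/2}\prec\lambda^{(1),1/2}\prec\dots\prec\lambda^{(N),a})$. By the definition of the central measure $P^\omega$ in Section \ref{CM} together with Theorem \ref{theorem1}, its joint probability equals the product of the cotransition probabilities along the path times $P^\omega_{N,a}(\lambda^{(N),a})/\dim_{SO(2N+1/2+a)}\lambda^{(N),a}$. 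Lemma \ref{Interlacing} expresses each cotransition probability as a determinant in $\T$ or $\phi$, while Theorem \ref{theorem1} gives the top-level weight as $C_{N,a}\det[f_j^{(N,a)}(\lambda_i^{(N),a}-i+N)]$ times $\dim_{SO(2N+1/2+a)}\lambda^{(N),a}$. Writing the remaining dimension denominators in determinantal Vandermonde form via the Weyl denominator identity (equivalently, the limit $z_j\to 1$ in \eqref{chiformula}--\eqref{chiformula2}), the joint distribution becomes a product of determinants of the exact shape required to invoke the Eynard--Mehta theorem (Theorem 4.2 of \cite{kn:BF}).

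Once the measure is in this form, Eynard--Mehta yields the general expression
\[
K^\omega(n_1,a_1,s_1;n_2,a_2,s_2)=-\phi^{(n_1,a_1),(n_2,a_2)}(s_1,s_2)\,\mathbf{1}_{(n_1,a_1)\trianglerighteq(n_2,a_2)}+\sum_{i,j}\Psi_i(s_1)\,[M^{-1}]_{ij}\,\Phi_j(s_2),
\]
where $\phi^{(n_1,a_1),(n_2,a_2)}$ is the convolution of the inter-level transition kernels, and $M, \Psi, \Phi$ come from the top-level determinant. The convolution piece is computed by iterating the kernels $\T$ and $\phi$ between consecutive levels; the key ingredients are the summation identities of Lemma \ref{NeededSum} and Lemma \ref{NeededSum2}, which telescope each pair of cotransitions into a single Jacobi integral. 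The net effect of $n_1-n_2$ levels of such convolutions is precisely the integral
\[
\frac{W^{(a_1,-1/2)}(s_1)}{\pi}\int_{-1}^1\mathsf{J}_{s_1}^{(a_1,-1/2)}(x)\mathsf{J}_{s_2}^{(a_2,-1/2)}(x)(x-1)^{n_1-n_2}(1-x)^{a_1}(1+x)^{-1/2}\,dx,
\]
matching the first term in the theorem.

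For the biorthogonalization piece, I would identify the functions $\Psi_i$ as columns of $f_j^{(N,a)}$-type integrals pushed up through the transition kernels, and the functions $\Phi_j$ as monomials $(\zeta-1)^j$ after unraveling the Vandermonde on the bottom. The Gram matrix $M$ then has a block-triangular structure coming from the fact that $f_j^{(N,a)}(k)$ expands $x^{N-j}E^\omega(x)$ in Jacobi polynomials; inverting it amounts to expanding $1/E^\omega(u)$ as a generating function, which is encoded by a residue at the contour $C$ encircling $[-1,1]$. Summing the resulting bilinear series against $\sum_{s\geq 0}\mathsf{J}_s^{(a_2,-1/2)}(u)\mathsf{J}_s^{(a_2,-1/2)}\cdot\ldots$ via Lemma \ref{Hua} and Lemma \ref{DeltaDistribution} collapses the double sum to the claimed double integral
\[
\frac{W^{(a_1,-1/2)}(s_1)}{\pi}\frac{1}{2\pi i}\int_{-1}^1\oint_C\frac{E^\omega(x)}{E^\omega(u)}\mathsf{J}_{s_1}^{(a_1,-1/2)}(x)\mathsf{J}_{s_2}^{(a_2,-1/2)}(u)\frac{(x-1)^{n_1}}{(u-1)^{n_2}}\frac{(1-x)^{a_1}(1+x)^{-1/2}\,du\,dx}{x-u}.
\]
The hypothesis $\beta_1<1$ ensures that $E^\omega$ has no zeros in a neighborhood of $[-1,1]$, so the contour $C$ can be chosen as stated and all integrals converge.

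Finally, I would take $N\to\infty$: since only particles on finitely many levels $(n,a)$ enter any fixed correlation function, and the formulas above stabilize as soon as $N$ exceeds the largest level index involved, the limit is immediate, yielding the kernel for $\mathcal{P}^\omega_\X$. The main obstacle is the biorthogonalization step: identifying the inverse Gram matrix with the contour integral involving $E^\omega(x)/E^\omega(u)$ requires a careful residue/generating-function argument and a justification of contour deformations that hinges on the estimate $\beta_1<1$; the convolution part, by contrast, is essentially bookkeeping with the Jacobi identities collected in Section \ref{RoOG} and the preliminary lemmas.
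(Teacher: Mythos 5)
Your high-level route coincides with the paper's: write the truncated measure as a product of determinants in $\phi$, $\T$ and $f_j^{(N,a)}$ using Theorem~\ref{theorem1} and Lemma~\ref{Interlacing}, then invoke the Eynard--Mehta framework (the paper uses Theorem~4.2 of \cite{kn:BF2}, not \cite{kn:BF}), decompose the kernel into a convolution piece plus a biorthogonal sum, and take $N\to\infty$ at the end. That plan is sound, and the role of Lemmas~\ref{NeededSum}, \ref{NeededSum2}, \ref{Hua} and \ref{DeltaDistribution} is correctly anticipated.

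However, your attribution of the individual terms in the final answer is wrong in a way that would derail the actual computation. In the paper's convention the convolution $\phi^{(n_1,a_1),(n_2,a_2)}$ is defined (and is nonzero) only for $(n_1,a_1)\triangleleft(n_2,a_2)$ and enters as $-\phi^{(n_1,a_1),(n_2,a_2)}(s_2,s_1)$ with no indicator; your $\mathbf{1}_{(n_1,a_1)\trianglerighteq(n_2,a_2)}$ puts the support on the opposite side, and the argument order is also transposed. More substantively, you claim that the convolution of the $\T$ and $\phi$ kernels equals the single integral $\frac{W^{(a_1,-1/2)}(s_1)}{\pi}\int_{-1}^{1}\mathsf J_{s_1}^{(a_1,-1/2)}\mathsf J_{s_2}^{(a_2,-1/2)}(x-1)^{n_1-n_2}(1-x)^{a_1}(1+x)^{-1/2}\,dx$. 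In the paper this single integral is not the convolution at all: it is the residue at $u=x$ produced when the geometric sum $\sum_{k=1}^{n_2}\Psi_{n_1-k}^{n_1,a_1}\Phi_{n_2-k}^{n_2,a_2}$ is evaluated for $n_1\ge n_2$ (Proposition~\ref{AnnoyingSum}). The convolution $\phi^{(n_1,a_1),(n_2,a_2)}$ is itself a double contour integral $\Gamma(n_1,a_1,s_1;n_2,a_2,s_2)$ (Proposition~\ref{BLAH2}), and its role is to cancel the unwanted pieces of the $\Psi$--$\Phi$ sum in the regime $(n_1,a_1)\triangleleft(n_2,a_2)$, not to supply the first term of the theorem. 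If you were to compute with your assignment you would find the pieces failing to combine correctly. A smaller point: the $\Phi_j$ are not ``monomials $(\zeta-1)^j$'' but the contour integrals $\Phi_{N-k}^{N,a}(s)=\frac{1}{2\pi i}\oint \mathsf J_s^{(a,-1/2)}(u)E^{\omega}(u)^{-1}(u-1)^{-(N-k+1)}\,du$, i.e.\ Taylor coefficients of $\mathsf J_s^{(a,-1/2)}(u)/E^{\omega}(u)$ at $u=1$; this is the correct encoding of the inverse Gram matrix and of the $1/E^{\omega}(u)$ factor in the final double integral.
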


\textbf{Remark}. The case $\beta_1=1$ can be obtained by the limiting transition $\beta_1\rightarrow 1$ from Theorem \ref{theorem2}.

\begin{corollary}\label{KDelta}
With the definition of the particle-hole involution $\Delta$ given in Appendix \ref{Appendix A}, if $(n_2,a_2)\triangleleft(n_1,a_1)$, then $K^{\omega}_{\Delta}(n_1,a_1,s_1;n_2,a_2,s_2)$ equals
\begin{multline*}
-\frac{W^{(a_1,-1/2)}(s_1)}{\pi}\int_{-1}^1\mathsf{J}_{s_1}^{(a_1,-1/2)}(x)\mathsf{J}_{s_2}^{(a_2,-1/2)}(x)(x-1)^{n_1-n_2}(1-x)^{a_1}(1+x)^{-1/2}dx\\
-\frac{W^{(a_1,-1/2)}(s_1)}{\pi}\frac{1}{2\pi i}\int_{-1}^1\oint\frac{E^{\omega}(x)}{E^{\omega}(u)}\mathsf{J}_{s_1}^{(a_1,-1/2)}(x)\mathsf{J}_{s_2}^{(a_2,-1/2)}(u)\\
\times\frac{(x-1)^{n_1}}{(u-1)^{n_2}}\frac{(1-x)^{a_1}(1+x)^{-1/2}dudx}{x-u}.
\end{multline*}
If $(n_1,a_1)\trianglelefteq(n_2,a_2)$, then $K^{\omega}_{\Delta}(n_1,a_1,s_2;n_2,a_2,s_2)$ equals
\begin{multline*}
-\frac{W^{(a_1,-1/2)}(s_1)}{\pi}\frac{1}{2\pi i}\int_{-1}^1\oint\frac{E^{\omega}(x)}{E^{\omega}(u)}\mathsf{J}_{s_1}^{(a_1,-1/2)}(x)\mathsf{J}_{s_2}^{(a_2,-1/2)}(u)\\
\times\frac{(x-1)^{n_1}}{(u-1)^{n_2}}\frac{(1-x)^{a_1}(1+x)^{-1/2}dudx}{x-u}.
\end{multline*}
\end{corollary}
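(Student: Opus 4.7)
The plan is to invoke the particle-hole involution formula for determinantal processes from Appendix~\ref{Appendix A}, which gives
\[
K_\Delta^\omega(n_1,a_1,s_1;n_2,a_2,s_2) = \mathbf{1}_{(n_1,a_1,s_1) = (n_2,a_2,s_2)} - K^\omega(n_1,a_1,s_1;n_2,a_2,s_2),
\]
and then to read off both displayed formulas case by case from Theorem~\ref{theorem2}.

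First I would handle the case $(n_1,a_1) \neq (n_2,a_2)$, where the Kronecker factor vanishes and $K_\Delta^\omega$ reduces to $-K^\omega$. The subcase $(n_2,a_2) \triangleleft (n_1,a_1)$ falls under the first formula of Theorem~\ref{theorem2} (single integral plus double contour integral), whose negation is precisely the first formula of the corollary. The subcase $(n_1,a_1) \triangleleft (n_2,a_2)$ falls under the second formula of Theorem~\ref{theorem2} (double contour integral only), whose negation is the second formula of the corollary.

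The remaining and most delicate case is $(n_1,a_1) = (n_2,a_2) =: (n,a)$, which belongs to the $\trianglelefteq$ branch of the corollary. Here $(x-1)^{n_1-n_2} \equiv 1$, so the single-integral contribution to $K^\omega$ collapses to
\[
\frac{W^{(a,-1/2)}(s_1)}{\pi}\int_{-1}^1 \mathsf{J}_{s_1}^{(a,-1/2)}(x)\mathsf{J}_{s_2}^{(a,-1/2)}(x)(1-x)^a(1+x)^{-1/2}\,dx.
\]
I would show this equals $\delta_{s_1,s_2}$ by orthonormality of the normalized Jacobi polynomials, which follows from $\mathsf{J}_k = J_k/c_k$ combined with the squared-norm formula~\eqref{hk} (equivalently, from Lemma~\ref{DeltaDistribution} applied to $T = \mathsf{J}_{s_2}^{(a,-1/2)}$ and retaining only the $k = s_1$ term). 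The Kronecker delta from the involution identity then cancels this diagonal contribution exactly, leaving $-(\text{double contour integral})$, as claimed.

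The main obstacle is this orthonormality cancellation in the equal-level case, together with the bookkeeping check that the $\triangleleft$ versus $\trianglelefteq$ split in the statement really matches the trichotomy (strictly greater, equal, strictly less) of the pair $((n_1,a_1),(n_2,a_2))$. Neither requires any analytic input beyond the identities already recorded in Section~\ref{RoOG} and the standard involution identity from Appendix~\ref{Appendix A}, so the corollary is essentially a direct consequence of Theorem~\ref{theorem2}.
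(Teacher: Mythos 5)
Your proposal is correct and follows essentially the same route as the paper: invoke the particle-hole identity $K_\Delta = \delta - K^\omega$ from Appendix A, apply the orthogonality relation for normalized Jacobi polynomials to show the diagonal single-integral term equals $\delta_{s_1 s_2}$ and hence cancels the Kronecker factor, and observe that the boundary case $(n_1,a_1)=(n_2,a_2)$ thereby migrates from the $\trianglerighteq$ branch of Theorem~\ref{theorem2} to the $\trianglelefteq$ branch of the corollary. The paper's proof states this more tersely but makes exactly the same moves.
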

\begin{proof}
This result follows from the orthogonality relations
\[
\delta_{s_1s_2}=\frac{W^{(a,-1/2)}(s_1)}{\pi}\int_{-1}^1\mathsf{J}_{s_1}^{(a,-1/2)}(x)\mathsf{J}_{s_2}^{(a,-1/2)}(x)(1-x)^a(1+x)^{-1/2}dx
\]
for $a=\pm 1/2$. Note that in Theorem \ref{theorem2}, the two cases are $(n_2,a_2)\trianglelefteq (n_1,a_1)$ and $(n_1,a_1)\triangleleft (n_2,a_2)$. Here, the two cases are $(n_2,a_2)\triangleleft (n_1,a_1)$ and $(n_1,a_1)\trianglelefteq (n_2,a_2)$.
\end{proof}

This proof uses Theorem 4.2 from \cite{kn:BF2}, which we will describe in the next subsection. We will alter the notation to make it more convenient later.

\subsection{Determinantal structure of the correlation functions}

Theorem \ref{theorem1} and Lemma \ref{Interlacing} imply that the measure of a finite path $\boldsymbol{\lambda}=(\lambda^{(1),-1/2}\prec\lambda^{(1),1/2}\prec\lambda^{(2),-1/2}\prec\ldots\prec\lambda^{(N),a})$ is
\begin{multline}\label{Boint}
P^{\omega}(\boldsymbol{\lambda})\\
=\mathrm{const}\cdot\displaystyle\prod_{n=1}^N\Bigg[\det\left[\phi(\lambda^{(n),-1/2}_l-l+n,\lambda^{(n-1),1/2}_k-k+n-1)\right]_{1\leq k,l\leq n}\\
\times\det\left[\T(\lambda^{(n),1/2}_l-l+n,\lambda^{(n),-1/2}_k-k+n)_{1\leq
k,l\leq n}\right]\Bigg]\\
\times\det\left[f^{N,a}_{l}(\lambda^{(N),a}_k-k+N)\right]_{1\leq k,l\leq N}
\end{multline}
where $\phi$ and $\T$ were defined in Section \ref{MCoML}, and $f_j^{N,a}$ was defined in the statement of Theorem \ref{theorem1}. If $a=-1/2$, then the final determinant with the $\T$ does not occur.

Recall that we have set $\lambda_n^{(n-1),1/2}$ to be equal to zero, so $\lambda_n^{(n-1),1/2}-n+n-1=-1$. We will refer to $-1$ as a ``virtual variable,'' or ``virt.''

Set
\begin{equation}\label{DefinitionofPsi}
\Psi_{N-l}^{N,a}(s)=\frac{W^{(a,-1/2)}(s)}{\pi}\int_{-1}^1 E^{\omega}(x)\mathsf{J}_s^{(a,-1/2)}(x)(x-1)^{N-l}(1-x)^a(1+x)^{-1/2}dx.
\end{equation}
Observe that Span$\{f_1,\ldots,f_N\}=$ Span$\{\Psi_1,\ldots,\Psi_N\}$. Thus, if we replace $f_l$ in \eqref{Boint} with $\Psi_{N-l}$, the measure is not going to change.

Let $*$ denote convolution. More explicitly,
\[
(f*g)(x,y) = \displaystyle\sum_{z\geq 0}f(x,z)g(z,y),\qquad (f*h)(x) = \displaystyle\sum_{z\geq 0}f(z)h(z,x).
\]

For $(n_1,a_1)\triangleleft(n_2,a_2)$, set
\[
\phi^{(n_1,a_1),(n_2,a_2)}=
\begin{cases}
(\T*\phi)^{n_2-n_1}*\T,\ &\text{if}\ a_1=-1/2,\ a_2=1/2,\\
(\T*\phi)^{n_2-n_1},\ &\text{if}\ a_1=1/2,\ a_2=1/2,\\
(\phi*\T)^{n_2-n_1-1}*\phi,\ &\text{if}\ a_1=1/2,\ a_2=-1/2,\\
(\phi*\T)^{n_2-n_1},\ &\text{if}\ a_1=-1/2,\ a_2=-1/2.
\end{cases}
\]
For $(n_1,a_1)\trianglerighteq(n_2,a_2)$, set $\phi^{(n_1,a_1),(n_2,a_2)}=0$.

Let $M$ be the $N\times N$ matrix with entries
\[
M_{kl}=
\begin{cases}
\Psi_{N-l}^{N,a}*(\T*\phi)^{N-k+1}(virt),\ \ &\text{if}\ a=1/2,\\
\Psi_{N-l}^{N,a}*\phi*(\T*\phi)^{N-k}(virt),\ \ &\text{if}\ a=-1/2.
\end{cases}
\]

For $k\leq N$, define $\Psi_{n-k}^{n,a_1}=\Psi_{N-k}^{N,a}*\phi^{(n,a_1),(N,a)}$.

Theorem 4.2 from \cite{kn:BF2} also says that if $M$ is upper triangular and invertible, then there exist functions $\Phi_{n-k}^{n,a_2}(s)$ such that
\begin{itemize}
\item
$\{\Phi^{n,a_2}_{n-k}(s)\}_{k=1,\ldots,n}$ is a basis of the linear span of
\[
\begin{cases}
\{(\T*(\phi*\T)^{n-k}*\phi)(s,virt)\}_{k=1,\ldots,n},\ \ &\text{if}\ a_2=1/2\\
\{((\phi*\T)^{n-k}*\phi)(s,virt)\}_{k=1,\ldots,n},\ \ &\text{if}\
a_2=-1/2
\end{cases}
\]

\item For $0\leq i,j\leq n-1$,
\[
\displaystyle\sum_{s\geq 0}\Phi_i^{n,a}(s)\Psi_j^{n,a}(s)=\delta_{ij}.
\]
\end{itemize}
The formula for the correlation kernel is given by
\begin{equation}\label{FormulaForTheKernel}
K(n_1,a_1,s_1;n_2,a_2;s_2)=-\phi^{(n_1,a_1),(n_2,a_2)}(s_2,s_1)+\displaystyle\sum_{k=1}^{n_2}\Psi_{n_1-k}^{n_1,a_1}(s_1)\Phi_{n_2-k}^{n_2,a_2}(s_2).
\end{equation}

In section \ref{MatrixM}, we prove that $M$ is upper triangular. In section \ref{CalculatingPsi}, we calculate $\Psi_{n_1-k}^{n_1,a_1}$. In section \ref{Need2Sum}, we calculate $\displaystyle\sum_{k=1}^{n_2}\Psi_{n_1-k}^{n_1,a_1}(s_1)\Phi_{n_2-k}^{n_2,a_2}(s_2)$. In section \ref{Conv}, we calculate $\phi^{(n_1,a_1),(n_2,a_2)}(s_2,s_1)$. Finally in section \ref{AddingTogether}, we add all these expressions together to get the expression in Theorem \ref{theorem2}.

\subsection{The Matrix M}\label{MatrixM}

\begin{lemma}
The matrix $M$ is upper triangular and invertible.
\end{lemma}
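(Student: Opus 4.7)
The plan is to evaluate $M_{kl}$ explicitly by processing the convolution chain from the left and applying the summation identities from Section~2.4 at each step. The upper triangular structure will then emerge as the statement that, after enough steps, the remaining power of $(x-1)$ in the integrand forces the boundary value to vanish.

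First consider $a=1/2$, where
\[
M_{kl}=\sum_{s\ge 0}\Psi^{N,1/2}_{N-l}(s)\,(\T*\phi)^{m}(s,virt),\qquad m=N-k+1.
\]
Introduce partial convolutions $f_0=\Psi^{N,1/2}_{N-l}$, $f_{2j+1}(z)=\sum_w f_{2j}(w)\,\T(w,z)$, $f_{2j+2}(z)=\sum_w f_{2j+1}(w)\,\phi(w,z)$, and prove by induction on $j$ that
\[
f_{2j}=\Psi^{N,1/2}_{N-l-j}\qquad\text{for }0\le j\le N-l.
\]
For the $\T$-step apply Lemma~\ref{NeededSum2} with $T(x)=E^{\omega}(x)(x-1)^{N-l-j}$; this rewrites $\sum_{w\ge z}\Psi^{N,1/2}_{N-l-j}(w)$ as an integral against $\mathsf{J}^{(-1/2,-1/2)}_z$ with weight $(1-x)^{-1/2}(1+x)^{-1/2}$. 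The subsequent $\phi$-step applies Lemma~\ref{NeededSum}(b) with the same $T$; its boundary value $T(1)=E^{\omega}(1)\cdot 0^{N-l-j}$ vanishes precisely when $N-l-j\ge 1$, which is exactly the range of the induction. The algebraic identity $(x-1)^{N-l-j}(1-x)^{-1/2}=-(x-1)^{N-l-j-1}(1-x)^{1/2}$ is then used to recognize the resulting integral as $\Psi^{N,1/2}_{N-l-j-1}$, closing the step.

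With the induction in hand, $M_{kl}=\sum_{z\ge 0}f_{2m-1}(z)$, because the final $\phi(z,virt)$ equals $1$ for $z\ge 0$. For $k\ge l$, running the induction up through $j=m-1$ and following with one more $\T$-step plus an application of Lemma~\ref{NeededSum}(a) reduces $M_{kl}$ to $T(1)$ with $T(x)=E^{\omega}(x)(x-1)^{k-l}$; this equals $0$ when $k>l$ and $E^{\omega}(1)$ when $k=l$. The case $a=-1/2$ is parallel: the chain $\phi*(\T*\phi)^{N-k}$ starts with a $\phi$, so the first reduction applies Lemma~\ref{NeededSum}(b) directly to $\Psi^{N,-1/2}_{N-l}$ to land in $\Psi^{N,1/2}_{N-l-1}$, and thereafter the alternation $\T\colon\Psi^{N,1/2}_p\to\Psi^{N,-1/2}_p$ (Lemma~\ref{NeededSum2}) and $\phi\colon\Psi^{N,-1/2}_p\to\Psi^{N,1/2}_{p-1}$ (Lemma~\ref{NeededSum}(b)) yields the same conclusions, in particular $M_{kk}=E^{\omega}(1)$. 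Since $E^{\omega}(1)=1$ by the product formula for $E^{\omega}$, the matrix $M$ is upper triangular with nonzero diagonal, and hence invertible with $\det M=1$.

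The main obstacle is purely bookkeeping: one must carefully track, after each $\T$ or $\phi$ convolution, which flavor of normalized Jacobi polynomial ($\mathsf{J}^{(1/2,-1/2)}$ versus $\mathsf{J}^{(-1/2,-1/2)}$) appears, which of the weights $(1-x)^{\pm 1/2}(1+x)^{-1/2}$ accompanies it, and which power of $(x-1)$ remains. The identity $(x-1)^{N-l-j}(1-x)^{-1/2}=-(x-1)^{N-l-j-1}(1-x)^{1/2}$ is precisely what permits these weights to be resynchronized so that the same inductive step can be applied again, and recognizing that the boundary value $T(1)$ vanishes at the last step exactly in the regime $k>l$ is what produces the upper triangular pattern.
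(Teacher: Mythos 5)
Your proof is correct, and it takes a genuinely different route from the paper's. The paper argues structurally: it observes that $g_k^{(a)}(s)$, defined as the relevant convolution chain evaluated at $virt$, is a polynomial in $s$ of degree $2N-2k+\tfrac12+a$, introduces the functions $\Phi_{N-k}^{N,a}$ (which are also polynomials of that exact degree), deduces that $g_k^{(a)}$ is a triangular combination of the $\Phi_{N-m}^{N,a}$, and then converts this to upper-triangularity of $M$ via the biorthogonality $\langle\Phi_{N-m}^{N,a},\Psi_{N-l}^{N,a}\rangle=\delta_{ml}$ supplied by Lemma~\ref{DeltaDistribution}. You instead unwind the convolution chain directly, using Lemma~\ref{NeededSum2} for each $\T$-step and Lemma~\ref{NeededSum}(b) for each $\phi$-step together with the sign flip $(x-1)^p(1-x)^{-1/2}=-(x-1)^{p-1}(1-x)^{1/2}$, and close with Lemma~\ref{NeededSum}(a). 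In fact, you are essentially importing (a special case of) the inductive computation that the paper carries out in Section~\ref{CalculatingPsi} to evaluate $\Psi^{n,a}_{k}$, applied here one more step to reach the boundary value at $x=1$. Your approach buys explicitness --- you get $M_{kl}=E^\omega(1)\cdot 0^{k-l}$ for $k\ge l$, hence $\det M = 1$ --- whereas the paper's degree-counting is shorter and, more importantly, introduces $\Phi_{N-k}^{N,a}$ en passant, which is needed anyway in Section~\ref{Need2Sum} for the kernel formula. Both are sound; your route is cleaner in isolation but misses that structural side benefit.

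One point to be careful with, which you do in fact handle correctly but state a bit loosely: the induction hypothesis $f_{2j}=\Psi^{N,1/2}_{N-l-j}$ holds for $0\le j\le N-l$, and the step from $j$ to $j+1$ requires $N-l-j\ge 1$ so that $T(1)=0$ in Lemma~\ref{NeededSum}(b). For $k\ge l$ one has $m-1=N-k\le N-l$, so you are still within range when you stop at $j=m-1$ and get $\Psi^{N,1/2}_{k-l}$. For $k<l$ the chain would overshoot into the regime of the second formula for $\Psi$ (the one with subtracted Taylor terms), and your induction would no longer apply --- but of course that regime is irrelevant for upper-triangularity, so there is no gap.
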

\begin{proof}
By definition,
\[
M_{kl}=
\begin{cases}
\displaystyle\sum_{s\geq 0}\Psi_{N-l}^{N,a}(s)(\T*\phi)^{N-k+1}(s,virt),\ &a=1/2,\\
\displaystyle\sum_{s\geq
0}\Psi_{N-l}^{N,a}(s)*\phi*(\T*\phi)^{N-k}(s,virt),\ &a=-1/2.
\end{cases}
\]
Define
\[
g_k^{(a)}(s)=
\begin{cases}
(\T*\phi)^{N-k+1}(s,virt), \ &a=1/2,\\
\phi*(\T*\phi)^{N-k}(s,virt),\ &a=-1/2,\\
\end{cases}
\]
so that $M_{kl}=\langle \Psi_{N-l}^{N,a},g_k^{(a)} \rangle$. The definitions of $\mathcal{T}$ and $\phi$ imply that $g_k^{(a)}(s)$ is a polynomial of degree $2N-2k+1/2+a$.

Define
\begin{equation}\label{JackAndTheBeanstalk}
\Phi_{N-k}^{N,a}(s)=\frac{1}{2\pi i}\oint\frac{1}{E^{\omega}(u)}\mathsf{J}_s^{(a,-1/2)}(u)\frac{du}{(u-1)^{N-k+1}},
\end{equation}
where the integration contour is a positively oriented simple loop around $u=1$ that does not contain any zeroes of $E^{\omega}(u)$. (If  $\beta_1<1$, then $E^{\omega}(u)$ has no zeroes in $[-1,1]$, so the contour always exists). The integrand has a pole only at $u=1$, hence
\[
\Phi_{N-k}^{N,a}(s)=\frac{1}{(N-k)!}\frac{d^{N-k}}{du^{N-k}}\frac{1}{E^{\omega}(u)}\mathsf{J}_s^{(a,-1/2)}(u)\big|_{u=1}.
\]
Clearly, $\Phi_{N-k}^{N,a}(s)$ is a polynomial of degree $2N-2k+1/2+a$.

Since  $g_k^{(a)}$ and $\Phi_{N-k}^{N,a}$ are polynomials of degree $2N-2k+1/2+a$, there exists an invertible upper triangular matrix $A$ such that
\[
g_k^{(a)}=\displaystyle\sum_{m=1}^NA_{km}\Phi_{N-m}^{N,a}.
\]
Therefore
\[
M_{kl} = \langle g_k^{(a)},\Psi_{N-l}^{N,a}\rangle = \langle\displaystyle\sum_{m=1}^NA_{km}\Phi_{N-m}^{N,a},\Psi_{N-l}^{N,a}\rangle = \displaystyle\sum_{m=1}^NA_{km} \langle\Phi_{N-m}^{N,a},\Psi_{N-l}^{N,a}\rangle.
\]
Finally, by Lemma \ref{DeltaDistribution},
\[
\langle\Phi_{N-m}^{N,a},\Psi_{N-l}^{N,a}\rangle=\frac{1}{2\pi i}\oint\frac{du}{(u-1)^{l-m+1}}=\delta_{ml},
\]
so $M=A$, which is upper triangular and invertible.
\end{proof}

\subsection{\texorpdfstring{Calculating $\Psi^{n,a}_k$}{Calculating Psi}}\label{CalculatingPsi}
For ease of notation, let $E$ denote $E^{\omega}$ in the remaining sections.

The purpose of this section is to prove the following:
\begin{theorem}
For $l\leq n_1$,
\begin{multline}\label{Pork}
\Psi_{n_1-l}^{n_1,a_1}(s)=\frac{W^{(a_1,-1/2)}(s)}{\pi}\int_{-1}^1 E(x)\mathsf{J}_s^{(a_1,-1/2)}(x)(x-1)^{n_1-l}(1-x)^{a_1}(1+x)^{-1/2}dx.
\end{multline}

For $l>n_1$,
\begin{multline}\label{Beans}
\Psi_{n_1-l}^{n_1,a_1}(s)=\frac{W^{(a_1,-1/2)}(s)}{\pi}\\
\times\int_{-1}^1\frac{E(x)-E(1)-E'(1)(x-1)-\ldots-\frac{E^{(l-n_1-1)}(1)}{(l-n_1-1)!}(x-1)^{l-n_1-1}}{(x-1)^{l-n_1}}\\
\times \mathsf{J}_s^{(a_1,-1/2)}(x)(1-x)^{a_1}(1+x)^{-1/2}dx.
\end{multline}
\end{theorem}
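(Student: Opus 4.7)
My plan is to prove both formulas by induction on the number of sub-level steps separating $(N,a)$ from $(n_1,a_1)$, peeling off the factors of $\mathcal{T}$ and $\phi$ that make up $\phi^{(n_1,a_1),(N,a)}$ one at a time. The base case $(n_1,a_1)=(N,a)$ is the defining identity \eqref{DefinitionofPsi}, which coincides with \eqref{Pork} at $l\le n_1=N$. The inductive step amounts to analyzing how a single $\mathcal{T}$ (sub-level step from $(n,1/2)$ down to $(n,-1/2)$) or a single $\phi$ (step from $(n,-1/2)$ down to $(n-1,1/2)$) transforms the integral representation.

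For the computational heart of the induction, I would work with the ansatz
$$\Psi(s)=\frac{W^{(a,-1/2)}(s)}{\pi}\int_{-1}^1 F(x)\,\mathsf{J}_s^{(a,-1/2)}(x)(1-x)^a(1+x)^{-1/2}\,dx$$
and show two transfer rules. First, using $\mathcal{T}(z,s)=W^{(-1/2,-1/2)}(s)\mathbf{1}_{z\ge s}$ together with Lemma \ref{NeededSum2}, the convolution $\Psi*\mathcal{T}$ preserves $F$ but flips the pair $(\mathsf{J}^{(1/2,-1/2)},(1-x)^{1/2})$ into $(\mathsf{J}^{(-1/2,-1/2)},(1-x)^{-1/2})$, i.e.\ the level moves from $(n,1/2)$ to $(n,-1/2)$ with the integrand $F(x)$ unchanged. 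Second, Lemma \ref{NeededSum}(b) gives that at level $(n,-1/2)$ the convolution $(\Psi*\phi)(s)=\sum_{z>s}\Psi(z)$ equals $\frac{1}{\pi}\int(F(1)-F(x))\mathsf{J}_s^{(1/2,-1/2)}(x)(1-x)^{-1/2}(1+x)^{-1/2}\,dx$; the $(x-1)$-factor inherent in $F(1)-F(x)$ converts the weight back to $(1-x)^{1/2}$, fitting level $(n-1,1/2)$.

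Next I would verify the two regimes. In the polynomial regime $l\le n$ the current $F(x)=E(x)(x-1)^{n-l}$ vanishes at $x=1$, so $F(1)-F(x)=(1-x)E(x)(x-1)^{n-l-1}$, and the exponent of $(x-1)$ drops by one under each $\phi$, matching \eqref{Pork} at level $n-1$. Once the exponent hits zero and a further $\phi$ is applied (the transition $l=n$), $F(1)-F(x)=E(1)-E(x)$ and, after dividing out $(1-x)$, we enter the Taylor regime of \eqref{Beans} with the empty Taylor polynomial $T_{-1}\equiv 0$. Inside the Taylor regime, if $F(x)=(E(x)-T_{l-n-1}(x))/(x-1)^{l-n}$, then a short expansion gives $F(1)=E^{(l-n)}(1)/(l-n)!$ and
$$F(1)-F(x)=-\frac{E(x)-T_{l-n}(x)}{(x-1)^{l-n}};$$
absorbing the sign and the $(1-x)$ factor as above, the new integrand at level $(n-1,1/2)$ takes the form $(E(x)-T_{l-(n-1)-1}(x))/(x-1)^{l-(n-1)}$, which is precisely \eqref{Beans} with $n$ replaced by $n-1$.

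The main obstacle is purely bookkeeping: keeping the signs, the $(x-1)$ versus $(1-x)$ conventions, and the alternating weight/Jacobi types straight as $\mathcal{T}$ and $\phi$ are applied, and checking that the two regimes match at the boundary $l=n$. Once these are handled, the induction is driven uniformly by Lemmas \ref{NeededSum} and \ref{NeededSum2}, and no further ingredients are needed.
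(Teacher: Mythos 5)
Your proposal is correct and takes essentially the same approach as the paper: induction on the number of sub-level steps from $(N,a)$ down to $(n_1,a_1)$, using Lemma~\ref{NeededSum2} for each $\T$-step and Lemma~\ref{NeededSum}(b) for each $\phi$-step, with the base case being \eqref{DefinitionofPsi}. The paper splits the argument into two separate inductions (first \eqref{Pork}, then \eqref{Beans} with its base case obtained from the $n_1=l$, $a_1=-1/2$ instance of \eqref{Pork}), whereas you run a single induction and track the transition between the two regimes explicitly via the Taylor-polynomial identity $F(1)-F(x)=-(E(x)-T_{l-n}(x))/(x-1)^{l-n}$; this is a presentational difference, not a mathematical one.
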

\begin{proof}
Start with the proof of \eqref{Pork}. It will be done by induction on $d(n_1,a_1;N,a)=2(N-n_1)+a-a_1$. When $d(n_1,a_1;N,a)=0$, then \eqref{Pork} is true by \eqref{DefinitionofPsi}.
Now assume that \eqref{Pork} holds whenever $d(n_1,a_1;N,a)=m$. Either $a_1=1/2$ or $a_1=-1/2$. If $a_1=1/2$, then convoluting both sides of \eqref{Pork} by $\T$ and applying Lemma \ref{NeededSum2} with $T(x)=E(x)(x-1)^{n_1-l}$ shows that \eqref{Pork} holds for $n_1$ and $a_1=-1/2$. If $a_1=-1/2$, then convoluting both sides of \eqref{Pork} by $\phi$ and applying Lemma \ref{NeededSum}(b) with $T(x)=E(x)(x-1)^{n_1-l}$ shows that \eqref{Pork} holds for $n_1-1$ and $a_1=1/2$. Either way, \eqref{Pork} must hold whenever $d(n_1,a_1;N,a)=m+1$.

Now on to the proof of \eqref{Beans}. It also will be done by induction on $d(n_1,a_1;N,a)$. The base case occurs when $l-n_1=1$, and $a_1=1/2$. We just proved that \eqref{Pork} holds when $n_1=l$ and $a_1=-1/2$. Convolute both sides of \eqref{Pork} by $\phi$ and apply Lemma \ref{NeededSum}(b) with $T(x)=E(x)$. This proves the base case.

Now assume that \eqref{Beans} holds for some $n_1$ and $a_1=-1/2$. Convolute both sides of \eqref{Beans} by $\phi$. Apply Lemma \ref{NeededSum}(b) by setting
\[
T(x)=
\frac{E(x)-E(1)-E'(1)(x-1)-\ldots-\frac{E^{(l-n_1-1)}(1)}{(l-n_1-1)!}(x-1)^{l-n_1-1}}{(x-1)^{l-n_1}}.
\]
This shows that \eqref{Beans} holds for $n_1-1$ and $a_1=1/2$. Now assume that \eqref{Beans} holds for some $n_1$ and $a_1=1/2$. Convolute both sides of \eqref{Beans} by $\T$. By Lemma \ref{NeededSum2} with
\[
T(x)=\frac{E(x)-E(1)-E'(1)(x-1)-\ldots-\frac{E^{(l-n_1-1)}(1)}{(l-n_1-1)!}(x-1)^{l-n_1-1}}{(x-1)^{l-n_1}},
\]
\eqref{Beans} also holds for $n_1$ and $a_1=-1/2$.

\end{proof}

\subsection{\texorpdfstring{Calculating $\Phi_k^{n,a}$}{Calculating Phi}}\label{Need2Sum}
Define for $1\leq k\leq n\leq N$,
\[
\Phi^{n,a}_{n-k}(s)=\frac{1}{2\pi i}\oint\frac{1}{E(u)}\mathsf{J}_s^{(a,-1/2)}(u)\frac{du}{(u-1)^{n-k+1}}
\]
where the contour contains the interval $[-1,1]$ and does not contain any zeroes of $E(u)$. Note that this agrees with \eqref{JackAndTheBeanstalk}.
\begin{lemma}
(a) $\{\Phi^{n,a}_{n-k}(s)\}_{k=1,\ldots,n}$ is a basis of the linear span of
\[
\begin{cases}
\{(\T*(\phi*\T)^{n-k}*\phi)(s,virt)\}_{k=1,\ldots,n},\ \ &\text{if}\ a=1/2\\
\{((\phi*\T)^{n-k})*\phi(s,virt)\}_{k=1,\ldots,n},\ \ &\text{if}\
a=-1/2
\end{cases}
\]

(b) For $0\leq i,j\leq n-1$,
\[
\displaystyle\sum_{s\geq 0}\Phi_i^{n,a}(s)\Psi_j^{n,a}(s)=\delta_{ij}.
\]
\end{lemma}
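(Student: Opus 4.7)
My proof will pass through the \emph{unweighted} family
\[
\tilde\Phi_m^{n,a}(s):=\frac{1}{m!}\bigl[\mathsf{J}_s^{(a,-1/2)}\bigr]^{(m)}(1)=\frac{1}{2\pi i}\oint\frac{\mathsf{J}_s^{(a,-1/2)}(u)\,du}{(u-1)^{m+1}},
\]
which is what $\Phi_{n-k}^{n,a}$ becomes when $E\equiv 1$ (with $m=n-k$). Both the convolution chains in (a) and the $\Phi$'s themselves will be identified with this family, after which the bi-orthogonality (b) will follow from a contour manipulation together with the reproducing kernel property.

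\textbf{Part (a).} I will show by induction on $m\geq 0$ that
\[
((\phi*\T)^m*\phi)(s,virt)=\tilde\Phi_m^{n,-1/2}(s),\qquad (\T*(\phi*\T)^m*\phi)(s,virt)=\tilde\Phi_m^{n,1/2}(s).
\]
The base case $\phi(s,virt)=1=\mathsf{J}_s^{(-1/2,-1/2)}(1)=\tilde\Phi_0^{n,-1/2}(s)$ is immediate. For the inductive step, I would read Lemma~\ref{UsefulIdentities}(a) as the identity $(\T*\mathsf{J}^{(-1/2,-1/2)}(\cdot,x))(s)=\mathsf{J}_s^{(1/2,-1/2)}(x)$ of functions of $s$ (with $x\in[-1,1]$ a parameter); since the $\T$-convolution in $y$ is a finite sum, I can extract $m$-th Taylor coefficients in $(x-1)$ at $x=1$ on both sides, obtaining $(\T*\tilde\Phi_m^{n,-1/2})(s)=\tilde\Phi_m^{n,1/2}(s)$. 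Similarly, Lemma~\ref{UsefulIdentities}(b) reads $(\phi*\mathsf{J}^{(1/2,-1/2)}(\cdot,x))(s)=(\mathsf{J}_s^{(-1/2,-1/2)}(x)-1)/(x-1)$; the right-hand side has Taylor expansion $\sum_{j\geq 0}\tilde\Phi_{j+1}^{n,-1/2}(s)(x-1)^j$ at $x=1$ (using $\mathsf{J}_s^{(-1/2,-1/2)}(1)=1$), so matching $m$-th coefficients gives $(\phi*\tilde\Phi_m^{n,1/2})(s)=\tilde\Phi_{m+1}^{n,-1/2}(s)$, closing the induction. To connect $\tilde\Phi$ with $\Phi$, Leibniz applied to $\Phi_{n-k}^{n,a}(s)=\tfrac{1}{(n-k)!}\tfrac{d^{n-k}}{du^{n-k}}\bigl[\mathsf{J}_s^{(a,-1/2)}(u)/E(u)\bigr]_{u=1}$ gives $\Phi_{n-k}^{n,a}=\sum_{q=0}^{n-k}c_{n-k-q}\,\tilde\Phi_q^{n,a}$ with $c_p=(1/E)^{(p)}(1)/p!$ and $c_0=1/E(1)\neq 0$; this is an invertible upper-triangular change of basis, so $\{\Phi_{n-k}^{n,a}\}_{k=1}^n$ spans the same $n$-dimensional space as $\{\tilde\Phi_{n-k}^{n,a}\}_{k=1}^n$, proving (a).

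\textbf{Part (b).} Substituting the contour formula for $\Phi_{n-k}^{n,a}$ in the bi-orthogonality sum and exchanging summation with contour integration (absolute convergence holds because $\Psi_{n-l}^{n,a}(s)$ is proportional to the $s$-th Jacobi coefficient of the real-analytic function $E(x)(x-1)^{n-l}$ and hence decays super-polynomially in $s$), I would obtain
\[
\sum_{s\geq 0}\Phi_{n-k}^{n,a}(s)\Psi_{n-l}^{n,a}(s)=\frac{1}{2\pi i}\oint_C\frac{F(u)\,du}{E(u)(u-1)^{n-k+1}},\quad F(u):=\sum_{s\geq 0}\mathsf{J}_s^{(a,-1/2)}(u)\Psi_{n-l}^{n,a}(s).
\]
For $u\in[-1,1]$, Lemma~\ref{DeltaDistribution} applied with test function $T(x)=E(x)(x-1)^{n-l}$ identifies $F(u)=E(u)(u-1)^{n-l}$; by analytic continuation this equality persists for $u\in C$. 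After cancelling $E(u)$, the integral reduces to $\tfrac{1}{2\pi i}\oint_C(u-1)^{l-k-1}du$, which by the residue theorem equals $\delta_{k,l}=\delta_{i,j}$ (recall $i=n-k$, $j=n-l$).

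\textbf{Main obstacle.} The most delicate step is the analytic continuation of $F(u)$ in (b), i.e.\ propagating $F(u)=E(u)(u-1)^{n-l}$ from $u\in[-1,1]$ to the contour $C$. Since $\beta_1<1$, $E^\omega$ has no zeros in a complex neighbourhood of $[-1,1]$, and one can choose $C$ inside such a neighbourhood; standard Szeg\H{o}-type estimates for Jacobi expansions of analytic functions then yield uniform convergence of the series defining $F(u)$ on $C$, so $F$ is analytic there and must agree with the analytic function $E(u)(u-1)^{n-l}$.
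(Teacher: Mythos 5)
Your proof is correct, and the two parts differ in how closely they track the paper.

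For part (a), the paper argues purely by degree count: it notes that $\Phi_{n-k}^{n,a}(s)$ and the $k$th convolution chain are both polynomials in $s$ of degree $2(n-k)+1/2+a$ and concludes that the two $n$-element families span the same space. Your route is genuinely different and more constructive: you identify the convolution chains \emph{exactly} with the unweighted family $\tilde\Phi_m^{n,a}$ by an induction that extracts Taylor coefficients in $(x-1)$ from Lemma~\ref{UsefulIdentities}(a),(b), and then pass from $\tilde\Phi$ to $\Phi$ by the upper-triangular Leibniz change of basis generated by $1/E$. This buys two things the paper's argument leaves implicit. First, a degree match alone does not force span equality for two families with the jumping degree sequence $1/2+a,\ 2+1/2+a,\ \dots$ unless one also invokes the extra structure that these are polynomials in $s^2$ (for $a=-1/2$), resp.\ odd polynomials in $s+\tfrac12$ (for $a=1/2$); your exact identification makes the span equality manifest without needing that observation. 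Second, you obtain as a byproduct the closed evaluations $((\phi*\T)^m*\phi)(s,virt)=\tilde\Phi_m^{n,-1/2}(s)$ and $(\T*(\phi*\T)^m*\phi)(s,virt)=\tilde\Phi_m^{n,1/2}(s)$, which the paper never states.

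For part (b), your route is essentially the paper's: substitute the contour representation of $\Phi$, interchange the sum over $s$ with the contour integral, invoke Lemma~\ref{DeltaDistribution} with $T(x)=E(x)(x-1)^{j}$, cancel $E(u)$, and read off the residue. The paper does this in one line. You add the analytic justification it omits --- namely that the equality $\sum_{s}\bigl(\tfrac{W(s)}{\pi}\mathsf J_s^{(a,-1/2)}(u)\int\!E\mathsf J_s(x-1)^j w\,dx\bigr)=E(u)(u-1)^j$, which Lemma~\ref{DeltaDistribution} gives only for $u\in[-1,1]$, continues analytically to $u\in C$ because the Jacobi coefficients of the real-analytic $E(x)(x-1)^j$ decay geometrically while $\mathsf{J}_s^{(a,-1/2)}(u)$ grows geometrically at a strictly smaller rate once $C$ is taken close enough to $[-1,1]$. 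This is a genuine (if mild) gap in the paper's one-line argument, and your handling of it is correct.
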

\begin{proof}
(a) $\Phi^{n,a}_{n-k}(s)$ only has a pole at $u=1$, so it equals
\[
\frac{1}{(n-k)!}\frac{d^{n-k}}{dx^{n-k}}\frac{\mathsf{J}_s^{(a,-1/2)}(u)}{E(u)}\big|_{u=1},
\]
which is a polynomial in $s$ of degree $2n-2k+1/2+a$. Also,
$(\T*(\phi*\T)^{n-k}*\phi)(s,virt)$ is a polynomial in $s$ of degree $2n-2k+1$ and $((\phi*\T)^{n-k}*\phi)(s,virt)$ is a polynomial in $s$ of degree $2n-2k$. This proves (a).

(b) By Lemma \ref{DeltaDistribution}, (the contour below contains $[-1,1]$)
\[
\displaystyle\sum_{s\geq 0}\Phi_i^{n,a}(s)\Psi_j^{n,a}(s)=\frac{1}{2\pi i}\oint\frac{du}{(u-1)^{i-j+1}}=\delta_{ij}.
\]
\end{proof}

\begin{proposition}\label{AnnoyingSum}
In the expressions below, the $u$-contour is a positively oriented simple loop that encircles the interval $[-1,1]$ but does not encircle any zeroes of $E$.

For any $n_1\geq n_2\geq 1$ and $s_1,s_2\in\Z_{\geq 0}$, we have
\begin{multline}\label{SumToCalculate}
\displaystyle\sum_{k=1}^{n_2}\Psi_{n_1-k}^{n_1,a_1}(s_1)\Phi_{n_2-k}^{n_2,a_2}(s_2)\\
=\frac{W^{(a_1,-1/2)}(s_1)}{\pi}\int_{-1}^1\mathsf{J}_{s_1}^{(a_1,-1/2)}(x)\mathsf{J}_{s_2}^{(a_2,-1/2)}(x)(x-1)^{n_1-n_2}(1-x)^{a_1}(1+x)^{-1/2}dx\\
+\frac{W^{(a_1,-1/2)}(s_1)}{\pi}\frac{1}{2\pi i}\int_{-1}^1\oint\frac{E(x)}{E(u)}\mathsf{J}_{s_1}^{(a_1,-1/2)}(x)\mathsf{J}_{s_2}^{(a_2,-1/2)}(u)\\
\times\frac{(x-1)^{n_1}}{(u-1)^{n_2}}\frac{(1-x)^{a_1}(1+x)^{-1/2}dudx}{x-u}
\end{multline}
If $1\leq n_1<n_2$, then
\begin{multline*}
\displaystyle\sum_{k=1}^{n_2}\Psi_{n_1-k}^{n_1,a_1}(s_1)\Phi_{n_2-k}^{n_2,a_2}(s_2)\\
=\frac{W^{(a_1,-1/2)}(s_1)}{\pi}\int_{-1}^1\frac{\mathsf{J}_{s_1}^{(a_1,-1/2)}\mathsf{J}_{s_2}^{(a_2,-1/2)}}{E(x)}\left(E(x)-E(1)-\ldots-\frac{E^{(n_2-n_1-1)}(1)}{(n_2-n_1-1)!}(x-1)^{n_2-n_1-1}\right)\\
\times(x-1)^{n_1-n_2}(1-x)^{a_1}(1+x)^{-1/2}dx\\
+\frac{W^{(a_1,-1/2)}(s_1)}{\pi}\frac{1}{2\pi i}\int_{-1}^1\oint\frac{-E(1)-\ldots-\frac{E^{(n_2-n_1-1)}(1)}{(n_2-n_1-1)!}(u-1)^{n_2-n_1-1}}{E(u)}\\
\times\mathsf{J}_{s_1}^{(a_1,-1/2)}(x)\mathsf{J}_{s_2}^{(a_2,-1/2)}(u)(u-1)^{n_1-n_2}\frac{(1-x)^{a_1}(1+x)^{-1/2}dudx}{x-u}\\
+\frac{W^{(a_1,-1/2)}(s_1)}{\pi}\frac{1}{2\pi i}\int_{-1}^1\oint\frac{E(x)}{E(u)}\mathsf{J}_{s_1}^{(a_1,-1/2)}(x)\mathsf{J}_{s_2}^{(a_2,-1/2)}(u)\\
\times\frac{(x-1)^{n_1}}{(u-1)^{n_2}}\frac{(1-x)^{a_1}(1+x)^{-1/2}dudx}{x-u}.
\end{multline*}
\end{proposition}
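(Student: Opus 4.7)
I plan to represent $\Phi_{n_2-k}^{n_2,a_2}(s_2)$ by the contour integral (\ref{JackAndTheBeanstalk}) with the large contour $C$ enclosing $[-1,1]$ (the value of $\Phi$ is independent of the particular contour, since $\mathsf{J}_{s_2}^{(a_2,-1/2)}/E$ is holomorphic inside $C$), substitute the integral formula (\ref{Pork}) or (\ref{Beans}) for $\Psi_{n_1-k}^{n_1,a_1}(s_1)$, interchange the summation with both integrations, and reduce the problem to summing a geometric series in the integrand. The workhorse identity, obtained from $\sum_{j=0}^{m-1}((u-1)/(x-1))^j$, is
\[
\sum_{k=1}^{m}\frac{(x-1)^{A-k}}{(u-1)^{B-k+1}}=\frac{(x-1)^{A}}{(u-1)^{B}(x-u)}-\frac{(x-1)^{A-m}}{(u-1)^{B-m}(x-u)}.
\]
Each fragment on the right has transparent pole structure in $u$, and unwanted pieces are eliminated by the residue theorem.

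\textbf{Case $n_1\ge n_2$.} All $k\in\{1,\ldots,n_2\}$ satisfy $k\le n_1$, so (\ref{Pork}) applies throughout. Applying the identity with $A=n_1$, $B=n_2$, $m=n_2$, the fragment $\frac{(x-1)^{n_1}}{(u-1)^{n_2}(x-u)}$ produces the second (double-integral) term of the proposition verbatim; the remaining $-\frac{(x-1)^{n_1-n_2}}{x-u}$ has no $(u-1)$-pole, so the $u$-integral $\frac{1}{2\pi i}\oint_C \frac{\mathsf{J}_{s_2}^{(a_2,-1/2)}(u)}{E(u)(x-u)}\,du$ picks up only the simple pole at $u=x$ and equals $-\mathsf{J}_{s_2}^{(a_2,-1/2)}(x)/E(x)$. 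The $E(x)$'s cancel and the first (single-integral) term of the proposition emerges.

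\textbf{Case $n_1<n_2$.} Split $\sum_{k=1}^{n_2}=\sum_{k=1}^{n_1}+\sum_{k=n_1+1}^{n_2}$. The first portion is handled as before but with $m=n_1$: it produces $\frac{(x-1)^{n_1}}{(u-1)^{n_2}(x-u)}$ (which becomes the third term of the proposition) plus a leftover $-\frac{1}{(u-1)^{n_2-n_1}(x-u)}$ that is not a pure $u=x$ residue. For the second portion use (\ref{Beans}), split the numerator $E(x)-T_{k-n_1-1}(x,1)$, where $T_m(x,1):=\sum_{i=0}^m\tfrac{E^{(i)}(1)}{i!}(x-1)^i$, and apply the geometric identity twice: first on the $E(x)$ summand (with $A=0$), then on the Taylor-polynomial summand after exchanging the order of the $k$- and Taylor-index-$i$ summations. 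The leading fragment $\frac{E(x)}{(u-1)^{n_2-n_1}(x-u)}$ exactly cancels the leftover from the first portion. Grouping the remaining fragments by denominator type and recognizing $\sum_i\tfrac{E^{(i)}(1)}{i!}(u-1)^i=T_{n_2-n_1-1}(u,1)$ yields the $u$-denominator combination $-\tfrac{T_{n_2-n_1-1}(u,1)}{(u-1)^{n_2-n_1}(x-u)}$, which is the second term of the proposition, and the $x$-denominator combination $-\tfrac{E(x)-T_{n_2-n_1-1}(x,1)}{(x-1)^{n_2-n_1}(x-u)}$; the $u=x$ residue trick applied to the latter delivers the first (regularized single-integral) term.

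\textbf{Main obstacle.} The real work is the bookkeeping in the $n_1<n_2$ case. Several families of fragments arise and most are individually non-integrable at $x=1$; only after regrouping by whether the denominator carries powers of $(u-1)$ or of $(x-1)$ do the Taylor-remainder combinations assemble into convergent integrals. The crucial consistency check is the exact cancellation of the two $\frac{E(x)}{(u-1)^{n_2-n_1}(x-u)}$ contributions between the two portions of the split sum; without it, a divergence at $x=1$ would remain uncompensated by any Taylor-remainder grouping.
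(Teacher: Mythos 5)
Your proposal is correct and follows essentially the same route as the paper's proof: represent $\Phi$ by the contour integral, substitute the integral formulas for $\Psi$, sum the resulting finite geometric series, split the answer by whether the denominator carries $(u-1)$ or $(x-1)$ powers, and use the residue at $u=x$ to convert the $(x-1)$-type piece into a single integral. The only cosmetic difference in the $n_1<n_2$ case is that the paper groups $E(x)-E(1)$ as a single unit before summing, while you separate $E(x)$ from the whole Taylor polynomial $T_{k-n_1-1}(x,1)$ and interchange the $k$- and $i$-sums; both lead to the same two regrouped fragments $-T_{M-1}(u,1)(u-1)^{-M}/(x-u)$ and $-(E(x)-T_{M-1}(x,1))(x-1)^{-M}/(x-u)$, and to the same cancellation of $E(x)/\bigl(E(u)(u-1)^{M}(x-u)\bigr)$ between the two portions of the split sum.
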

\begin{proof}
First assume that $n_1\geq n_2$. Then the left hand side of \eqref{SumToCalculate} equals
\begin{multline*}
\frac{W^{(a_1,-1/2)}(s_1)}{\pi}\frac{1}{2\pi i}\int_{-1}^1\oint\frac{E(x)}{E(u)}\mathsf{J}_{s_1}^{(a_1,-1/2)}(x)\mathsf{J}_{s_2}^{(a_2,-1/2)}(u)\\
\times\frac{(x-1)^{n_1}}{(u-1)^{n_2}}\left(1-\left(\frac{u-1}{x-1}\right)^{n_2}\right)\frac{(1-x)^{a_1}(1+x)^{-1/2}dx}{x-u}
\end{multline*}
The expression
\begin{multline*}
\frac{W^{(a_1,-1/2)}(s_1)}{\pi}\frac{1}{2\pi i}\int_{-1}^1\oint\frac{E(x)}{E(u)}\mathsf{J}_{s_1}^{(a_1,-1/2)}(x)\mathsf{J}_{s_2}^{(a_2,-1/2)}(u)\\
\times\frac{(x-1)^{n_1}}{(u-1)^{n_2}}\left(\frac{u-1}{x-1}\right)^{n_2}\frac{(1-x)^{a_1}(1+x)^{-1/2}dx}{x-u}
\end{multline*}
has residues only at $u=x$, so equals
\begin{equation}\label{ExtraWeirdTerm}
-\frac{W^{(a_1,-1/2)}(s_1)}{\pi}\int_{-1}^1\mathsf{J}_{s_1}^{(a_1,-1/2)}(x)\mathsf{J}_{s_2}^{(a_2,-1/2)}(x)(x-1)^{n_1-n_2}(1-x)^{a_1}(1+x)^{-1/2}dx.
\end{equation}
Hence \eqref{SumToCalculate} holds.

Now assume $n_1<n_2$. Then $\displaystyle\sum_{k=1}^{n_1}\Psi_{n_1-k}^{n_1,a_1}(s_1)\Phi_{n_2-k}^{n_2,a_2}(s_2)$ equals
\begin{multline}\label{TheFirstSum}
\frac{W^{(a_1,-1/2)}(s_1)}{\pi}\frac{1}{2\pi i}\int_{-1}^1\oint\frac{E(x)}{E(u)}\mathsf{J}_{s_1}^{(a_1,-1/2)}(x)\mathsf{J}_{s_2}^{(a_2,-1/2)}(u)\\
\times\frac{(x-1)^{n_1}}{(u-1)^{n_2}}\left(1-\left(\frac{u-1}{x-1}\right)^{n_1}\right)\frac{(1-x)^{a_1}(1+x)^{-1/2}dx}{x-u}.
\end{multline}
To evaluate $\displaystyle\sum_{k=n_1+1}^{n_2}\Psi_{n_1-k}^{n_1,a_1}(s_1)\Phi_{n_2-k}^{n_2,a_2}(s_2)$, we first evaluate
\[
\displaystyle\sum_{k=n_1+1}^{n_2}\frac{E(x)-E(1)-E'(1)(x-1)-\ldots-\frac{E^{(k-n_1-1)}(1)}{(k-n_1-1)!}(x-1)^{k-n_1-1}}{(x-1)^{k-n_1}(u-1)^{n_2-k+1}}.
\]
It can be rearranged as
\begin{multline*}
\displaystyle (E(x)-E(1))\left(\sum_{k=n_1+1}^{n_2}\frac{(u-1)^{k-n_2-1}}{(x-1)^{k-n_1}}\right)-E'(1)(x-1)\left(\sum_{k=n_1+2}^{n_2}\frac{(u-1)^{k-n_2-1}}{(x-1)^{k-n_1}}\right)-\ldots\\
-\frac{E^{(n_2-n_1-1)}(1)}{(n_2-n_1-1)!}(x-1)^{n_2-n_1-1}\left(\sum_{k=n_2}^{n_2}\frac{(u-1)^{k-n_2-1}}{(x-1)^{k-n_1}}\right).
\end{multline*}
Each sum is a geometric series, which can be explicitly evaluated. After simplifying, we get
\begin{multline*}
(E(x)-E(1))\frac{(u-1)^{n_1-n_2}}{x-u}\left(1-\left(\frac{u-1}{x-1}\right)^{n_2-n_1}\right)\\
-E'(1)\frac{(u-1)^{n_1-n_2+1}}{x-u}\left(1-\left(\frac{u-1}{x-1}\right)^{n_2-n_1-1}\right)-\ldots\\
-\frac{E^{(n_2-n_1-1)}(1)}{(n_2-n_1-1)!}\frac{(u-1)^{-1}}{x-u}\left(1-\frac{u-1}{x-1}\right).
\end{multline*}

Therefore $\displaystyle\sum_{k=n_1+1}^{n_2}\Psi_{n_1-k}^{n_1,a_1}(s_1)\Phi_{n_2-k}^{n_2,a_2}(s_2)$ equals
\begin{multline}\label{TheSecondSum}
-\frac{W^{(a_1,-1/2)}(s_1)}{\pi}\frac{1}{2\pi i}\int_{-1}^1\oint\frac{E(x)-E(1)-\ldots-\frac{E^{(n_2-n_1-1)}(1)}{(n_2-n_1-1)!}(x-1)^{n_2-n_1-1}}{E(u)}\\
\times\mathsf{J}_{s_1}^{(a_1,-1/2)}(x)\mathsf{J}_{s_2}^{(a_2,-1/2)}(u)(x-1)^{n_1-n_2}\frac{(1-x)^{a_1}(1+x)^{-1/2}dx}{x-u}\\
+\frac{W^{(a_1,-1/2)}(s_1)}{\pi}\frac{1}{2\pi i}\int_{-1}^1\oint\frac{E(x)-E(1)-\ldots-\frac{E^{(n_2-n_1-1)}(1)}{(n_2-n_1-1)!}(u-1)^{n_2-n_1-1}}{E(u)}\\
\times\mathsf{J}_{s_1}^{(a_1,-1/2)}(x)\mathsf{J}_{s_2}^{(a_2,-1/2)}(u)(u-1)^{n_1-n_2}\frac{(1-x)^{a_1}(1+x)^{-1/2}dx}{x-u}.
\end{multline}
The first term in \eqref{TheSecondSum} has residues only at $u=x$, so simplifies to
\begin{multline*}
\frac{W^{(a_1,-1/2)}(s_1)}{\pi}\int_{-1}^1\frac{\mathsf{J}_{s_1}^{(a_1,-1/2)}\mathsf{J}_{s_2}^{(a_2,-1/2)}}{E(x)}\\ \times\left(E(x)-E(1)-\ldots-\frac{E^{(n_2-n_1-1)}(1)}{(n_2-n_1-1)!}(x-1)^{n_2-n_1-1}\right)(x-1)^{n_1-n_2}(1-x)^{a_1}(1+x)^{-1/2}dx.
\end{multline*}
Adding \eqref{TheFirstSum} and \eqref{TheSecondSum} and rearranging finishes the proof.
\end{proof}

\subsection{\texorpdfstring{Calculating $\phi^{(n_1,a_1),(n_2,a_2)}$}{Calculating phi}}\label{Conv}
For $(n_1,a_1)\triangleleft (n_2,a_2)$, set
\begin{multline*}
\Gamma(n_1,a_1,s_1;n_2,a_2,s_2)\\
=-\frac{W^{(a_1,-1/2)}(s_1)}{\pi}\frac{1}{2\pi i}\int_{-1}^1\oint\mathsf{J}_{s_1}^{(a_1,-1/2)}(x)\mathsf{J}_{s_2}^{(a_2,-1/2)}(u)\\
\times(u-1)^{n_1-n_2}\frac{(1-x)^{a_1}(1+x)^{-1/2}dudx}{x-u},
\end{multline*}
where the $u$-contour contains $[-1,1]$. In this section, we prove that $\phi^{(n_1,a_1),(n_2,a_2)}(s_2,s_1)=\Gamma(n_1,a_1,s_1;n_2,a_2,s_2)$.

\begin{proposition}\label{BLAH2} Assume $(n_1,a_1)\triangleleft (n_2,a_2)$. Then
\begin{eqnarray*}
(\T*\phi)^{n_2-n_1}*\T(s_2,s_1)&=&\Gamma(n_1,-1/2,s_1;n_2,1/2,s_2),\\
(\T*\phi)^{n_2-n_1}(s_2,s_1)&=&\Gamma(n_1,1/2,s_1;n_2,1/2,s_2),\\
(\phi*\T)^{n_2-n_1-1}*\phi(s_2,s_1)&=&\Gamma(n_1,1/2,s_1;n_2,-1/2,s_2),\\
(\phi*\T)^{n_2-n_1}(s_2,s_1)&=&\Gamma(n_1,-1/2,s_1;n_2,-1/2,s_2).
\end{eqnarray*}
\end{proposition}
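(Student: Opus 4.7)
The approach is induction on the distance $d = d(n_1, a_1; n_2, a_2)$. It is convenient to extend the definition by setting $\phi^{(n,a),(n,a)} := \delta$ (the identity convolution kernel); with this convention, the four identities in the proposition coalesce into the single statement that $\phi^{(n_1,a_1),(n_2,a_2)}(s_2, s_1) = \Gamma(n_1,a_1,s_1;n_2,a_2,s_2)$ for all $(n_1,a_1) \trianglelefteq (n_2,a_2)$. The base case $d = 0$ is verified directly: there $(u-1)^{n_1-n_2} = 1$, so the $u$-integrand has only the simple pole at $u = x$; taking this residue and applying the Jacobi orthogonality relation (the same one used in the proof of Corollary \ref{KDelta}) yields $\delta_{s_1, s_2}$, as required.

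Reading off the four definitions, one immediately obtains the two reductions
\begin{align*}
\phi^{(n_1,a_1),(n_2,1/2)} &= \mathcal{T} * \phi^{(n_1,a_1),(n_2,-1/2)}, \\
\phi^{(n_1,a_1),(n_2,-1/2)} &= \phi * \phi^{(n_1,a_1),(n_2-1,1/2)} \quad (n_2 \geq n_1 + 1),
\end{align*}
whose right-hand sides have distance one smaller than the left. By the inductive hypothesis, the proposition reduces to showing that $\Gamma$ satisfies the analogous identities, namely $\mathcal{T} * \Gamma(n_1,a_1,\cdot;n_2,-1/2,\cdot) = \Gamma(n_1,a_1,\cdot;n_2,1/2,\cdot)$ and $\phi * \Gamma(n_1,a_1,\cdot;n_2-1,1/2,\cdot) = \Gamma(n_1,a_1,\cdot;n_2,-1/2,\cdot)$.

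The first identity is immediate from Lemma \ref{UsefulIdentities}(a): since $\mathcal{T}(s_2, z) = W^{(-1/2,-1/2)}(z)\,\mathbf{1}_{z \leq s_2}$, summing against the integrand of $\Gamma(n_1,a_1,s_1;n_2,-1/2,z)$ replaces $\mathsf{J}_z^{(-1/2,-1/2)}(u)$ by $\mathsf{J}_{s_2}^{(1/2,-1/2)}(u)$, which is precisely the $u$-factor in $\Gamma(n_1,a_1,s_1;n_2,1/2,s_2)$. For the second identity, Lemma \ref{UsefulIdentities}(b) gives
\[
\sum_z \phi(s_2, z)\,\mathsf{J}_z^{(1/2,-1/2)}(u) = \sum_{z=0}^{s_2-1} \mathsf{J}_z^{(1/2,-1/2)}(u) = \frac{\mathsf{J}_{s_2}^{(-1/2,-1/2)}(u) - 1}{u - 1},
\]
and since $(u-1)^{n_1-(n_2-1)}\cdot(u-1)^{-1} = (u-1)^{n_1-n_2}$, the $\mathsf{J}_{s_2}^{(-1/2,-1/2)}(u)$ part reproduces $\Gamma(n_1,a_1,s_1;n_2,-1/2,s_2)$ exactly.

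The main (and only nontrivial) obstacle is showing that the residual $-1$ summand contributes nothing. This contribution is proportional to
\[
\int_{-1}^1 \mathsf{J}_{s_1}^{(a_1,-1/2)}(x)(1-x)^{a_1}(1+x)^{-1/2}\left(\frac{1}{2\pi i}\oint_C \frac{(u-1)^{n_1-n_2}}{x-u}\,du\right)dx,
\]
so it suffices to show that the inner $u$-integral vanishes. Since $n_2 - n_1 \geq 1$, the integrand is rational in $u$ with poles only at $u = x$ and $u = 1$ (both inside $C$), and it decays like $|u|^{-(n_2-n_1+1)}$ at infinity. Consequently the residue at $\infty$ vanishes and the two finite residues cancel, completing the inductive step and hence the proof.
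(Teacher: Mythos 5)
Your proof is correct and follows essentially the same route as the paper's: induction on the distance $2n_2+a_2-2n_1-a_1$, with Lemma \ref{UsefulIdentities}(a) and (b) as the driving identities. You tidy this up in two minor ways — starting the induction at distance $0$ via the convention $\phi^{(n,a),(n,a)}=\delta$ (so the base case is a single residue-plus-orthogonality computation rather than the paper's two subcases at distance $1$), and spelling out the inductive step that the paper dismisses as ``a similar argument,'' in particular justifying that the residual $-1$ summand from Lemma \ref{UsefulIdentities}(b) integrates to zero because the inner $u$-integrand decays like $|u|^{-(n_2-n_1+1)}$ and hence has vanishing residue at infinity.
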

\begin{proof}
Proceed by induction on $2n_2+a_2-(2n_1+a_1)$. First assume $2n_2+a_2-(2n_1+a_1)=1$ with $n_1=n_2$. Then
\begin{align*}
&\Gamma(n_1,-1/2,s_1;n_1,1/2,s_2)\\
&=-\frac{W^{(-1/2,-1/2)}(s_1)}{\pi}\frac{1}{2\pi i}\int_{-1}^1\oint\mathsf{J}_{s_1}^{(-1/2,-1/2)}(x)\mathsf{J}_{s_2}^{(1/2,-1/2)}(u)\\
&\ \qquad \times\frac{(1-x)^{-1/2}(1+x)^{-1/2}dudx}{x-u}\\
&=\frac{W^{(-1/2,-1/2)}(s_1)}{\pi}\frac{1}{2\pi i}\int_{-1}^1 \mathsf{J}_{s_1}^{(-1/2,-1/2)}(x)\mathsf{J}_{s_2}^{(1/2,-1/2)}(x)(1-x)^{-1/2}(1+x)^{-1/2}dx\\
&=\frac{W^{(-1/2,-1/2)}(s_1)}{\pi}\frac{1}{2\pi i}\int_{-1}^1 \mathsf{J}_{s_1}^{(-1/2,-1/2)}(x)\displaystyle\sum_{r=0}^{s_2}W^{(-1/2,-1/2)}(r)\mathsf{J}_r^{(-1/2,-1/2)}(x)\\
&\ \qquad \times(1-x)^{-1/2}(1+x)^{-1/2}dx\\
&=\mathcal{T}(s_2,s_1).
\end{align*}
The second equality follows by evaluating the residues at $u=x$, the third equality follows from Lemma \ref{UsefulIdentities}(a), and the fourth equality follows from the orthogonality relations.

Now assume $2n_2+a_2-(2n_1+a_1)=1$ with $n_1\neq n_2$. Then
\begin{align*}
&\Gamma(n_1,1/2,s_1;n_1+1,-1/2,s_2)\\
&=-\frac{W^{(1/2,-1/2)}(s_1)}{\pi}\frac{1}{2\pi i}\int_{-1}^1\oint\mathsf{J}_{s_1}^{(1/2,-1/2)}(x)\mathsf{J}_{s_2}^{(-1/2,-1/2)}(u)(u-1)^{-1}\\
&\ \qquad \times\frac{(1-x)^{1/2}(1+x)^{-1/2}dudx}{x-u}\\
&=\frac{W^{(1/2,-1/2)}(s_1)}{\pi}\frac{1}{2\pi i}\int_{-1}^1\mathsf{J}_{s_1}^{(1/2,-1/2)}(x)(\mathsf{J}_{s_2}^{(-1/2,-1/2)}(x)-1)(x-1)^{-1}\\
&\ \qquad \times(1-x)^{1/2}(1+x)^{-1/2}dx\\
&=\frac{W^{(1/2,-1/2)}(s_1)}{\pi}\frac{1}{2\pi i}\int_{-1}^1\mathsf{J}_{s_1}^{(1/2,-1/2)}(x)\displaystyle\sum_{r=0}^{s_2-1}\mathsf{J}_r^{(1/2,-1/2)}(x)\\
&\ \qquad \times(1-x)^{1/2}(1+x)^{-1/2}dx\\
&=\phi(s_2,s_1).
\end{align*}
The second equality follows by evaluating the residues at $u=x$ and $u=1$, the third equality follows from Lemma \ref{UsefulIdentities}(b), and the fourth equality follows from the orthogonality relations.

The inductive step is proved using a similar argument with the help of Lemmas \ref{UsefulIdentities}(a) and (b).
\end{proof}

\subsection{Computing the Kernel}\label{AddingTogether}
Let us now to compute the correlation kernel, which is given by \eqref{FormulaForTheKernel}.

The first case is when $(n_2,a_2)\trianglelefteq (n_1,a_1)$. Then $\phi^{(n_1,a_1),(n_2,a_2)}=0$. Furthermore, $n_1\geq n_2$, and $\displaystyle\sum_{k=1}^{n_2}\Psi_{n_1-k}^{n_1,a_1}(s_1)\Phi_{n_2-k}^{n_2,a_2}(s_2)$ was calculated in Proposition \ref{AnnoyingSum}.

The second case is when $(n_1,a_1)\triangleleft(n_2,a_2)$ and $n_1=n_2$. This happens only when $a_1=-1/2$ and $a_2=1/2$. Then $-\phi^{(n_1,a_1),(n_2,a_2)}=-\T$, which cancels with the single integral in \eqref{SumToCalculate}.

The final case is when $(n_1,a_1)\triangleleft(n_2,a_2)$ and $n_1<n_2$. Adding Propositions \ref{AnnoyingSum} and \ref{BLAH2}, we have the desired expression, plus an ``extra'' term:
\begin{multline*}
\frac{W^{(a_1,-1/2)}(s_1)}{\pi}\int_{-1}^1\frac{\mathsf{J}_{s_1}^{(a_1,-1/2)}\mathsf{J}_{s_2}^{(a_2,-1/2)}}{E(x)}\left(E(x)-E(1)-\ldots-\frac{E^{(n_2-n_1-1)}(1)}{(n_2-n_1-1)!}(x-1)^{n_2-n_1-1}\right)\\
\times(x-1)^{n_1-n_2}(1-x)^{a_1}(1+x)^{-1/2}dx\\
+\frac{W^{(a_1,-1/2)}(s_1)}{\pi}\frac{1}{2\pi i}\int_{-1}^1\oint\frac{E(u)-E(1)-\ldots-\frac{E^{(n_2-n_1-1)}(1)}{(n_2-n_1-1)!}(u-1)^{n_2-n_1-1}}{E(u)}\\
\times\mathsf{J}_{s_1}^{(a_1,-1/2)}(x)\mathsf{J}_{s_2}^{(a_2,-1/2)}(u)(u-1)^{n_1-n_2}\frac{(1-x)^{a_1}(1+x)^{-1/2}dudx}{x-u},
\end{multline*}
where the $u$-contour contains $[-1,1]$ and does not contain any zeroes of $E(u)$.
We prove this equals zero.

Let us evaluate the double integral. We use the identity
\begin{multline}\label{AlexeiIsAwesomeForComingUpWithThis}
\left(E(u)-E(1)-\ldots-\frac{E^{(n_2-n_1-1)}(1)}{(n_2-n_1-1)!}(u-1)^{n_2-n_1-1}\right)(u-1)^{n_1-n_2}\\
=\frac{1}{2\pi i}\oint\frac{E(w)dw}{(w-u)(w-1)^{n_2-n_1}},
\end{multline}
where the $w$-contour contains $u$ and $1$. The first double integral is now
\begin{multline*}
\frac{W^{(a_1,-1/2)}(s_1)}{\pi}\left(\frac{1}{2\pi i}\right)^2\int_{-1}^1\oint\oint\frac{E(w)}{E(u)}\\
\times\mathsf{J}_{s_1}^{(a_1,-1/2)}(x)\mathsf{J}_{s_2}^{(a_2,-1/2)}(u)\frac{dw}{(w-u)(w-1)^{n_2-n_1}}\frac{(1-x)^{a_1}(1+x)^{-1/2}dudx}{x-u}.
\end{multline*}
The $u$-contour has poles only at $u=x$, so we get
\begin{multline}\label{FirstDoubleIntegral}
\frac{W^{(a_1,-1/2)}(s_1)}{\pi}\frac{1}{2\pi i}\int_{-1}^1\oint\frac{E(w)}{E(x)}\mathsf{J}_{s_1}^{(a_1,-1/2)}(x)\mathsf{J}_{s_2}^{(a_2,-1/2)}(x)\\
\times\frac{(1-x)^{a_1}(1+x)^{-1/2}dwdx}{(x-w)(w-1)^{n_2-n_1}}.
\end{multline}

By \eqref{AlexeiIsAwesomeForComingUpWithThis}, the single integral equals
\begin{multline*}
\frac{W^{(a_1,-1/2)}(s_1)}{\pi}\frac{1}{2\pi i}\int_{-1}^1\oint\frac{E(w)}{E(x)}\mathsf{J}_{s_1}^{(a_1,-1/2)}(x)\mathsf{J}_{s_2}^{(a_2,-1/2)}(x)\\
\times\frac{(1-x)^{a_1}(1+x)^{-1/2}dwdx}{(w-x)(w-1)^{n_2-n_1}},
\end{multline*}
which cancels \eqref{FirstDoubleIntegral}.

The proof of Theorem \ref{theorem2} is now complete.

\section{Asymptotics of the Kernel}\label{AotK}
In this section, we analyze the large-time asymptotics of our system as $\gamma\rightarrow\infty$. Figure~\ref{RefereeFigure} shows the result of a computer simulation of the Markov chain. Notice three
distinct regions: one region where the particles are densely packed, another region where there are no particles, and an intermediate region. In section \ref{BL}, we find explicit formulas for the curves $q_1$ and $q_2$ that separate these three regions. Compare Figures \ref{RefereeFigure} and \ref{QPic}.

The appropriate global scaling is to take the time parameter $\gamma$ to vary proportionally to $tN$, while $n_i$ and $s_i$ vary proportionally to $lN$ and $dN$, respectively. Assume the pairwise differences $n_i-n_j$ and $s_i-s_j$ remain finite and constant. These limits are known as the \textit{bulk limits}. In the limit $N\rightarrow\infty$, the behavior in the intermediate region is described by the \textit{incomplete beta kernel}, which is an extension of the ubiquitous sine kernel. See Theorem \ref{BulkLimits} for the precise statement.

There are also two other scaling limits that we consider. The first
occurs when $\gamma\propto tN$, $n_i\propto lN$ with $n_i-n_j$ finite
constants, and $s_i$ are finite constants. In other words, we are
considering the large-time behavior of our point process at a finite
distance from the wall on the left. This behavior is described by
the \textit{discrete Jacobi kernel}, which we introduce. See
Theorem \ref{DiscreteJacobiKernel} for the precise statement. The
second edge limit occurs when $\gamma\propto N/2$, $n_i\propto
N+\eta_i\sqrt{N}$ for some $\eta_i$, and $s_i\propto \sigma_i
N^{1/4}$ for some $\sigma_i$. In other words, we are zooming in at
the point where $q_1$ meets the $y$-axis in Figure~\ref{QPic}. The
behavior here is described by the \textit{symmetric Pearcey kernel}.
It is an analog of the Pearcey kernel, which has previously appeared
in \cite{kn:ABK, kn:BK, kn:BH, kn:BH2, kn:OR, kn:TW2}. See Theorem
\ref{SymmetricPearceyKernel} for the precise statement.

\subsection{Bulk Limits}\label{BL}

Define the polynomials
\[
R_{t,d,l}(z)= t+(2l+2d-t)z+(2l-2d-t)z^2+tz^3
\]
and
\[
Q_{t,l}(z)=l(l-2t)^3-(2l^2+10lt-t^2)z^2+z^4.
\]

Let
\[
q_2(t,l)=\sqrt{-\dfrac{t^2}{2l^2}+\dfrac{5t}{l}+1+\dfrac{1}{2}\dfrac{t^2}{l^2}\left(1+\dfrac{4l}{t}\right)^{3/2}}
\]
and
\[
q_1(t,l)=
\begin{cases}
\sqrt{-\dfrac{t^2}{2l^2}+\dfrac{5t}{l}+1-\dfrac{1}{2}\dfrac{t^2}{l^2}\left(1+\dfrac{4l}{t}\right)^{3/2}},\ & 0\leq\dfrac{t}{l}<  \dfrac{1}{2},\\
0,\ \ & \dfrac{1}{2}\leq  \dfrac{t}{l}.
\end{cases}
\]
Note that $q_1(t,l)$ and $q_2(t,l)$ only depend on $t/l$. They are graphed in Figure~\ref{QPic}.

\begin{proposition}\label{prove!} Assume $l,d,t>0$.

(1) $R_{t,d,l}$ has two complex conjugate roots iff $l\cdot q_1(t,l)<d<l\cdot q_2(t,l)$.

(2) Let $z_0$ denote the nonreal root of $R_{t,d,l}$ in the upper-half plane, if it exists. Then $\vert z_0\vert>1$.

(3) Let $z_{max}$ denote the largest real root of $R_{t,d,l}$. If $d \geq l\cdot q_2(t,l)$, then $z_{max}>1$.

(4) Let $z_{min}$ denote the smallest real root of $R_{t,d,l}$. If $d \leq l\cdot q_1(t,l)$, then $z_{min}<-1$.
\end{proposition}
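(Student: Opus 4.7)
The plan is to reduce all four parts to an analysis of the discriminant of $R_{t,d,l}(z)$, viewed as a cubic in $z$, together with sign evaluations at $z=0,\pm 1$ and continuity-in-$d$ arguments.

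\textbf{Discriminant and part (1).} Apply the standard discriminant formula for a cubic $Az^3+Bz^2+Cz+E$ to $R_{t,d,l}$, with $(A,B,C,E) = (t,\, 2l-2d-t,\, 2l+2d-t,\, t)$. Setting $p = 2l-t$ so that $B+C = 2p$, $BC = p^2 - 4d^2$, and $B^3+C^3 = 2p(p^2+12d^2)$ simplifies the algebra, and one obtains $\Delta(d) = 16\, Q_{t,l}(d)$. Next factor the biquadratic $Q_{t,l}$: checking the elementary symmetric functions of $q_1^2,q_2^2$ against its coefficients gives $q_1^2+q_2^2 = 2 + 10t/l - t^2/l^2$ and $q_1^2 q_2^2 = (l-2t)^3/l^3$, so $Q_{t,l}(d) = (d^2 - l^2 q_1^2)(d^2 - l^2 q_2^2)$. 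For $t<l/2$ both $q_i^2\geq 0$; for $t\geq l/2$ the formal $q_1^2\leq 0$, consistent with the convention $q_1=0$. In either case, for $d>0$ one has $Q_{t,l}(d)<0 \Leftrightarrow l q_1<d<l q_2$. Since a real cubic has one real and two complex conjugate roots iff its discriminant is negative, part (1) follows.

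\textbf{Part (2) via Vieta.} Direct substitution gives $R_{t,d,l}(-1) = -4d < 0$ and $R_{t,d,l}(0) = t > 0$, so $R_{t,d,l}$ has a real root in $(-1,0)$ for every $d>0$; in the complex-root regime of part (1) this is the unique real root $r$. By Vieta, $\alpha\beta\gamma = -t/t = -1$; writing the complex pair as $z_0, \overline{z_0}$, we get $r\,|z_0|^2 = -1$, so $|z_0|^2 = -1/r > 1$, giving $|z_0| > 1$.

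\textbf{Parts (3) and (4) by continuity.} The crucial observation is that $R_{t,d,l}(1) = 4l > 0$ and $R_{t,d,l}(-1) = -4d \neq 0$ for all $d>0$, so as $d$ varies no real root of $R_{t,d,l}$ can cross $z=1$ or $z=-1$. For (3), analyze the three real roots for $d\to\infty$ via Vieta: from $\alpha+\beta+\gamma = (2d+t-2l)/t$, $\alpha\beta+\alpha\gamma+\beta\gamma = (2d+2l-t)/t$, $\alpha\beta\gamma=-1$, one finds $\gamma \sim 2d/t \to \infty$, $\beta = 1 + t/(2d) + o(1/d) > 1$, $\alpha = -t/(2d) + o(1/d) \in (-1,0)$. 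So two of the three real roots exceed $1$ for large $d$, and by continuity on $(l q_2,\infty)$ plus the no-crossing property at $z=1$, they remain in $(1,\infty)$ down to $d = l q_2$, where they coalesce into a double root strictly greater than $1$. Hence $z_{max} > 1$. For (4), assume $t < l/2$ (otherwise $q_1=0$ makes $d \leq l q_1$ vacuous under $d>0$). At $d=0$, $R_{t,0,l}(z) = (z+1)\bigl(tz^2 + (2l-2t)z + t\bigr)$; the quadratic factor has discriminant $4l(l-2t)>0$ and roots $z_\pm$ satisfying $z_+ z_- = 1$, $z_+ + z_- = 2(t-l)/t < 0$, hence $z_- < -1 < z_+ < 0$. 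The implicit function theorem at $(z,d)=(-1,0)$, with $\partial_z R(-1,0) = 4t-2l < 0$ and $\partial_d R(-1,0) = -4$, gives the perturbation $-1 + 4d/(4t-2l) + O(d^2)$, which moves into $(-\infty,-1)$ for small $d>0$. By continuity and no-crossing at $z=-1$, two real roots remain in $(-\infty,-1)$ for all $d \in (0, l q_1]$, coalescing at $d = l q_1$ into a double root $< -1$. Hence $z_{min} < -1$.

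\textbf{Main obstacle.} The principal hurdle is the algebraic identity $\Delta(d) = 16\,Q_{t,l}(d)$: while each individual step is elementary, assembling the pieces so that the constant term of $\Delta$ collapses to $16 l(l-2t)^3$ and the $d^2$-coefficient to $-16(2l^2+10lt-t^2)$ requires careful bookkeeping (this is what motivates introducing $p = 2l-t$ and exploiting the symmetric combinations $B+C$, $BC$, $B^3+C^3$). Once the discriminant identity is in place, the rest — Vieta's formulas, sign tests at $z = 0,\pm 1$, the large-$d$ asymptotics, and the single implicit-function perturbation at $d=0$ — is short and direct.
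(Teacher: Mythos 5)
Your proof of (1) and (2) matches the paper's in substance; for (1) the paper locates the positive zeros of $Q_{t,l}$ by intermediate-value sign checks at $0$, $l$, and $+\infty$ rather than via your explicit factoring $Q_{t,l}(d) = (d^2 - l^2 q_1^2)(d^2 - l^2 q_2^2)$, but this is cosmetic, and (2) is word-for-word the same Vieta/sign argument.

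For (3) and (4) you take a genuinely different route. The paper computes the sum of the three real roots, $1 + 2(d-l)/t$, shows it is $>0$ (for (3)) or $<-1$ (for (4)) by invoking the bounds $1 + 2l(q_2-1)/t > 0$ and $1 + 2l(q_1-1)/t < -1$ derived from the explicit expressions for $q_1, q_2$, and then combines this with the root in $(-1,0)$ and the product identity to pin down $z_{\max}$ and $z_{\min}$. You instead track the roots continuously in $d$ from one end of the parameter interval (large $d$ in (3), $d = 0$ in (4)) and exploit the no-crossing constraints $R_{t,d,l}(1) = 4l > 0$ and $R_{t,d,l}(-1) = -4d \neq 0$, together with the fact from part (1) that the roots stay real throughout the relevant $d$-range. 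That buys you an argument that avoids verifying the two somewhat opaque inequalities on $q_1, q_2$, at the cost of a large-$d$ asymptotic expansion in (3) and an implicit-function perturbation at $d=0$ in (4). Both routes are sound.

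One small slip in your (3): the stated expansion $\beta = 1 + t/(2d) + o(1/d)$ is wrong; Vieta consistency with $\alpha + \beta + \gamma = 1 + 2(d-l)/t$ (or Newton's step $1 - R(1)/R'(1) = 1 - 4l/(6l-2d)$) gives $\beta = 1 + 2l/d + o(1/d)$, and the $O(1)$ correction to $\gamma$ is $-2l/t$ rather than $0$. Since $2l/d > 0$, the inequality $\beta > 1$ for large $d$ that your continuity argument actually needs is unaffected, so the proof goes through; you may simply wish to correct the coefficient.
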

\begin{proof}
(1) $R_{t,d,l}(z)$ has nonreal roots iff its discriminant is negative. The discriminant of $R_{t,d,l}(z)$ is $16Q_{t,l}(d)$. Since $Q_{t,l}(z)=Q_{t,l}(-z)$, it crosses $(0,\infty)$ at most two times. If $t/l>1/2$, then $Q_{t,l}(0)<0$ and $Q_{t,l}(+\infty)=+\infty$, so therefore $Q_{t,l}$ crosses $(0,\infty)$ an odd number of times. Thus $Q_{t,l}$ has one positive real root. By using the explicit formula for the roots of a quadratic polynomial, we see that this root is $q_2(t,l)\cdot l$. So in this case, $Q_{t,l}(d)$ is negative iff $q_1(t,l)\cdot l=0<d<q_2(t,l)\cdot l$.

If $0<t/l<1/2$, then $Q_{t,l}(0)>0$ and $Q_{t,l}(l)=lt(-16l^2+13lt-8t^2)<0$ and $Q_{t,l}(+\infty)=+\infty$, so $Q_t$ has two positive real roots. By using the same formula, we see that the roots are $q_1(t,l)$ and $q_2(t,l)$. Once again, $Q_{t,l}(d)$ is negative iff $q_1(t,l)\cdot l<d<q_2(t,l)\cdot l$.

(2) The product of the roots of $R_{t,d,l}$ equals $-1$. So it suffices to show that $R_{t,d,l}$ has a root in the interval $(-1,1)$. In fact, $R_{t,d,l}$ has a root in the interval $(-1,0)$, because $R_{t,d,l}(-1)=-4d<0$ and $R_{t,d,l}(0)=t>0$.

(3) By (1), $R_{t,d,l}$ has three real roots. The product of these roots is $-1$, and their sum is $1+2(d-l)/t \geq 1 + 2l(q_2(t,l)-1)/t>0$ (this follows from the explicit expression for $q_2(t,l)$). We just showed in (2) that one of these roots is in the interval $(-1,0)$. Therefore the sum of the other two roots is positive, and their product is greater than $1$. This holds only if $z_{max}>1$.

(4) Because $q_1(t,l)$ is positive, $t/l$ must be less than $1/2$. By (1), $R_{t,d,l}$ has three real roots. The product of these roots is $-1$, and their sum is $1+2(d-l)/t \leq 1 + 2l(q_1(t,l)-1)/t < -1$. We just showed in (2) that one of these roots is in the interval $(-1,0)$. Therefore the sum of the other two roots is negative, and their product is greater than $1$. This holds only if $z_{min}<-1$.
\end{proof}

Using the notation of Proposition \ref{prove!}, define $z_0=z_0(t,d,l)$ to be
\[
z_0(t,d,l)=
\begin{cases}
z_{min},\ \text{if} \ d \leq l\cdot q_1(t,l)\\
z_0,\ \text{if} \ l\cdot q_1(t,l)<d<l\cdot q_2(t,l)\\
z_{max},\ \text{if}\ d \geq l\cdot q_2(t,l).
\end{cases}
\]

Define the function $S_{t,d,l}(z)$ as
\begin{equation}\label{S}
S_{t,d,l}(z)=t\frac{z+z^{-1}}{2}+l\log\left(\frac{z+z^{-1}}{2}-1\right)-d\log z.
\end{equation}
Note that $dS_{t,d,l}/dz=R_{t,d,l}(z)/(2z^2(z-1))$, so $z_0$ is a critical point of $S_{t,d,l}(z)$.

\begin{center}
\begin{figure}[htp]
\caption{The $x$-axis is $d$. The $y$-axis is $l$. The left curve is $d=l\cdot q_1(1/2,l)$ and the right curve is $d=l\cdot q_2(1/2,l)$.}
\begin{center}
\includegraphics[height=7cm, width=7cm]{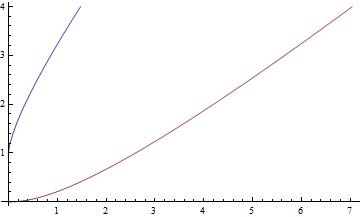}
\end{center}
\label{QPic}
\end{figure}
\end{center}

The incomplete beta kernel $B(k,l;\zeta)$ is defined by (cf. \cite{kn:OR0})
\[
B(k,l;\zeta)=\frac{1}{2\pi i}\int_{\bar{\zeta}}^{\zeta} (1-z)^k z^{-l-1}dz,
\]
where the contour of integration crosses $(0,1)$ if $k\geq 0$ and $(-\infty,0)$ if $k<0$.

\begin{theorem}\label{BulkLimits}
Let $\gamma$ and $\{s_i: \ 1\leq i\leq k\}$ and $\{n_i: \ 1\leq
i\leq k\}$ depend on $N$ in such a way that $\gamma/N\rightarrow
t>0$ and $s_i/N\rightarrow d>0$ and $n_i/N\rightarrow l$.
Furthermore, assume the differences $s_i-s_j$ and $n_i-n_j$ are all
finite and independent of $N$. For each $j$, set $\tilde{n}_j=2n_j+a_j-1/2$ and
$\tilde{s}_j=s_j-n_j$. Then 
\begin{multline*}
\lim_{N\to\infty}\rho_{k,\mathfrak{X}}^{\gamma}(n_1,a_1,s_1;\ldots;n_k,a_k,s_k)\\
=\begin{cases}
\det\left[B(\tilde{n}_i-\tilde{n}_j,\tilde{s}_i+\tilde{n}_i-\tilde{s}_j-\tilde{n}_j;z_0)\right]_{i,j=1}^k,\ &\text{if}\ q_1(t,l) < d/l < q_2(t,l),\\
 0,\ &\text{if}\ d/l \geq q_2(t,l),\\
 1,\ &\text{if}\ d/l \leq q_1(t,l).
 \end{cases}
\end{multline*}
\end{theorem}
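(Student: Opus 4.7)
The plan is to apply a steepest-descent analysis to the double contour integral representation of the correlation kernel $K^{\omega}$ given in Theorem \ref{theorem2}. For the Plancherel case all $\alpha_i=\beta_i=0$ so $E^{\omega}(x)=e^{\gamma(x-1)}$. Using the substitutions $x=(z+z^{-1})/2$ and $u=(w+w^{-1})/2$, together with the explicit formulas \eqref{ztothes}--\eqref{ztothes2} for $\mathsf{J}_s^{(\pm 1/2,-1/2)}$, the ratio $E^{\omega}(x)/E^{\omega}(u)$, the factors $(x-1)^{n_1}/(u-1)^{n_2}$, and the powers $z^{s},w^{-s}$ coming from the Jacobi polynomials combine into an integrand of the form $e^{N(S_{t,d,l}(z)-S_{t,d,l}(w))}$ times rational/algebraic functions of $z,w$. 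This is where $S_{t,d,l}$ from \eqref{S} appears naturally, and since $dS_{t,d,l}/dz=R_{t,d,l}(z)/(2z^{2}(z-1))$ its saddle points are precisely the roots of $R_{t,d,l}$ studied in Proposition \ref{prove!}.

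The next step is to identify $z_0=z_0(t,d,l)$ as the relevant critical point of $S_{t,d,l}$ and choose steepest-descent contours: the $z$-contour is a deformation of the image of $[-1,1]$ (an arc on the unit circle) passing through $z_0$ and $\bar{z_0}$, while the $w$-contour is a deformation of the original $u$-contour around $[-1,1]$ also passing through $z_0,\bar{z_0}$. In the liquid regime $q_1(t,l)<d/l<q_2(t,l)$, the saddle $z_0$ is genuinely complex with $|z_0|>1$ by Proposition \ref{prove!}(2). When the two contours are deformed to pass through $z_0$ and $\bar{z_0}$ they cross each other in two small neighborhoods, and the residue contribution at $w=z$ from these crossings produces a one-dimensional integral between $\bar{z_0}$ and $z_0$, while the remaining double integral is $O(N^{-1/2})$ because $\mathrm{Re}(S_{t,d,l}(z)-S_{t,d,l}(w))<0$ off the critical set. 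A direct computation of this residue, combined with the single integral that appears in the first case of Theorem \ref{theorem2}, and using the finite-index shifts through $\tilde n_i=2n_i+a_i-1/2$ and $\tilde s_i=s_i-n_i$, should collapse to the incomplete beta kernel $B(\tilde n_i-\tilde n_j,\tilde s_i+\tilde n_i-\tilde s_j-\tilde n_j;z_0)$.

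In the two frozen regimes the saddle is real. For $d/l\ge q_2(t,l)$ we have $z_0>1$ by Proposition \ref{prove!}(3), and the $z$- and $w$-contours can be deformed so that they no longer cross and the integrand decays exponentially along them; the kernel therefore tends to $0$ pointwise, hence so does every correlation. For $d/l\le q_1(t,l)$ we have $z_0<-1$ by Proposition \ref{prove!}(4); here I would pass to the complementary kernel $K^{\omega}_{\Delta}$ of Corollary \ref{KDelta} (the particle-hole dual) and repeat the same deformation to conclude that this dual kernel tends to $0$. By the particle-hole involution, this is equivalent to the original density tending to $1$ and all $k$-point correlations tending to $1$.

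The main technical obstacle will be making the saddle-point estimate uniform enough in the fixed shifts $n_i-n_j$, $s_i-s_j$ to control the integrand near the branch points $x,u=\pm 1$ (where the weights $(1-x)^{a_1}(1+x)^{-1/2}$ are singular and the reparametrization $x\mapsto z$ is not a diffeomorphism), and to justify that the non-critical part of the double integral is genuinely $o(1)$. A secondary subtlety is tracking the scalar prefactors $W^{(a_1,-1/2)}(s_1)/\pi$ and the Jacobi normalizations through the change of variables so that after the saddle-point extraction the answer consolidates precisely into $B(\cdot,\cdot;z_0)$ rather than a spurious multiple of it; a clean way to do this is to verify the identification on the simplest one-point case first and then extend by the determinantal structure.
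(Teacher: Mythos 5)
Your overall strategy is the same as the paper's: substitute $x=(z+z^{-1})/2$, $u=(v+v^{-1})/2$, recognize $S_{t,d,l}$ from \eqref{S} as the exponent, use Proposition~\ref{prove!} to locate the relevant root $z_0$ of $R_{t,d,l}$ as the saddle, deform contours through the saddle, and assemble the incomplete beta kernel from the $u=x$ residues together with the single-integral part of $K^\omega$. That matches the paper's Lemmas~\ref{Neo}, \ref{Wot}, \ref{Three} step for step.

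The one place where your plan is off is in the two frozen regimes, where you assert that a kernel ``tends to $0$ pointwise.'' In neither case is that literally true, and the fix differs from what you propose. For $d/l\geq q_2(t,l)$, the deformation kills the double integral, but the single-integral term (Lemma~\ref{Neo}) survives whenever $(n_i,a_i)\triangleright(n_j,a_j)$, so the limiting matrix is strictly triangular rather than zero; the paper concludes $\det\to 0$ from the triangular structure, not from pointwise vanishing of the kernel. Symmetrically, for $d/l\leq q_1(t,l)$ the dual kernel $K_\Delta$ from Corollary~\ref{KDelta} does \emph{not} go to $0$ off the diagonal (its entries with $(n_j,a_j)\triangleleft(n_i,a_i)$ again converge to nonzero residue integrals); only the diagonal entries $K_\Delta(x,x)=1-K(x,x)$ tend to $0$. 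Your route can still be made to close --- from $K_\Delta(x_i,x_i)\to 0$ one gets $\rho_1^\Delta\to 0$, hence $\rho_1\to 1$, and a union bound gives $\rho_k\to 1$ for all $k$ --- but that is an extra probabilistic step not present in the paper, which instead observes directly that the limiting $K$ is triangular with diagonal entries $1$ (each evaluating to $\frac{1}{2\pi i}\oint z^{-1}\,dz=1$), so $\det\to 1$. Either argument works; just be aware that ``the kernel tends to $0$'' is false as stated, and the paper's triangular-determinant observation is the shorter closing move.

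One further point you flag --- uniformity near $z=\pm1$ and the branch singularities of $(1-x)^{a_1}(1+x)^{-1/2}$ --- is handled in the paper implicitly by choosing the deformed $v$-contour (a circle through $\cos\theta$, equivalently an arc from $e^{-i\theta}$ to $e^{i\theta}$ outside the unit circle) to stay away from the singular set, so that the residues at $v=z^{-1}$ are never picked up (this is exactly where Proposition~\ref{prove!}(2)--(4), i.e.\ $|z_0|>1$, $z_{\max}>1$, $z_{\min}<-1$, is used). Your concern is legitimate, but it is resolved by contour placement rather than by any separate estimate.
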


\begin{proof}
Start with the case that $q_1(t,l) < d/l < q_2(t,l).$ Let $e^{i\theta}$ be a point on the unit circle such that $\Re(S(e^{i\theta})-S(z_0))>0$, where $S(z)=S_{t,d,l}(z)$, cf. \eqref{S}. This may be any point in the dark region in Figures \ref{Picture1},\ref{Picture2}; the existence of such points is easiliy verified by looking at the level lines $\Re((S(z))=\Re(S(z_0))$. Recall that $z_0$ is a critical point of $S(z)$. In the expression for the correlation kernel (Theorem \ref{theorem2}), deform the $u$-contour to a circle centered at $1$ and passing through $\cos\theta$. This causes the integral to pick up residues at $u=x$, where $x$ varies from $-1$ to $\cos\theta$. These residues occur as expression \eqref{ExtraIntegrall} below.

Now make the substitution $x=(z+z^{-1})/2$. The interval $[-1,1]$ becomes the unit circle and $[-1,\cos\theta]$ becomes an arc from $e^{-i\theta}$ to $e^{i\theta}$ that crosses $(-\infty,0)$. Let us also make the change of variable $u=(v+v^{-1})/2$. Set the $v$-contour to be an arc outside the unit circle that connects $e^{-i\theta}$ to $e^{i\theta}$. The weight on $[-1,1]$ becomes
\[
(1-x)^{a_1}(1+x)^{-1/2}dx\rightarrow
\begin{cases}
\dfrac{dz}{2iz},\ & a_1=-1/2,\\
\dfrac{-(z^{1/2}-z^{-1/2})^2dz}{4iz}, \ & a_1=1/2.
\end{cases}
\]
Denote the right hand side by $m_{a_1}(dz)$. Then $K_N^{\gamma}$ equals
\begin{multline}\label{TheDoubleIntegral}
K_N^{\gamma}(n_1,a_1,s_1;n_2,a_2,s_2)\\
=\frac{W^{(a_1,-1/2)}(s_1)}{2\pi^2 i}\int_{e^{-i\theta}}^{e^{i\theta}}\oint_{\vert z\vert=1}\frac{E^{\omega}(\frac{z+z^{-1}}{2})}{E^{\omega}(\frac{v+v^{-1}}{2})} \mathsf{J}_{s_1}^{(a_1,-1/2)}\left(\frac{z+z^{-1}}{2}\right)\mathsf{J}_{s_2}^{(a_2,-1/2)}\left(\frac{v+v^{-1}}{2}\right)\\
\times\frac{(\frac{z+z^{-1}}{2}-1)^{n_1}}{(\frac{v+v^{-1}}{2}-1)^{n_2}} \frac{1-v^{-2}}{z+z^{-1}-v-v^{-1}} m_{a_1}(dz) dv\\
\end{multline}
\begin{multline}\label{ExtraIntegral}
+\mathbf{1}_{(n_1,a_1)\trianglerighteq(n_2,a_2)}\Bigg(\frac{W^{(a_1,-1/2)}(s_1)}{\pi}\oint_{\vert z\vert=1}\mathsf{J}_{s_1}^{(a_1,-1/2)}\left(\frac{z+z^{-1}}{2}\right)\mathsf{J}_{s_2}^{(a_2,-1/2)}\left(\frac{z+z^{-1}}{2}\right)\\
\times\left(\frac{z+z^{-1}}{2}-1\right)^{n_1-n_2}m_{a_1}(dz)\Bigg)
\end{multline}
\begin{multline}\label{ExtraIntegrall}
+\Bigg(\frac{W^{(a_1,-1/2)}(s_1)}{\pi}\int_{e^{-i\theta}}^{e^{i\theta}}\mathsf{J}_{s_1}^{(a_1,-1/2)}\left(\frac{z+z^{-1}}{2}\right)\mathsf{J}_{s_2}^{(a_2,-1/2)}\left(\frac{z+z^{-1}}{2}\right)\\
\times\left(\frac{z+z^{-1}}{2}-1\right)^{n_1-n_2}m_{a_1}(dz)\Bigg).
\end{multline}

\begin{lemma}\label{Neo}
\begin{multline*}
\displaystyle\lim_{N\rightarrow\infty}\eqref{ExtraIntegral} = \mathbf{1}_{\tilde{n}_1\geq\tilde{n}_2} \\
\times\left(\frac{1}{2\pi i}\oint_{\vert z\vert=const} z^{s_2+n_2+a_2-(s_1+n_1+a_1)-1}(1-z)^{2n_1+a_1-2n_2-a_2}dz \frac{(-1)^{a_1-a_2}}{2^{n_1+a_1-n_2-a_2}}\right).
\end{multline*}
\end{lemma}
\begin{proof}
By using \eqref{ztothes}, when $a_1=a_2=-1/2$, \eqref{ExtraIntegral} equals (up to $\mathbf{1}_{(n_1,a_1)\trianglerighteq(n_2,a_2)}$)
\begin{equation*}
\frac{2}{2\pi i}\oint_{\vert z\vert=1}\left(\frac{z^{s_1}+z^{-s_1}}{2}\right)\left(\frac{z^{s_2}+z^{-s_2}}{2}\right)\left(\frac{z+z^{-1}}{2}-1\right)^{n_1-n_2}\frac{dz}{z}.
\end{equation*}
Expanding the first two parantheses yields four terms. For the term corresponding to $z^{s_1+s_2}$, deform the contour to a circle of radius less than $1$. This will make the integral exponentially small as $N\rightarrow\infty$. For the term corresponding to $z^{-s_1-s_2}$, deform the contour to a circle of radius greater than $1$; again, this integral vanishes as $N\rightarrow\infty$. The remaining term is
\begin{equation}\label{Pine}
\frac{1}{4\pi i}\oint_{\vert z\vert=1} z^{s_2-s_1-1}\left(\frac{z+z^{-1}}{2}-1\right)^{n_1-n_2}dz+\frac{1}{4\pi i}\oint_{\vert z\vert=1} z^{s_1-s_2-1}\left(\frac{z+z^{-1}}{2}-1\right)^{n_1-n_2}dz.
\end{equation}
Making the substitution $z\rightarrow z^{-1}$ in the second integral, \eqref{Pine} becomes
\begin{equation}\label{Amber}
\frac{1}{2\pi i}\oint_{\vert z\vert=1} z^{s_2-s_1-1}\left(\frac{z+z^{-1}}{2}-1\right)^{n_1-n_2}dz.
\end{equation}
When $a_1=a_2=1/2$, \eqref{ExtraIntegral} equals
\[
\frac{1}{\pi}\oint_{\vert z\vert=1} \frac{z^{s_1+1/2}-z^{-s_1-1/2}}{z^{1/2}-z^{-1/2}}\frac{z^{s_2+1/2}-z^{-s_2-1/2}}{z^{1/2}-z^{-1/2}}\left(\frac{z+z^{-1}}{2}-1\right)^{n_1-n_2}(2-z-z^{-1})\frac{dz}{4iz}.
\]
Making similar deformations and substitutions, we see that this expression also converges to \eqref{Amber}.
By a similar argument, when $a_1=-1/2$ and $a_2=1/2$, \eqref{ExtraIntegral} converges to
\begin{equation}\label{Banana}
\frac{1}{2\pi i}\oint_{\vert z\vert=1} (z^{s_2-s_1-1/2})(z^{1/2}-z^{-1/2})\left(\frac{z+z^{-1}}{2}-1\right)^{n_1-n_2-1}dz.
\end{equation}
When $a_1=1/2$ and $a_2=-1/2$, \eqref{ExtraIntegral} converges to
\begin{equation}\label{Coke}
\frac{1}{4\pi i}\oint_{\vert z\vert=1} z^{s_2-s_1-3/2}(z^{1/2}-z^{-1/2})\left(\frac{z+z^{-1}}{2}-1\right)^{n_1-n_2}dz.
\end{equation}
Using $\frac{z+z^{-1}}{2}-1=\frac{z^{-1}(z-1)^2}{2}$ in \eqref{Amber},\eqref{Banana} and \eqref{Coke} shows that the lemma holds in all cases.
\end{proof}

\begin{lemma}\label{Three}
\begin{multline*}
\displaystyle\lim_{N\rightarrow\infty}\eqref{ExtraIntegrall} = 
\left(\frac{1}{2\pi i}\int_{e^{-i\theta}}^{e^{i\theta}} z^{s_2+n_2+a_2-(s_1+n_1+a_1)-1}(1-z)^{2n_1+a_1-2n_2-a_2}dz \frac{(-1)^{a_1-a_2}}{2^{n_1+a_1-n_2-a_2}}\right),
\end{multline*}
where the contour of integration crosses $(-\infty,0)$.
\end{lemma}
\begin{proof}
The proof is almost exactly the same as the proof of Lemma \ref{Neo}. The only difference is that the integration in \eqref{ExtraIntegrall} is over an arc from $e^{-i\theta}$ to $e^{i\theta}$ that crosses $(-\infty,0)$, rather than the unit circle.
\end{proof}

\begin{lemma}\label{Wot}
Assume $q_1(t,l) < d/l < q_2(t,l)$. Then
\begin{multline*}
\displaystyle\lim_{N\rightarrow\infty}\eqref{TheDoubleIntegral} \rightarrow \frac{1}{2\pi i}\int_{\bar{z_0}}^{e^{-i\theta}} z^{s_2-n_2-(s_1-n_1)-1}(1-z)^{2n_1+a_1-2n_2-a_2}dz\frac{(-1)^{a_1-a_2}}{2^{n_1+a_1-n_2-a_2}}\\
+\frac{1}{2\pi i}\int_{e^{i\theta}}^{z_0} z^{s_2-n_2-(s_1-n_1)-1}(1-z)^{2n_1+a_1-2n_2-a_2}dz\frac{(-1)^{a_1-a_2}}{2^{n_1+a_1-n_2-a_2}}.
\end{multline*}
\end{lemma}
\begin{proof}
Recall that in expression \eqref{Wot}, the $v$-arc goes outside the unit circle. We only do the calculation explicitly when $a_1=a_2=-1/2$, because the other cases are similar. By \eqref{ztothes},
\[
\mathsf{J}_{s_1}^{(-1/2,-1/2)}\left(\frac{z+z^{-1}}{2}\right)\mathsf{J}_{s_2}^{(-1/2,-1/2)}\left(\frac{v+v^{-1}}{2}\right)=\left(\frac{z^{s_1}+z^{-s_1}}{2}\right)\left(\frac{v^{s_2}+v^{-s_2}}{2}\right)
\]
Expanding the parantheses on the right hand side yields four terms. This means that \eqref{TheDoubleIntegral} can be written as the sum of four terms. We now proceed to evaluate each of these terms separately.

The term corresponding to $z^{-s_1}v^{s_2}$ equals
\begin{multline}\label{TheFirstTerm}
\frac{W^{(a_1,-1/2)}(s_1)}{4\cdot 2\pi^2 i}\oint\int_{e^{-i\theta}}^{e^{i\theta}}\frac{E^{\omega}(\frac{z+z^{-1}}{2})}{E^{\omega}(\frac{v+v^{-1}}{2})} \frac{z^{-s_1}}{v^{-s_2}} \frac{(\frac{z+z^{-1}}{2}-1)^{n_1}}{(\frac{v+v^{-1}}{2}-1)^{n_2}}\\
\times\frac{1}{\frac{z+z^{-1}}{2}-\frac{v+v^{-1}}{2}}\frac{1-v^{-2}}{2}dv\frac{dz}{2 i z}.
\end{multline}
The part of the integrand that depends on $N$ equals $\exp(N(S(z)-S(z_0))$.

\begin{figure}[htp]
\caption{On the left is $\Re(S(z)-S(z_0))$. Shaded regions indicate $\Re>0$ and white regions indicate $\Re<0$. The double zero occurs at $z_0$. The arc $v$ goes from $e^{-i\theta}$ to $e^{i\theta}$. The unit circle has been drawn on the right.}
\includegraphics[totalheight=0.30\textheight]{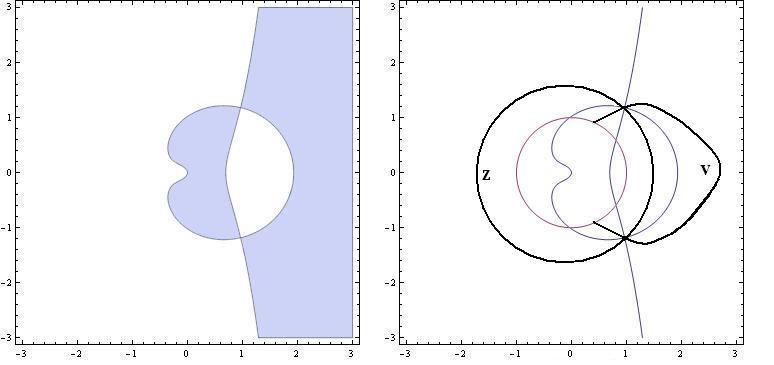}
\label{Picture1}
\end{figure}

With the deformations as shown in Figure~\ref{Picture1}, the double integral asymptotically evaluates to zero. Since $\vert z_0\vert>1$ (Proposition \ref{prove!}), the contours can be deformed without picking up residues at $v=z^{-1}$. The residues at $v=z$ are
\begin{multline}\label{Residues1}
\frac{1}{4\pi i}\int_{\bar{z_0}}^{e^{-i\theta}}z^{s_2-s_1-1}\left(\frac{z+z^{-1}}{2}-1\right)^{n_1-n_2}dz\\ +\frac{1}{4\pi i}\int_{e^{i\theta}}^{z_0}z^{s_2-s_1-1}\left(\frac{z+z^{-1}}{2}-1\right)^{n_1-n_2}dz.
\end{multline}

To calculate the term corresponding to $z^{s_1}v^{s_2}$, make the substitution $z\leftrightarrow z^{-1}$. Then the integrand and contour remain the same, so this term also equals \eqref{Residues1}.

It remains to calculate the terms corresponding to $z^{s_1}v^{-s_2}$ and $z^{-s_1}v^{-s_2}$. Because we can substitute $z\leftrightarrow z^{-1}$, it suffices to calculate the term corresponding to $z^{-s_1}v^{-s_2}.$ Substituting $v\leftrightarrow v^{-1}$, the double integral again becomes \eqref{TheFirstTerm}, except now with the $v$-arc \emph{inside} the unit circle. In this case, the double integral is asymptotically zero, because we can deform the contours as shown in Figure~\ref{Picture2} without picking up any residues.

\begin{figure}[htp]
\caption{On the left is $\Re(S(z)-S(z_0))$. Shaded regions indicate $\Re>0$ and white regions indicate $\Re<0$. The double zero occurs at $z_0$. The arc $v$ goes from $e^{-i\theta}$ to $e^{i\theta}$. The unit circle has been drawn on the right.}
\includegraphics[totalheight=0.30\textheight]{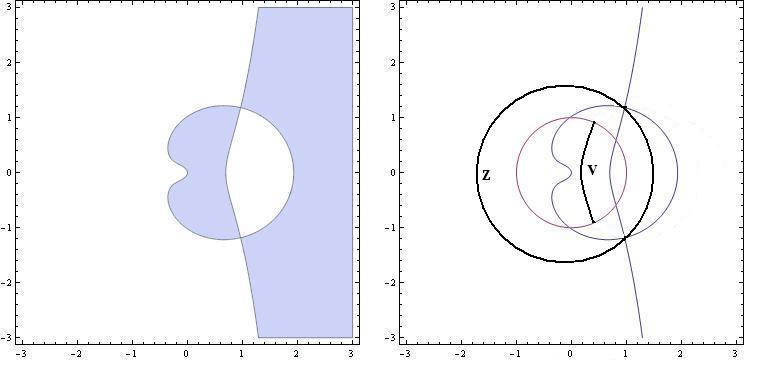}
\label{Picture2}
\end{figure}

Collecting all the terms shows that we get
\begin{multline}
\frac{1}{2\pi i}\int_{\bar{z_0}}^{e^{-i\theta}}z^{s_2-s_1-1}\left(\frac{z+z^{-1}}{2}-1\right)^{n_1-n_2}dz\\
+\frac{1}{2\pi i}\int_{e^{i\theta}}^{z_0}z^{s_2-s_1-1}\left(\frac{z+z^{-1}}{2}-1\right)^{n_1-n_2}dz
\end{multline}
\end{proof}

Lemmas \ref{Neo}, \ref{Wot} and \ref{Three} prove the theorem when $q_1(t,l) < d/l < q_2(t,l)$.

Now assume $d/l \geq q_2(t,l)$. This time, do not deform the $u$-contour. With the same substitutions, we have 

\begin{multline}\label{TheDoubleIntegral'}
K_N^{\gamma}(n_1,a_1,s_1;n_2,a_2,s_2)\\
=\frac{W^{(a_1,-1/2)}(s_1)}{2\pi^2 i}\oint\oint\frac{E^{\omega}(\frac{z+z^{-1}}{2})}{E^{\omega}(\frac{v+v^{-1}}{2})} \mathsf{J}_{s_1}^{(a_1,-1/2)}\left(\frac{z+z^{-1}}{2}\right)\mathsf{J}_{s_2}^{(a_2,-1/2)}\left(\frac{v+v^{-1}}{2}\right)\\
\times\frac{(\frac{z+z^{-1}}{2}-1)^{n_1}}{(\frac{v+v^{-1}}{2}-1)^{n_2}} \frac{1-v^{-2}}{z+z^{-1}-v-v^{-1}} m_{a_1}(dz) dv\\
\end{multline}
\begin{multline}\label{ExtraIntegral'}
+\mathbf{1}_{(n_1,a_1)\trianglerighteq(n_2,a_2)}\Bigg(\frac{W^{(a_1,-1/2)}(s_1)}{\pi}\oint\mathsf{J}_{s_1}^{(a_1,-1/2)}\left(\frac{z+z^{-1}}{2}\right)\mathsf{J}_{s_2}^{(a_2,-1/2)}\left(\frac{z+z^{-1}}{2}\right)\\
\times\left(\frac{z+z^{-1}}{2}-1\right)^{n_1-n_2}m_{a_1}(dz)\Bigg),
\end{multline}
where the $z$-contour is the unit circle and the $v$-contour goes outside the unit circle.
Once again, there are four terms in \eqref{TheDoubleIntegral'}, corresponding to $z^{\pm s_1}v^{\pm s_2}$. First let us calculate the term corresponding to $z^{-s_1}v^{s_2}$. This term equals 
\begin{multline}\label{TheFirstTerm'}
\frac{W^{(a_1,-1/2)}(s_1)}{4\cdot 2\pi^2 i}\oint\oint\frac{E^{\omega}(\frac{z+z^{-1}}{2})}{E^{\omega}(\frac{v+v^{-1}}{2})} \frac{z^{-s_1}}{v^{-s_2}} \frac{(\frac{z+z^{-1}}{2}-1)^{n_1}}{(\frac{v+v^{-1}}{2}-1)^{n_2}}\\
\times\frac{1}{\frac{z+z^{-1}}{2}-\frac{v+v^{-1}}{2}}\frac{1-v^{-2}}{2}dv\frac{dz}{2 i z}.
\end{multline}

Let $z_{max}$ denote the largest real root of $R_{t,d,l}$ and deform the countours as shown in Figure~\ref{AuxPicture2}. Then \eqref{TheFirstTerm'} asymptotically evaluates to zero. Since $z_{max}>1$ by Proposition \ref{prove!}, these deformations can be made without picking up residues at $v=z^{-1}$. The residues at $v=z$ equal
\begin{equation}\label{Residues2}
\frac{1}{4\pi i}\oint z^{s_2-s_1-1}\left(\frac{z+z^{-1}}{2}-1\right)^{n_1-n_2}dz,
\end{equation}
where the contour crosses $(0,\infty)$.

\begin{figure}[htp]
\caption{On the left is $\Re(S(z)-S(z_{max}))$. Shaded regions indicate $\Re>0$ and white regions indicate $\Re<0$. The double zero occurs at $z_{max}$.}
\includegraphics[totalheight=0.30\textheight]{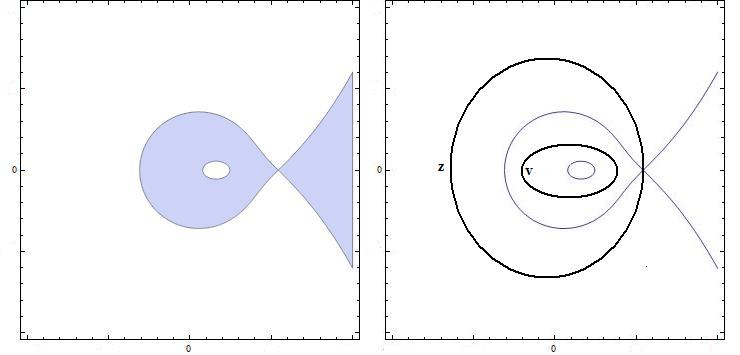}
\label{AuxPicture2}
\end{figure}

\begin{figure}[htp]
\caption{On the left is $\Re(S(z)-S(z_{min}))$. Shaded regions indicate $\Re>0$ and white regions indicate $\Re<0$. The double zero occurs at $z_{min}$.}
\includegraphics[totalheight=0.30\textheight]{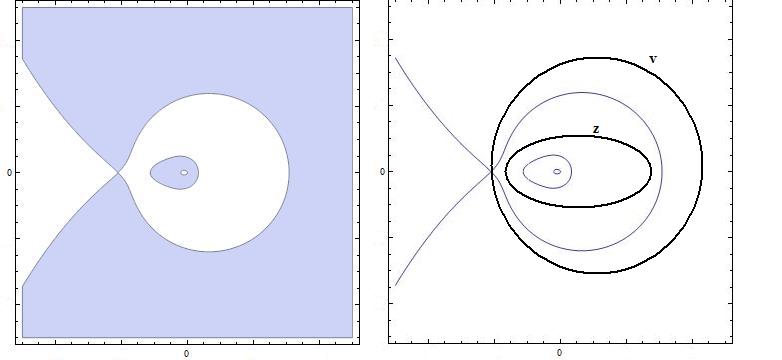}
\label{AuxPicture1}
\end{figure}

Similarly, as before, the term corresponding to $z^{s_1}v^{s_2}$ also equals \eqref{Residues2}, and the terms corresponding to $z^{\pm s_2}v^{-s_2}$ equal zero. Thus \eqref{TheDoubleIntegral'} and \eqref{ExtraIntegral'} asymptotically cancel out, so $K^{\gamma}(n_i,a_i,s_i;n_j,a_j,s_j)$ converges to $0$ when $(n_1,a_1)\trianglelefteq (n_2,a_2)$. Therefore the determinant equals $0$.

When $d/l \leq q_1(t,l)$, the argument is similar. Let $z_{min}$ be the smallest real root of $R_{t,d,l}$. Make the deformations in \eqref{TheDoubleIntegral'} as shown in Figure~\ref{AuxPicture1}. Since $z_{min}<-1$ by Proposition \ref{prove!}, these deformations can be made without picking up residues at $v=z^{-1}$. The integral does not pick up residues at $v=z$, so \eqref{TheDoubleIntegral'} converges to zero. Thus $\det[K^{\gamma}]_1^k$ converges to a triangular matrix. The diagonal entries are given by Lemma \ref{Neo}, which all evaluate to $1$. Therefore the determinant converges $1$.
\end{proof}

\subsection{Limit Shape}\label{limitshape}
Let $H:\R_{\geq 0}\times\mathfrak{Y}$ be the height function defined by
\[
H(\gamma,s,n)=\vert\{(y,m)\in\mathcal{L}_{\mathfrak{Y}} : m=n,y>s\}\vert,
\]
where $\mathcal{L}_{\mathfrak{Y}}$ is the random point configuration of $\mathcal{P}_{\mathfrak{Y}}^{\gamma}$. In other words, $H(\gamma,s,n)$ is the number of particles to the right of $(s,n)$ at time $\gamma$. Define $h$ to be
\begin{equation}\label{HeightDefinition}
h(t,d,l)=\displaystyle\lim_{N\rightarrow\infty}\frac{1}{N}\mathbb{E}H(tN,[xN],[lN])
\end{equation}

Recall that we defined $z_0=z_0(t,d,l)$ and $S(z)=S_{t,d,l}(z)$ in the previous section.
\begin{proposition} The pointwise limit \eqref{HeightDefinition} exists and
\begin{equation}\label{height}
h(t,d,l)=\Im\left(\frac{S(z_0)}{2\pi}\right).
\end{equation}
\end{proposition}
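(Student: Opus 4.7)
The plan is to express $h(t,d,l)$ as an integral of the limiting one-point density from Theorem~\ref{BulkLimits}, and then evaluate this integral by observing that the integrand is, up to a constant factor, the $d$-derivative of $\Im S(z_0)/(2\pi)$.

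I would first rewrite
\[
\frac{1}{N}\mathbb{E}\,H(tN,[dN],[lN])=\frac{1}{N}\sum_{y>[dN]}\rho_{1,\mathfrak{Y}}^{\gamma}(y,[lN])
\]
and use the identification $\iota:\mathfrak{X}\to\mathfrak{Y}$ to turn the sum into a Riemann sum for $\rho_{1,\mathfrak{X}}^{\gamma}$. Theorem~\ref{BulkLimits} with $k=1$ gives the pointwise limit of the integrand: $B(0,0;z_0(t,d',l))=\arg(z_0(t,d',l))/\pi$ in the liquid region $q_1<d'/l<q_2$, the value $1$ in the dense region $d'/l\le q_1$, and $0$ in the empty region $d'/l\ge q_2$. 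Since $0\le\rho_1\le 1$ uniformly and the integrand vanishes past $lq_2$, dominated convergence converts the Riemann sum into an integral
\[
h(t,d,l)=\int_d^{lq_2(t,l)}\rho_\infty(t,d',l)\,dd',
\]
the coordinate conversion between $\mathfrak{X}$ and $\mathfrak{Y}$ being absorbed into the normalization of $\rho_\infty$.

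The key computational input is the critical-point identity: since $z_0$ is a critical point of $S_{t,d,l}(z)$, so that $S'(z_0)=0$, the chain rule and $\partial_d S_{t,d,l}=-\log z$ yield
\[
\frac{\partial}{\partial d}\Im S_{t,d,l}(z_0(t,d,l))=\Im\bigl(\partial_{d}S_{t,d,l}(z_0)\bigr)+\Im\bigl(S'(z_0)\,\partial_{d}z_0\bigr)=-\arg z_0.
\]
So the liquid-region integrand is (up to the normalization) the negative $d$-derivative of $\Im S(z_0)/(2\pi)$. Boundary evaluations then complete the picture: at the liquid/empty edge $d'=lq_2(t,l)$, Proposition~\ref{prove!}(3) gives a positive real double root $z_0>1$, both logarithms are real and $\Im S(z_0)=0$; at the liquid/dense edge $d'=lq_1(t,l)$, Proposition~\ref{prove!}(4) gives $z_0<-1$, so both $\log z_0$ and $\log((z_0+z_0^{-1})/2-1)$ contribute $i\pi$, yielding $\Im S(z_0)=(l-d')\pi$, which is precisely what is needed to match the integrated dense-region contribution $lq_1-d$ (when $d<lq_1$). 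The fundamental theorem of calculus then collapses the two-piece integral to $\Im S(z_0(t,d,l))/(2\pi)$, uniformly in all three regimes.

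The main subtlety is the bookkeeping of the factor of $1/2$ arising from the $\mathfrak{X}\leftrightarrow\mathfrak{Y}$ coordinate change (particles in $\mathfrak{Y}$ occupy only every other lattice site, which halves the effective density), which is responsible for the $2\pi$ in the denominator rather than $\pi$; justifying the interchange of limit and Riemann sum in Step 1 is routine from the uniform bound $\rho_1\le 1$ together with the vanishing of the integrand beyond $lq_2$.
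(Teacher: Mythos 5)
Your proposal is essentially the same as the paper's proof: both express $h$ as $\frac{1}{2\pi}\int_d^\infty \arg z_0(t,x,l)\,dx$ via Theorem~\ref{BulkLimits} and dominated convergence (with the factor $1/2$ coming from particles in $\mathfrak{Y}$ occupying every other site), then exploit $\partial_d\Im S(z_0)=\Im(\partial_d S)(z_0)=-\arg z_0$ since $S'(z_0)=0$. The only cosmetic difference is that you split the integral at the frozen boundaries $lq_1,lq_2$ and evaluate $\Im S(z_0)$ there, while the paper simply observes that $\arg z_0/\pi$ equals the limiting density in all three regimes (it is $\pi$ in the dense phase since $z_0=z_{\min}<-1$, and $0$ in the empty phase since $z_0=z_{\max}>1$), integrates to $\infty$, and fixes the constant of integration by sending $d\to\infty$ — which is the same boundary information you use, packaged with less case analysis.
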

\begin{proof}
Note that
\[
\mathbb{E}H(\gamma,s,n)=\displaystyle\sum_{\begin{subarray}{c} y\geq s \\ y\equiv\ n+1\ \text{mod}\ 2 \end{subarray}} \rho_{1,\mathfrak{Y}}^{\gamma}(y,n),
\]
where $\rho_{1,\mathfrak{Y}}^{\gamma}$ is the first correlation function.
Using Theorem \ref{BulkLimits} and the dominated convergence theorem, we get
\[
\displaystyle\lim_{N\rightarrow\infty} \rho_{1,\mathfrak{Y}}^{tN}([dN],[lN])=\frac{1}{\pi}\arg z_0
\]
and
\[
h(t,d,l)=\displaystyle\lim_{N\rightarrow\infty}\frac{1}{N}\mathbb{E}H(tN,[dN],[lN])=\frac{1}{2\pi}\int_d^{\infty}\arg z_0(t,x,l) dx.
\]
Now take the partial derivative of the right hand side of \eqref{height} with respect to $d$. The result is
\[
\Im\left(\frac{1}{2\pi}\frac{\partial S(z_0)}{\partial d}\right)=-\frac{\arg z_0}{2\pi}+S'(z_0)\frac{\partial z_0}{\partial d}.
\]
Since $S'(z_0)=0$, we can conclude that
\[
\Im\left(\frac{S(z_0)}{2\pi}\right)=\frac{1}{2\pi}\int_d^{\infty}\arg z_0(t,x,l)dx + const.
\]
Evaluating both sides at $d=+\infty$ proves that $const=0$.
\end{proof}

\subsection{Discrete Jacobi Kernel}
For $-1< u < 1$, define the discrete Jacobi kernel
$L(n_1,a_1,s_1,n_2,a_2,s_2;u)$ on $\mathfrak{X}\times\mathfrak{X}$
as follows. If $(n_1,a_1)\trianglerighteq(n_2,a_2)$, then
\begin{multline*}
L(n_1,a_1,s_1,n_2,a_2,s_2;u)\\
=\frac{W^{(a_1,-1/2)}(s_1)}{\pi}\int_u^1 \mathsf{J}_{s_1}^{(a_1,-1/2)}(x)\mathsf{J}_{s_2}^{(a_2,-1/2)}(x)(x-1)^{n_1-n_2}(1-x)^{a_1}(1+x)^{-1/2}dx.
\end{multline*}
If $(n_1,a_1)\triangleleft(n_2,a_2)$, then
\begin{multline*}
L(n_1,a_1,s_1,n_2,a_2,s_2;u)\\
=-\frac{W^{(a_1,-1/2)}(s_1)}{\pi}\int_{-1}^u
\mathsf{J}_{s_1}^{(a_1,-1/2)}(x)\mathsf{J}_{s_2}^{(a_2,-1/2)}(x)(x-1)^{n_1-n_2}(1-x)^{a_1}(1+x)^{-1/2}dx.
\end{multline*}

For $n_1=n_2$ and $(a_1,a_2)=(-1/2,-1/2)$,  the integral can be
evaluated. Set $v=\cos^{-1}(u)$. Then
\[
L(n_1,-\tfrac 12,s_1,n_1,-\tfrac12,s_2;u)=(1-\tfrac 12
\delta_{s_1=0})\left(\frac{\sin v(s_1-s_2)}{\pi(s_1-s_2)} +
 \frac{\sin v(s_1+s_2)}{\pi(s_1+s_2)}\right).
\]
When $s_1=s_2$, the above expression is evaluated by L'H\^{o}pital's
rule. This can be viewed as a discrete analog of the Bessel kernel,
which arises at the hard edge in random matrix models, see
(1.2)--(1.3) from \cite{kn:TW} or (2.6) from \cite{kn:F}.

The discrete Jacobi kernel arises in the following limit.

\begin{theorem}\label{DiscreteJacobiKernel}
Let $\gamma$ depend on $N$ in such a way that $\gamma/N\rightarrow t>0$. Assume $t/l>1/2$. Let $s_1,\ldots,s_k$ be fixed finite constants. Let $n_1,\ldots,n_k$ depend on $N$ in such a way that
$n_i/N\rightarrow l$ and their differences $n_i-n_j$ are fixed finite constants. Then 
\begin{multline*}
\lim_{N\rightarrow\infty}\rho^{\gamma}_{k,\mathfrak{X}}(n_1,a_1,s_1;\ldots;n_k,a_k,s_k)\\=
\begin{cases}
\det[L(n_i,a_i,s_i,n_j,a_j,s_j;1-l/t)]_{i,j=1}^k, & \ \text{if}\ t/l>1/2,\\
1, & \ \text{if}\ t/l\leq 1/2.
\end{cases}
\end{multline*}
\end{theorem}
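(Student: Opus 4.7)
For the Plancherel characters the parameters are $\alpha_i=\beta_i=0$, so $E^{\omega}(x)=e^{\gamma(x-1)}$ and therefore $E^{\omega}(x)/E^{\omega}(u)=e^{\gamma(x-u)}$. Substituting this into the formula of Theorem \ref{theorem2}, the double integral portion of $K^{\omega}$ becomes
\[
\frac{W^{(a_1,-1/2)}(s_1)}{\pi}\frac{1}{2\pi i}\int_{-1}^{1}\oint_{C}e^{\gamma(x-u)}\mathsf{J}_{s_1}^{(a_1,-1/2)}(x)\mathsf{J}_{s_2}^{(a_2,-1/2)}(u)\frac{(x-1)^{n_1}}{(u-1)^{n_2}}\frac{(1-x)^{a_1}(1+x)^{-1/2}\,du\,dx}{x-u}.
\]
Since $s_1,s_2$ and the differences $n_i-n_j$ are fixed, the Jacobi factors and the ratio $(x-1)^{n_1-n_2}$ are $O(1)$; all the $N$-dependence sits in the action $S(u)=-\gamma u-n_2\log(u-1)$, whose critical point $u_*=1-n_2/\gamma$ tends to $x_*:=1-l/t$. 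The condition $t/l>1/2$ is equivalent to $x_*\in(-1,1)$, while $t/l\le 1/2$ corresponds to $x_*\le -1$.

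\textbf{The case $t/l>1/2$.} I will deform $C$ from a positively oriented loop around $[-1,1]$ to a new loop $\widetilde C$ that encircles only $[x_*,1]$ and passes vertically through $x_*$; this is the steepest-descent direction at the saddle, since $S''(x_*)=\gamma^2/n_2>0$. For each $x\in(-1,x_*)$ the pole of $(x-u)^{-1}$ at $u=x$ is crossed during the deformation; with $e^{\gamma(x-u)}\big|_{u=x}=1$, the residue evaluates to $-\mathsf{J}_{s_2}^{(a_2,-1/2)}(x)(x-1)^{n_1-n_2}$. Summing those residues produces exactly
\[
-\frac{W^{(a_1,-1/2)}(s_1)}{\pi}\int_{-1}^{x_*}\mathsf{J}_{s_1}^{(a_1,-1/2)}(x)\mathsf{J}_{s_2}^{(a_2,-1/2)}(x)(x-1)^{n_1-n_2}(1-x)^{a_1}(1+x)^{-1/2}\,dx,
\]
and a Laplace-type estimate bounds the remaining double integral over $[-1,1]\times\widetilde C$ by $o(1)$. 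When $(n_1,a_1)\trianglerighteq(n_2,a_2)$, adding this residue contribution to the single integral of Theorem \ref{theorem2} converts $\int_{-1}^{1}$ into $\int_{x_*}^{1}$, which is precisely $L(n_1,a_1,s_1,n_2,a_2,s_2;x_*)$; when $(n_1,a_1)\triangleleft(n_2,a_2)$ there is no single integral, and the residue contribution alone matches the second branch of the definition of $L$. Taking the $k\times k$ determinant yields the claimed formula.

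\textbf{The case $t/l\le 1/2$, and the main obstacle.} When $x_*\le -1$, an analogous saddle-point argument (modelled on the $d/l\le q_1(t,l)$ case in the proof of Theorem \ref{BulkLimits}) shows that the double integral is itself $o(1)$, with no residues being collected because $\widetilde C$ can still be chosen to enclose $[-1,1]$. Thus $K^{\omega}$ asymptotes to its single-integral piece, which is present only when $(n_1,a_1)\trianglerighteq(n_2,a_2)$. By orthogonality of $\mathsf{J}_{s}^{(a,-1/2)}$, this single integral equals $\delta_{s_1 s_2}$ when $(n_1,a_1)=(n_2,a_2)$ and is a bounded quantity between strictly different levels; ordering the particles by $\trianglerighteq$, the kernel matrix becomes triangular with identity blocks on the diagonal, so its determinant equals $1$. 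The main technical difficulty is the Laplace estimate for the residual double integral in the liquid-wall case: the factor $(x-u)^{-1}$ is singular near the saddle $x_*$, so $\widetilde C$ must be kept at positive distance from $[-1,1]$ outside a shrinking neighborhood of $x_*$; inside that neighborhood a local Gaussian analysis, in which the simple pole is absorbed by the quadratic decay at the saddle, provides the required $o(1)$ bound.
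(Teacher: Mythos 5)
Your proposal is correct and follows essentially the same steepest-descent argument as the paper: the critical point of the exponential weight is at $1-l/t$, deforming the $u$-contour across this saddle (when $t/l>1/2$) sweeps up the residues at $u=x$ for $x\in(-1,1-l/t)$, and combining these with the single-integral term of Theorem \ref{theorem2} reproduces the two branches of the discrete Jacobi kernel, while for $t/l\le 1/2$ no residues are collected and the kernel matrix becomes triangular with unit diagonal. One small imprecision: the claim that ``all the $N$-dependence sits in the action $S(u)$'' overlooks that the $x$-integrand carries the matching exponential $e^{\gamma x}(x-1)^{n_2}$; what actually controls the decay is the difference $\Re\bigl(A(x)-A(u)\bigr)$ with $A(z)=tz+l\log(1-z)$, which is how the paper phrases it, but this does not affect the correctness of your argument, and you rightly flag the $(x-u)^{-1}$ singularity near the saddle as the technical point requiring care in the $t/l>1/2$ case.
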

\begin{proof}
Let $A(z)=tz+l\cdot\log(1-z)$. The kernel $K^{\gamma}(n_1,a_1,s_1;n_1,a_2,s_2)$ equals
\begin{equation}\label{DoubleIntegral}
\frac{W^{(a_1,-1/2)}(s_1)}{2\pi^2 i}\oint\int_{-1}^1 \frac{e^{N(A(x)-A(1-l/t))}}{e^{N(A(u)-A(1-l/t))}} \mathsf{J}_{s_1}^{(a_1,-1/2)}(x)\mathsf{J}_{s_2}^{(a_2,-1/2)}(u)\frac{(1-x)^{a_1}(1+x)^{-1/2}}{x-u}dxdu
\end{equation}
\begin{equation}\label{SingleIntegral}
+\mathbf{1}_{(n_1,a_1)\trianglerighteq (n_2,a_2)}\left(\frac{W^{(a_1,-1/2)}(s_1)}{\pi}\int_{-1}^1 \mathsf{J}_{s_1}^{(a_1,-1/2)}(x)\mathsf{J}_{s_2}^{(a_2,-1/2)}(u)(x-1)^{n_1-n_2}(1-x)^{a_1}(1+x)^{-1/2}dxdu\right).
\end{equation}

\begin{figure}[htp]
\caption{On the left is $\Re(A(z)-A(1-l/t))$. Shaded regions indicate $\Re>0$ and white regions indicate $\Re<0$. The double zero occurs at $1-l/t$.}
\includegraphics[totalheight=0.30\textheight]{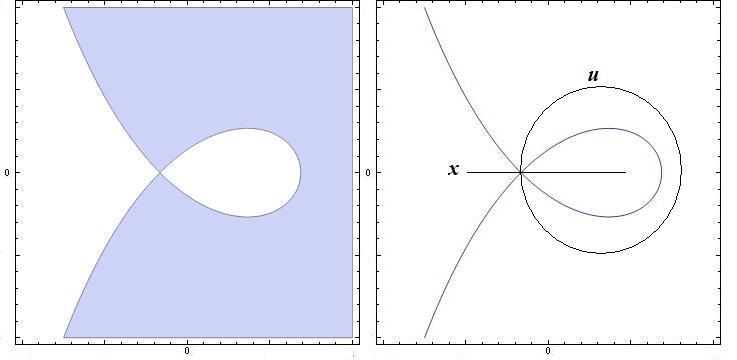}
\label{DoubleZero}
\end{figure}

Recall from Theorem~\ref{theorem2} that the $u$-contour is a positively oriented simple loop that encircles the interval $[-1,1]$. Now deform the $u$ contour as shown in Figure~\ref{DoubleZero}. With this deformation,
\[
\left|\frac{e^{N(A(x)-A(1-l/t))}}{e^{N(A(u)-A(1-l/t))}}\right|=\frac{e^{N\Re(A(x)-A(1-l/t))}}{e^{N\Re(A(u)-A(1-l/t))}}\rightarrow 0,
\]
so the integrand converges to zero. However, for $t/l>1/2$, the deformations cause the double integral to pick up residues at $u=x$. Thus, expression \eqref{DoubleIntegral} converges to
\begin{equation}\label{DoubleIntegral2}
-\frac{W^{(a_1,-1/2)}(s_1)}{\pi}\int_{-1}^{1-l/t} \mathsf{J}_{s_1}^{(a_1,-1/2)}(x)\mathsf{J}_{s_2}^{(a_2,-1/2)}(x)(x-1)^{n_1-n_2}(1-x)^{a_1}(1+x)^{-1/2}dx.
\end{equation}
Adding \eqref{SingleIntegral} to \eqref{DoubleIntegral2} shows that $K^{\gamma}$ converges to the discrete Jacobi kernel.

If $t/l\leq 1/2$, then the double integral does not pick up residues at $u=x$. Thus $\det[K^{\gamma}]$ converges to a triangular matrix. The diagonal entries are given by \eqref{SingleIntegral}, which all evaluate to $1$. Therefore the determinant equals $1$.
\end{proof}

\subsection{Symmetric Pearcey Kernel}
Define a kernel $\mathcal{K}$ on $\R_+\times\R$ as follows. In the expressions below, the $u$-contour is integrated on rays from $\infty e^{i\pi/4}$ to $0$ to $\infty e^{-i\pi/4}$. If $\eta_1\leq\eta_2$, then
\begin{multline}\label{GaussianLikeKernel}
\mathcal{K}(\sigma_1,\eta_1,\sigma_2,\eta_2)= \frac{2}{\pi^2 i} \int\int_0^{\infty}\exp(-\eta_1 x^2 + \eta_2 u^2 + u^4 - x^4)\\
\times  \cos(\sigma_1 x) \cos(\sigma_2 u) \frac{u}{u^2-x^2}dxdu.
\end{multline}
If $\eta_2<\eta_1$, then
\begin{multline}\label{GaussianLikeKernel2}
\mathcal{K}(\sigma_1,\eta_1,\sigma_2,\eta_2)=-\frac{1}{2\sqrt{\pi(\eta_1-\eta_2)}}\left(\exp\frac{(\sigma_1+\sigma_2)^2}{4(\eta_2-\eta_1)}+\exp\frac{(\sigma_1-\sigma_2)^2}{4(\eta_2-\eta_1)}\right)\\
+\frac{2}{\pi^2 i} \int\int_0^{\infty}\exp(-\eta_1 x^2 + \eta_2 u^2
+ u^4 - x^4) \cos(\sigma_1 x) \cos(\sigma_2 u)
\frac{u}{u^2-x^2}dxdu.
\end{multline}

This kernel arises as follows. Let $\rho_{k,\mathfrak{X}}^{\gamma,\Delta}$ denote the correlation function of $\mathcal{P}^{\gamma}_{\mathfrak{X},\Delta}$, which is the pushforward of $\mathcal{P}^{\gamma}_{\mathfrak{X}}$ under $\Delta$. See Appendix \ref{Appendix A}.

\begin{theorem}\label{SymmetricPearceyKernel}
For $1\leq i\leq k$, let $s_i$ depend on $N$ in such a way that $s_i/N^{1/4}\rightarrow 2^{-5/4}\sigma_i>0$ as $N\rightarrow\infty$.
Let $\gamma$ depend on $N$ in such a way that $\gamma/N\rightarrow 1/2$ as $N\rightarrow\infty$. Let $n_i$ depend on $N$ in such a way
that $(n_i-N)/\sqrt{N}\rightarrow 2^{-1/2}\eta_i$. Then there is the pointwise limit
\[
\lim_{N\rightarrow\infty}\left(\frac{N^{1/4}}{2^{5/4}}\right)^k
\rho^{\gamma,\Delta}_{k,\mathfrak{X}}(n_1,a_1,s_1;\ldots;n_k,a_k,s_k) =
\det[\mathcal{K}(\sigma_i,\eta_i,\sigma_j,\eta_j)]_{1\leq i,j\leq
k}.
\]
\end{theorem}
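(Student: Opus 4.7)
I would prove Theorem~\ref{SymmetricPearceyKernel} by applying steepest-descent asymptotics to the double contour integral representation of $K^{\omega}_{\Delta}$ given in Corollary~\ref{KDelta}, with the Plancherel specialization $\alpha_i = \beta_i = 0$ for which $E^{\omega}(x) = e^{\gamma(x-1)}$. The first step is to substitute $x = (z + z^{-1})/2$ and $u = (v + v^{-1})/2$, transforming the $x$-interval $[-1,1]$ into (one half of) the unit circle $|z|=1$ and the $u$-loop around $[-1,1]$ into a $v$-loop around the unit circle. By \eqref{ztothes}--\eqref{ztothes2} the Jacobi polynomials become explicit exponentials in $z^{\pm s_j}$, $v^{\pm s_j}$, and the remaining exponential and power factors reorganize as $\exp(G_{n_1,\gamma}(z) - G_{n_2,\gamma}(v))$ with
\[
G_{n,\gamma}(z) \;=\; \tfrac{\gamma}{2}(z+z^{-1}) + 2n\log(z-1) - n\log(2z).
\]

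A direct computation gives
\[
G_{n,\gamma}'(z) \;=\; \frac{(z+1)\bigl[\gamma(z-1)^2 + 2nz\bigr]}{2z^2(z-1)},
\]
so that at $n=2\gamma$ the bracket becomes $\gamma(z+1)^2$ and $G_{n,\gamma}'$ acquires a \emph{triple} zero at $z=-1$. In our scaling $\gamma = N/2$, $n_j = N + \eta_j\sqrt{N/2} + o(\sqrt N)$, we are precisely in this degenerate-saddle regime, and a Taylor expansion gives
\[
G_{n_j,\gamma}(z) - G_{n_j,\gamma}(-1) \;=\; -\tfrac{\gamma}{16}(z+1)^4 + (n_j - 2\gamma)\tfrac{(z+1)^2}{4} + o(1)
\]
uniformly for $z+1 = O(N^{-1/4})$. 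Setting $z+1 = 2^{5/4}\zeta\,N^{-1/4}$ and $v+1 = 2^{5/4}\xi\,N^{-1/4}$, the constant $2^{5/4}$ makes the leading quartic term exactly $\mp\zeta^4, \mp\xi^4$ and the quadratic perturbation exactly $\pm\eta_j\zeta^2, \pm\eta_j\xi^2$, with the signs dictated by whether the integrand involves $G$ (for the $z$-variable) or $-G$ (for the $v$-variable). The matched scaling $s_j = 2^{-5/4}\sigma_j N^{1/4}$ makes the Jacobi factors converge, via \eqref{ztothes}--\eqref{ztothes2}, to $\cos$-expressions in $\sigma_j\zeta$ and $\sigma_j\xi$, modulo a sign $(-1)^{s_j}$ which is absorbed into a row/column conjugation of the $k\times k$ kernel matrix and so drops out of the determinant. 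The $z$-contour becomes the positive imaginary $\zeta$-axis, and the $v$-contour is deformed so that near $v = -1$ it follows steepest-descent rays going into the left half-plane, avoiding $[-1,1]$; after suitable $\pm i$-rotations identifying the Pearcey variables with rotates of $\zeta$ and $\xi$, the double integral converges to the first term in the definition of $\mathcal K$ from \eqref{GaussianLikeKernel}. The normalization $(N^{1/4}/2^{5/4})^k$ on the left-hand side of the theorem arises from the Jacobian of the rescaling combined with the $(1+x)^{-1/2}$ measure and the $W^{(a_j,-1/2)}/\pi$ prefactors.

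The single-integral term of $K^{\omega}_{\Delta}$, present when $(n_2,a_2) \triangleleft (n_1,a_1)$, produces the Gaussian-delta contribution of \eqref{GaussianLikeKernel2} in the regime $\eta_1 > \eta_2$. After cancelling the $n=2\gamma$ part, its effective exponent reduces to $(n_1 - n_2)H(z)$ with $H(z) := 2\log(z-1)-\log(2z)$, which has a non-degenerate critical point at $z=-1$ with $H''(-1) = 1/2$. Laplace's method at the same scale $z+1 = O(N^{-1/4})$ yields a Gaussian in $\zeta$ with variance proportional to $1/(\eta_1 - \eta_2)$, and the identity $\cos(\sigma_1\zeta)\cos(\sigma_2\zeta) = \tfrac 12[\cos((\sigma_1+\sigma_2)\zeta) + \cos((\sigma_1-\sigma_2)\zeta)]$ produces the two exponentials in \eqref{GaussianLikeKernel2} together with the prefactor $1/(2\sqrt{\pi(\eta_1-\eta_2)})$.

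The main obstacle is the steepest-descent analysis of the double integral near the degenerate triple critical point. One must (i) deform the $v$-contour through $v=-1$ along steepest-descent rays while ensuring that $\mathrm{Re}(G_{n_2,\gamma}(v) - G_{n_2,\gamma}(-1)) > 0$ off a small neighbourhood of $-1$, (ii) justify dominated convergence inside the $k$-fold determinant through uniform tail estimates on both contours, and (iii) correctly track signs, orientations, and branch choices (for $\log(z-1)$ and the phases $(-1)^{s_j}$) so as to match the precise form of $\mathcal K$ stated in the theorem. The analogous contour manipulation in the proof of Theorem~\ref{BulkLimits} serves as the template for these steps.
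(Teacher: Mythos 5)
Your proposal is correct and takes essentially the paper's approach; the only real divergence is a cosmetic reparametrization of the saddle-point analysis. The paper's proof of Theorem~\ref{SymmetricPearceyKernel} works directly in the Jacobi argument $x$: the phase $A(x)=x/2+\log(1-x)$ has an ordinary quadratic saddle at $x=-1$, one first rescales $x'=N^{1/2}(x+1)$, and the Pearcey quartic only appears after the subsequent substitution $x'=2^{3/2}\tilde x^2$ (similarly $u'=2^{3/2}\tilde u^2$). You instead substitute $x=(z+z^{-1})/2$, $u=(v+v^{-1})/2$ from the outset, so that the phase $G_{n,\gamma}$ exhibits a genuine triple zero of $G'$ at $z=-1$ when $n=2\gamma$, and the single scaling $z+1=2^{5/4}\zeta N^{-1/4}$ produces $-\zeta^4+\eta\zeta^2$ directly. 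Since $x+1=(z+1)^2/(2z)$, your $\zeta$ is (up to a $\pm i$ rotation) the paper's $\tilde x$, and the two parametrizations coincide; your expansion coefficients $-\gamma/16$ and $(n-2\gamma)/4$, the scale $2^{5/4}$, and the cosine limit $(-1)^s\mathsf{J}_s^{(\pm1/2,-1/2)}\to\cos(\sigma\cdot)$ all agree with what the paper obtains. Your treatment of the single-integral term — exponent $(n_1-n_2)H(z)$ with $H''(-1)=1/2$ leading, via the product-to-sum identity for cosines, to the two Gaussian exponentials with the $\mathbf 1_{\eta_1>\eta_2}$ indicator — likewise matches the paper's computation. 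Two minor stylistic points: the phrase ``cancelling the $n=2\gamma$ part'' for the single integral is confusing since that term carries no $\gamma$-dependence; and the convergence of the rescaled single integral is not a Laplace asymptotic in the strict sense (the exponent stays $O(1)$), but rather a dominated-convergence limit requiring $\eta_1>\eta_2$ for integrability — which is exactly why the indicator appears. As you note, making the argument rigorous requires the steepest-descent contour geometry and uniform tail bounds, which is precisely what the paper handles through its Figure~\ref{DoubleZero}-style deformations and the auxiliary relations \eqref{ztothes}--\eqref{ztothes2}.
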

\begin{proof}
From Corollary \ref{KDelta}, the left hand side of the above equation is equal to $\det[(N^{1/4}/2^{5/4})K^{\omega}_{\Delta}(n_i,a_i,s_i,n_j,a_j,s_j)]_{i,j=1}^k$ and $(N^{1/4}/2^{5/4})K^{\omega}_{\Delta}(n_1,a_1,s_1,n_2,a_2,s_2)$ equals
\begin{multline}\label{Limits1}
\frac{N^{1/4}}{2^{5/4}\cdot 2\pi i}\frac{W^{(a_1,-1/2)}}{\pi}\oint_{\vert u\vert=1}\int_{-1}^1 \frac{e^{\gamma x}}{e^{\gamma u}}\mathsf{J}_{s_1}^{(a_1,-1/2)}(x)\mathsf{J}_{s_2}^{(a_2,-1/2)}(u)\\
\times\frac{(x-1)^{n_1}}{(u-1)^{n_2}} \frac{(1-x)^{a_1}(1+x)^{-1/2}}{u-x}dxdu
\end{multline}
\begin{multline}\label{Limits2}
-\mathbf{1}_{(n_1,a_1)\triangleright (n_2,a_2)}\bigg(\frac{N^{1/2}}{2^{5/4}}\frac{W^{(a_1,-1/2)}(s_1)}{\pi}\int_{-1}^1\mathsf{J}_{s_1}^{(a_1,-1/2)}(x)\mathsf{J}_{s_2}^{(a_2,-1/2)}(x)\\
(x-1)^{n_1-n_2}(1-x)^{a_1}(1+x)^{-1/2}dx\bigg).
\end{multline}

Deform the contours as shown in Figure~\ref{DoubleZero}, with the double critical point at $-1$. Then, asymptotically, nonvanishing contributions to \eqref{Limits1} and \eqref{Limits2} come from near $-1$. This justifies the substitutions $x'=N^{1/2}(x+1)$ and $u'=N^{1/2}(u+1)$. For large $N$, $x'$ is integrated from $0$ to $\infty$ and $u'$ is integrated from $i\infty$ to $-i\infty$. There are also the following asymptotic relations:
\begin{eqnarray}
(1-x)^{a_1}(1+x)^{-1/2}& \approx & 2^{a_1}N^{1/4}(x')^{-1/2},\\
\frac{dxdu}{u-x}& \approx & N^{-1/2}\frac{du'dx'}{u'-x'}.
\end{eqnarray}

Let us show that if $s/N^{1/4}\rightarrow 2^{-5/4}\sigma$, then
\begin{equation}
(-1)^s \mathsf{J}_s^{(\pm 1/2,-1/2)}(x)\rightarrow\cos(2^{-3/4}\sigma(x')^{1/2})
\end{equation}
For $a_1=-1/2$,
\[
\mathsf{J}_s^{(-1/2,-1/2)}(x)=\cos(s\theta),\ \ x=\cos\theta.
\]
Hence,
\begin{eqnarray*}
& &(-1)^s \mathsf{J}_s^{(-1/2,-1/2)}(x)=(-1)^s\cdot\cos(s\cdot\cos^{-1}(-1+N^{-1/2}x'))\\
&\approx &(-1)^s\cdot\cos(s\pi-2^{-3/4}\sigma(x')^{1/2}+o(1))\\
&\approx & (-1)^s(\cos(s\pi)\cos(2^{-3/4}\sigma(x')^{1/2})+\sin(s\pi)\sin(2^{-3/4}\sigma(x')^{1/2}))\\
&=& \cos(2^{-3/4}\sigma(x')^{1/2})
\end{eqnarray*}
Similarly, for $a=1/2$,
\begin{eqnarray*}
& &(-1)^s \mathsf{J}_s^{(1/2,-1/2)}(x) = (-1)^s\cdot\frac{\sin((s+1/2)\theta)}{\sin(\theta/2)} \\
&\approx &  (-1)^s\cdot \frac{\sin((s+1/2)\pi-2^{-3/4}\sigma(x')^{1/2}+o(1))}{\sqrt{(2+o(1))/2}}\\
&\approx & (-1)^s(\sin((s+\tfrac{1}{2})\pi)\cos(2^{-3/4}\sigma(x')^{1/2})- \cos((s+\tfrac{1}{2})\pi)\sin(2^{-3/4}\sigma(x')^{1/2}))\\
&=& \cos(2^{-3/4}\sigma(x')^{1/2})
\end{eqnarray*}
For $a_1=\pm 1/2$ and $s>0$,
\begin{equation}
\frac{W^{(a_1,-1/2)}(s)}{\pi}=\frac{2^{-a_1}\sqrt{2}}{\pi}
\end{equation}

Let $A(z)=z/2+\log(1-z)$. We have
\[
A(z)-A(-1)=-\frac{1}{8}(z+1)^2+O((z+1)^3),
\]
and
\begin{multline}\label{Limits3}
(-2)^{n_2-n_1}\exp(\gamma(x-u))\frac{(x-1)^{n_1}}{(u-1)^{n_2}}\approx\frac{e^{N(A(x)-A(-1))+2^{-1/2}\eta_1\sqrt{N}(-\log 2+\log(1-x))}}{e^{N(A(u)-A(-1))+2^{-1/2}\eta_2\sqrt{N}(-\log 2+\log(1-x))}}\\
\rightarrow \frac{e^{-(x')^2/8-\eta_1x'/2^{3/2}}}{e^{-(u')^2/8-\eta_2u'/2^{3/2}}}.
\end{multline}
The kernel can be multiplied by the conjugating factor $(-1)^{s_1-s_2}(-2)^{n_2-n_1}$ without changing the determinant. Combining $\eqref{Limits1}-\eqref{Limits3}$ shows that
\begin{eqnarray*}
& &(-1)^{s_1-s_2}(-2)^{n_2-n_1}\eqref{Limits1}\\
&=&\frac{N^{1/4}}{2^{5/4}}\frac{(-1)^{s_1+s_2}(-2)^{n_2-n_1}}{2\pi i}\frac{W^{(a_1,-1/2)}}{\pi}\oint_{\vert u\vert=1}\int_{-1}^1 \frac{e^{\gamma x}}{e^{\gamma u}}\mathsf{J}_{s_1}^{(a_1,-1/2)}(x)\mathsf{J}_{s_2}^{(a_2,-1/2)}(u)\\
& &\ \ \times \frac{(x-1)^N}{(u-1)^N} \frac{(1-x)^{a_1}(1+x)^{-1/2}}{u-x}dxdu\\
&\rightarrow & \frac{2^{1/4}}{4\pi^2 i} \int_{i\infty}^{-i\infty}\int_0^{\infty}\exp\left(-\frac{1}{2^{3/2}}\eta_1 x' +\frac{1}{2^{3/2}}\eta_2 u' + \frac{1}{8}(u'^2-x'^2)\right) \cos(2^{-3/4}\sigma_1x'^{1/2})\\
& &\ \ \times \cos(2^{-3/4}\sigma_2u'^{1/2}) \frac{x'^{-1/2}}{u'-x'}dx'du'\\
&=& \eqref{GaussianLikeKernel}.
\end{eqnarray*}
In the last equality, the substitutions $u'=2^{3/2}\tilde{u}^2$ and $x'=2^{3/2}\tilde{x}^2$ were needed. We need one final additional calculation:
\[
(-2)^{n_2-n_1}(x-1)^{n_1-n_2}\approx (1-\tfrac{1}{2}N^{-1/2}x')^{(\eta_1-\eta_2)2^{-1/2}\sqrt{N}}\rightarrow \exp(\tfrac{1}{2^{3/2}}(\eta_2-\eta_1)x'),
\]
which shows that
\begin{multline*}
(-1)^{s_1-s_2}(-2)^{n_2-n_1}\eqref{Limits2}\rightarrow\\
\mathbf{1}_{\eta_1>\eta_2}\left(-\frac{2^{1/4}}{2\pi}\int_0^{\infty}\exp\left(\frac{1}{2^{3/2}}(\eta_2-\eta_1)x\right)\cos(2^{-3/4}\sigma_1x^{1/2})\cos(2^{-3/4}\sigma_2x^{1/2})x^{-1/2}dx\right)\\
=-\frac{\mathbf{1}_{\eta_1>\eta_2}}{2\sqrt{\pi(\eta_1-\eta_2)}}\left(\exp\frac{(\sigma_1+\sigma_2)^2}{4(\eta_2-\eta_1)}+\exp\frac{(\sigma_1-\sigma_2)^2}{4(\eta_2-\eta_1)}\right)
\end{multline*}
Therefore $(-1)^{s_1-s_2}(-2)^{n_2-n_1}(\eqref{Limits1}+\eqref{Limits2})\rightarrow\eqref{GaussianLikeKernel2}$.
\end{proof}

\appendix
\section{Generalities on Random Point Processes.}\label{Appendix A}

Let $\X$ be a locally compact separable topological space. A {\it
point configuration\/} $X$ in $\X$ is a locally finite collection of
points of the space $\X$. For our purposes it suffices to assume
that the points of $X$ are always pairwise distinct. Denote by
$\Conf(\X)$ the set of all point configurations in $\X$.

A relatively compact Borel subset $A\subset \X$ is called {\it a
window}. For a window $A$ and $X\in\Conf(\X)$, set $N_A(X)=|A\cap
X|$ (number of points of $X$ in the window). Thus, $N_A$ is a
function on $\Conf(\X)$. $\Conf(\X)$ is equipped with the Borel
structure generated by functions $N_A$ for all windows $A$.

A {\it random point process\/} on $\X$ is a probability measure on
$\Conf(\X)$. One often uses the term {\it particles\/} for the
elements of a random point configuration.

Given a random point process on $\X$, one can usually define a
sequence $\{\rho_n\}_{n=1}^\infty$, where $\rho_n$ is a symmetric
measure on $\X^n$ called the $n$th {\it correlation measure}. Under
mild conditions on the point process, the correlation measures exist
and determine the process uniquely.

The correlation measures are characterized by the following
property: For any $n\ge1$ and a compactly supported bounded Borel
function $f$ on $\X^n$ one has
$$
\int_{\X^n}f\rho_n=\left\langle \sum_{x_{i_1},\dots,x_{i_n}\in
X}f(x_{i_1},\dots,x_{i_n})\right \rangle_{X\in\Conf(\X)}
$$
where $\langle\,\cdot\,\rangle$ denotes averaging with respect to
our point process, and the sum on the right is taken over all
$n$-tuples of pairwise distinct points of the random point
configuration $X$.

Often one has a natural measure $\mu$ on $\X$ (called {\it reference
measure\/}) such that the correlation measures have densities with
respect to $\mu^{\otimes n}$, $n=1,2,...\,$. Then the density of
$\rho_n$ is called the $n$th {\it correlation function\/} and it is
usually denoted by the same symbol $\rho_n$.

The first correlation function $\rho_1$ is often called the {\it
density function\/} as it measures the average density of particles.

For point processes on a finite or countable discrete space $\X$ it
is natural to choose the counting measure as the reference measure
$\mu$, and then there is a simpler way to define the correlation
functions: For any $n=1,2,\dots$ and any pairwise distinct
$x_1,\dots,x_n\in\X$,
$$
\rho_n(x_1,\dots,x_n)=\Prob\{X\in \Conf(\X)\mid X\supset
\{x_1,\dots,x_n\}\}.
$$

If $\X$ is discrete, a random point process on $\X$ is always
uniquely determined by its correlation functions.

The reader can find more information on random point processes in
\cite{kn:DVJ}.

A point process on $\X$ is called {\it determinantal\/} if there
exists a function $K(x,y)$ on $\X\times\X$ such that the correlation
functions (with respect to some reference measure) are given by the
determinantal formula
$$
\rho_n(x_1,\dots,x_n)={\det[K(x_i,x_j)]}_{i,j=1}^n
$$
for all $n=1,2,\dots$. The function $K$ is called the {\it
correlation kernel\/}.

Note that the correlation kernel is not defined uniquely: $K(x,y)$
and $\frac{f(x)}{f(y)}K(x,y)$ define the same correlation functions
for an arbitrary nonzero function $f$ on $\X$.

Assume that $\X$ is discrete. Define a map $\Delta$ by
$$
\Delta: \Conf(\X)\rightarrow \Conf(\X), \ \ X\mapsto \X\backslash X.
$$
Given a point process $\mathcal{P}$ on $\X$, its pushforward under
$\Delta$ is also a point process on $\X$; denote it by $\mathcal{P}_{\Delta}$. The map $\Delta$ is often referred to as {\it
particle-hole involution\/}, because the particles of $\mathcal{P}_{\Delta}$ are located exactly at those points of $\X$ where there
are no particles of $\mathcal{P}$.  With this notation, we have the
following proposition.

\begin{Proposition 0}\label{Proposition in Appendix A} If $\mathcal{P}$ is a determinantal point
process with correlation kernel $K(x,y)$, then $\mathcal{P}_{\Delta}$
is also a determinantal point process with correlation kernel
$$
K_\Delta(x,y)=\delta_{x,y}-K(x,y).
$$
\end{Proposition 0}

The proof is an application of the inclusion-exclusion principle,
see Proposition A.8 of \cite{kn:BOO}.

\frenchspacing
\bibliographystyle{plain}

\end{document}